\newtheorem{theoremcounter}{Theorem Counter}[section]
\theoremstyle{remark}
\theoremstyle{definition}
\newtheorem{definition}[theoremcounter]{Definition}
\theoremstyle{plain}
\newtheorem{lemma}[theoremcounter]{Lemma}
\newtheorem{proposition}[theoremcounter]{Proposition}
\newtheorem{corollary}[theoremcounter]{Corollary}
\newtheorem{theorem}[theoremcounter]{Theorem}
\numberwithin{equation}{section}
\newcommand{\Z}{\mathbb{Z}}
\newcommand{\Q}{\mathbb{Q}}
\newcommand{\R}{\mathbb{R}}
\newcommand{\C}{\mathbb{C}}
\newcommand{\calC}{\mathcal{C}}
\newcommand{\calD}{\mathcal{D}}
\newcommand{\calE}{\mathcal{E}}
\newcommand{\ep}{\varepsilon}
\newcommand{\dd}{\mathrm{d}}
\newcommand{\bbH}{\mathbb{H}}
\DeclareMathOperator{\ImNew}{Im}
\renewcommand{\Im}{\ImNew}
\DeclareMathOperator{\KW}{KW}
\newcommand{\pmat}[1]{\begin{pmatrix}#1\end{pmatrix}}
\newcommand{\smat}[1]{\bigl(\begin{smallmatrix}#1\end{smallmatrix}\bigr)}
\def\M{\mathrm{M}}
\def\gl{\mathfrak{gl}}
\def\sl{\mathfrak{sl}}
\def\spo{\mathfrak{spo}}
\def\sp{\mathfrak{sp}}
\def\so{\mathfrak{so}}
\def\t{{}^t\hspace{-.2em}}
\def\st{{}^{st}}
\def\largest{{\begin{matrix}\phantom{a} \\ \phantom{a}\end{matrix}}^{st\hspace{-.4em}}}
\def\1{\mathbbm{1}}
\DeclareMathOperator{\diag}{diag} 
\DeclareMathOperator{\tr}{tr}
\DeclareMathOperator{\str}{str}
\DeclareMathOperator{\Aut}{Aut}
\DeclareMathOperator{\sgn}{sgn}
\newcommand{\fg}{\mathfrak{g}}
\newcommand{\fh}{\mathfrak{h}}
\newcommand{\fq}{\mathfrak{q}}
\newcommand{\fS}{\mathfrak{S}}
\newcommand{\bP}{\mathbb{P}}
\begin{document}

\title[]{Denominator identity for the affine Lie superalgebra $\widehat{\mathfrak{spo}}(2m,2m+1)$ and indefinite theta functions}

\author{Toshiki Matsusaka}
\address{Faculty of Mathematics, Kyushu University, Motooka 744, Nishi-ku, Fukuoka 819-0395, Japan}
\email{matsusaka@math.kyushu-u.ac.jp}

\author[]{Miyu Suzuki}
\address{Department of Mathematics, Kyoto University, Kitashirakawa Oiwake-cho, Sakyo-ku,  Kyoto 606-8502, Japan}
\email{suzuki.miyu.4c@kyoto-u.ac.jp}


\subjclass[2020]{11F27,  17B10}



\maketitle

\begin{abstract}
In 1994,  Kac and Wakimoto found the denominator identity for classical affine Lie superalgebras,  generalizing that for affine Lie algebras.
As an application,  they obtained power series identities for some powers of $\triangle(q)$,  where $\triangle(q)$ is the generating function of triangular numbers.
In this article,  we give a different proof of one of their identities.
The main step is to prove that a certain indefinite theta function involving spherical polynomials is a modular form. 
We use the technique recently developed by Roehrig and Zwegers. 
\end{abstract}


\section{Introduction}

The \emph{denominator identity} of a finite-dimensional semisimple Lie algebra $\fg$ is an identity of the following form:
    \begin{equation}\label{intro1}
    \prod_{\alpha\in\Phi^+}(1-e^{-\alpha}) 
    = \sum_{w\in W}\epsilon(w)e^{w(\rho)-\rho}.
    \end{equation}
Here,  the product on the left-hand side is over the positive roots of $\fg$,  the sum in the right-hand side is over the Weyl group,  $\epsilon$ is the sign character,  and $\rho$ is the Weyl vector.
The equation is understood in a certain ring of Laurent polynomials in $e^{-\alpha}$'s.
In particular,  both sides of \eqref{intro1} are finite.
For example,  when $\fg=\sl(n)$,  the product becomes the Vandermonde polynomial and the sum becomes the determinant of the Vandermonde matrix.

The denominator identity is related to the representation theory of Lie algebras. 
It follows from the Weyl character formula for highest weight modules of $\fg$,   applied to the trivial representation.
The left-hand side of \eqref{intro1} appears in the denominator of the character formula and is called the \emph{denominator} of $\fg$.

In the 1970's, Kac and Moody established the structure and representation theory of a certain class of infinite-dimensional Lie algebras $\fg$,  which are called the Kac--Moody Lie algebras.
They extended the Weyl character formula to integrable highest weight modules when $\fg$ is symmetrizable.
Similarly as in the finite-dimensional case,  the denominator identity for $\fg$ is obtained.
It is of exactly the same form as \eqref{intro1} except that both the product and the sum become infinite.
For details,  see the original articles of Moody~\cite{Moody},  Kac~\cite{Kac2},  and Chapter 12 of the book of Kac~\cite{Kac}.

A typical example of a Kac--Moody Lie algebra is an affine Lie algebra.
When $\fg$ is an affine Lie algebra,  the denominator identity is called the Macdonald identity.
This is because it was first established by Macdonald \cite{Macdonald} without using Lie algebras.
Substituting appropriate variables to $e^{-\alpha}$'s and $e^{-\rho}$,  one can deduce from the Macdonald identity a large number of famous identities including the Jacobi triple product identity and the Watson quintuple product identity.

A Lie superalgebra is a vast generalization of a Lie algebra. 
It stems from the theory of supersymmetry in mathematical physics and has been studied with representation theoretic interest. 
In 1994,  Kac and Wakimoto \cite{KacWakimoto1994} formulated the following denominator identity for a classical affine Lie superalgebra $\fg$.
The proof was completed by Gorelik \cite{Gorelik11} in 2011:
    \begin{equation}\label{intro2}
    \frac{\prod_{\alpha\in\widehat{\Phi}_0^+}(1-e^{-\alpha})}
    {\prod_{\alpha\in\widehat{\Phi}_1^+}(1+e^{-\alpha})}
    = \sum_{w\in\widehat{W}^\sharp}\epsilon(w)\frac{e^{w(\widehat{\rho})-\widehat{\rho}}}
    {\prod_{\beta\in S}(1+e^{-w(\beta)})}.
    \end{equation}
The left-hand side is a quotient of two infinite products and the right-hand side is an infinite sum.
The details of this formula are exlpained in \cref{Affine Lie superalgebra} in the case $\fg=\widehat{\spo}(2m,  2m+1)$.

The subject of Kac--Wakimoto \cite{KacWakimoto1994} is to deduce some identities of $q$-series from \eqref{intro2},  which are of interest in combinatorics and number theory.
Let $\triangle(q)$ be the generating function of triangular numbers, \emph{i.e.}
    \[
    \triangle(q) \coloneqq \sum_{n=0}^\infty q^{\frac{n(n+1)}{2}}.
    \]
For $N,  r\in\Z_{>0}$,  the coefficient of $q^N$ in $\triangle(q)^r$ equals the number of representations of $N$ as a sum of $r$ triangular numbers.
Kac--Wakimoto especially studied the case of $\fg=\widehat{\sl}(m+1,  m)$,  $\widehat{\spo}(2m,  2m+1)$,  and $\widehat{\fq}(m)$ to obtain power series expansions of $\triangle(q)^r$.

When $\fg=\widehat{\spo}(2m,  2m+1)$,  the resulting identity is as follows.
\begin{theorem}[{Kac--Wakimoto~\cite{KacWakimoto1994}}]\label{thm:Kac-Wakimoto-Conj}
For a positive integer $m$,  we have
    \begin{align}\label{intro3}
    \begin{split}
	\triangle(q)^{2m(2m+1)} 
	&= \frac{1}{m! \left(\prod_{j=1}^m (2j-1)!\right)^2} 
	\sum_{\substack{k_1, \dots, k_m \ge 0 \\ r_1, \dots, r_m \ge 0}} 
	(-1)^{k_1+\cdots + k_m}  \\  
	&\quad \times \prod_{1 \le i < j \le m} (r_i + k_i - r_j - k_j)(r_i - r_j)
	(2+r_i+k_i+r_j+k_j)(1+r_i+r_j)  \\  
	&\quad \times \prod_{j=1}^m (1+2r_j)(1+r_j+k_j) 
	q^{\sum_{j=1}^m (k_j^2 + 2k_j r_j + 2k_j + r_j) - \frac{m(m-1)}{2}}.
	\end{split}
    \end{align}
\end{theorem}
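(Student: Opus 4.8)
The plan is to read the right-hand side of \eqref{intro3} as an indefinite theta function attached to a quadratic form of signature $(m,m)$ and a spherical polynomial, to prove that this theta function is a holomorphic modular form by the technique recently developed by Roehrig and Zwegers, and then to match it with the classical modular form $\triangle(q)^{2m(2m+1)}$.

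First I would rewrite the right-hand side of \eqref{intro3} in theta-series form. With $q=e^{2\pi i\tau}$, substitute $s_j=k_j+r_j$, then $\tilde s_j=s_j+1$ and $\tilde r_j=r_j+\tfrac12$; a short computation shows that the exponent of $q$ becomes $\sum_{j=1}^m(\tilde s_j^2-\tilde r_j^2)-\tfrac{m(2m+1)}{4}$, so that up to the factor $q^{-m(2m+1)/4}$ the exponent is the value of the diagonal quadratic form $Q(\bm x)=\sum_{j=1}^m(x_{2j-1}^2-x_{2j}^2)$, of signature $(m,m)$, at $\tilde{\bm x}=(\tilde s_1,\tilde r_1,\dots,\tilde s_m,\tilde r_m)$. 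The vector $\tilde{\bm x}$ ranges over a fixed coset $\Z^{2m}+\bm\mu$ intersected with the cone $\mathcal C=\{\tilde s_j\ge\tilde r_j+\tfrac12\ge 1\}$, which is a product of two-dimensional cones, each spanned by a ray on which $Q$ is positive and a ray on which $Q$ vanishes. In the same variables the polynomial prefactor of \eqref{intro3} factors, up to the constant $2^m$, as $\Delta_{B_m}(\tilde s)\,\Delta_{B_m}(\tilde r)$, where $\Delta_{B_m}(y)=\prod_{i<j}(y_i^2-y_j^2)\prod_i y_i$ is the Weyl denominator of type $B_m$; this reflects the even part $C_m\oplus B_m$ of $\spo(2m,2m+1)$ described in \cref{Affine Lie superalgebra}. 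The polynomial $\Delta_{B_m}(\tilde s)\Delta_{B_m}(\tilde r)$ is homogeneous of degree $2m^2$, and since each factor is annihilated by the Euclidean Laplacian in its own variables, the product is harmonic for the Laplacian $\Delta_Q=\sum_j\partial_{x_{2j-1}}^2-\sum_j\partial_{x_{2j}}^2$ of $Q$; that is, it is a spherical polynomial for $Q$. Thus \eqref{intro3} is equivalent to an identity $q^{m(2m+1)/4}\triangle(q)^{2m(2m+1)}=\kappa_m\,\vartheta(\tau)$ for an explicit constant $\kappa_m$, where
\[
\vartheta(\tau)=\sum_{\tilde{\bm x}\in(\Z^{2m}+\bm\mu)\cap\mathcal C}(-1)^{\sum_j k_j}\,\Delta_{B_m}(\tilde s)\,\Delta_{B_m}(\tilde r)\,q^{Q(\tilde{\bm x})}
\]
is an indefinite theta function of Zwegers type carrying a spherical polynomial insertion and a half-integral twist.

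The main step, which I also expect to be the principal obstacle, is to prove that $\vartheta$ is a holomorphic modular form of weight $\tfrac{2m}{2}+2m^2=m(2m+1)$ on some $\Gamma_0(N)$ with an appropriate character. A theta series summed over a cone is in general only mixed mock modular: it has a non-holomorphic modular completion whose shadow is controlled by the behavior of $Q$ and of the polynomial along the boundary rays of $\mathcal C$. The task is to verify exactly the hypotheses of the Roehrig--Zwegers construction under which this completion is trivial and $\vartheta$ is a genuine holomorphic modular form; the harmonicity of $\Delta_{B_m}(\tilde s)\Delta_{B_m}(\tilde r)$ established above is one of them, and the remaining ones concern the interaction of this polynomial with the positive and the isotropic boundary rays of $\mathcal C$. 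Carrying this out, and then keeping track of the level $N$, the character, and holomorphy at every cusp, is where the bulk of the work lies.

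Finally I would identify $\vartheta$ with $\triangle(q)^{2m(2m+1)}$. Up to the power of $q$ above, $\triangle(q)^{2m(2m+1)}$ equals the $\eta$-quotient $g(\tau)=\eta(2\tau)^{4m(2m+1)}/\eta(\tau)^{2m(2m+1)}$, which is a holomorphic modular form of weight $m(2m+1)$ on $\Gamma_0(N)$ with the same character and is non-vanishing on $\bbH$. Hence $\vartheta/g$ is a modular function holomorphic on $\bbH$, and comparing its orders at the finitely many cusps — using the divisor of $g$ together with the cuspidal expansions of $\vartheta$ read off from its Roehrig--Zwegers transformation law — shows that $\vartheta/g$ is holomorphic everywhere, hence constant. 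The value of the constant is pinned down by the coefficient of the lowest power of $q$: the contributions of minimal $Q$-value all have $k_j=0$ and $\{r_1,\dots,r_m\}=\{0,1,\dots,m-1\}$, the $m!$ reorderings contribute equally by the antisymmetry of $\Delta_{B_m}$, and the resulting leading coefficient of $\vartheta$ equals $m!\,\bigl(\prod_{j=1}^m(2j-1)!\bigr)^2/2^m$ by a product evaluation of the Weyl denominators of $C_m$ and $B_m$ at their Weyl vectors — which is precisely the computation that forces the normalizing factor in \eqref{intro3}. This recovers \eqref{intro3}, completing the proof. (Alternatively, once $\vartheta$ and $q^{m(2m+1)/4}\triangle(q)^{2m(2m+1)}$ are known to be holomorphic modular forms of the same weight, level, and character, their equality follows from the leading-coefficient computation above together with a check of finitely many further Fourier coefficients up to Sturm's bound.)
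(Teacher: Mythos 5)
Your proposal is correct in outline and follows essentially the same route as the paper's analytic proof: the same change of variables recasts the right-hand side as a signature-$(m,m)$ cone theta function weighted by the spherical polynomial $V_m(\vec{\bm{x}})=\Delta_{B_m}(\tilde s)\Delta_{B_m}(\tilde r)$ (cf.\ \cref{def:Vm} and \cref{thm:KW-rewrite}), modularity is obtained by extending the Roehrig--Zwegers construction to this setting (\cref{thm:indefinite-theta} and \cref{thm:KW-indefinite}), and the identity follows by comparing with the eta-quotient $\theta_\triangle(\tau)^{2m(2m+1)}$ on the genus-zero curve $\bbH^\ast/\Gamma(2)$ together with the leading-coefficient computation $V_m\bigl(\smat{1\\ 1/2},\dots,\smat{m\\ m-1/2}\bigr)=2^{-m}\bigl(\prod_{j=1}^m(2j-1)!\bigr)^2$. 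The step you correctly identify as the principal obstacle --- proving that the completion is trivial, i.e.\ carrying out the degenerate limit of the cone parameters and controlling the cusp expansions --- is precisely where the paper's Sections~\ref{Modular-framework} and~\ref{Modularity-Kac--Wakimoto} do their work, so your plan matches the paper's proof while deferring that verification.
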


The idea of the proof is simple.
Divide the left-hand side of \eqref{intro2} by $R_0\coloneqq\prod_{\alpha\in\Phi_0^+}(1-e^{-\alpha})$ and substitute $-q$ to $e^{-\delta/2}$,  where $\delta\in\widehat{\Phi}_0^+$ is the null root.
Finally,  take the limit as $e^{-\alpha}\to1$ for each finite simple root $\alpha$,  then the infinite product becomes $\left(\prod_{n=1}^\infty(1-q^{2n})(1-q^{2n-1})^{-1}\right)^{2m(2m+1)}$.
By the following well-known identity,  we obtain $\triangle(q)^{2m(2m+1)}$ from the left-hand side: 
    \begin{equation}\label{eq:triangle}
    \triangle(q) = \prod_{n=1}^\infty\frac{1-q^{2n}}{1-q^{2n-1}}.
    \end{equation}
    
Unfortunately,  the computation in Kac--Wakimoto \cite{KacWakimoto1994} is slightly incorrect,  especially the formula in their Example 5.4 needs one minor modification.
As a matter of fact,  they knew the correct formula \eqref{intro3},  which is stated as Conjecture 5.1 in their article.
\textbf{The first goal of this article is to see that their conjecture is a consequence of their computation. }
    
The equation \eqref{eq:triangle} is an easy consequence of the Jacobi triple product identity. 
Another way to look at \eqref{eq:triangle} is to regard both sides as \emph{modular forms}.
After multiplying by $q^{1/8}$,  the left-hand side becomes the Jacobi theta function and the right-hand side becomes a certain quotient of the Dedekind eta function. 
It is known that both are modular forms of weight $1/2$ on $\Gamma(2)$. 
See \eqref{eq:theta-eta} for details.
In particular,  this observation implies that the left-hand side of \eqref{intro3},  multiplied by a certain power of $q$,  is a modular form and so is the right-hand side. 

Conversely,  suppose that one can prove that both sides of \eqref{intro3} are modular forms with the same weight,  the same level,  and the same asymptotic behavior at each cusp.
Then the quotient $\triangle(q)^{-2m(2m+1)} \times (\mathrm{RHS}\text{ of \eqref{intro3}})$ defines a holomorphic function on $\bbH^\ast/\Gamma(2)\simeq\bP^1(\C)$,  where $\bbH^\ast \coloneqq \bbH\cup\Q\cup\{i\infty\}$ is the completed upper half-plane.
Hence it should be a constant,  which turns out to equal $1$.
\textbf{The second goal of this article is to give another proof of \cref{thm:Kac-Wakimoto-Conj} in this way.} 
The strategy is as follows: The expression in~\cref{thm:Kac-Wakimoto-Conj} may appear intricate, but through an appropriate change of variables, it transforms into a form resembling a theta function associated with an indefinite quadratic from, as shown in~\cref{thm:KW-rewrite}. 
In particular, for $m=1$, by entering the theory of indefinite theta functions with spherical polynomials, established by Roehrig--Zwegers~\cite{RoehrigZwegers2022, RoehrigZwegers2022-pre}, its modularity follows. 
The aim in the latter part of this article is to extend their result to general $m$ by formulating theorems (\cref{thm:indefinite-theta} and \cref{thm:KW-indefinite}) that apply in general cases.

There are several other affine Lie superalgebras whose denominator identities provide equations for some powers of $\triangle(q)$.
Zagier \cite{Zagier2000} studied the case of $\triangle(q)^{4m^2}$ and $\triangle(q)^{4m(m+1)}$ which are obtained from the denominator identities of the affine Lie superalgebra $\widehat{\mathfrak{q}}(N)$.
In \cite{MatsusakaSuzuki2025-pre}, the authors study the remaining cases with a similar method.

This article is organized as follows.  
The first two sections are the algebraic part.
In \cref{Lie-superalgebra},  we introduce the finite-dimensional Lie superalgebra $\fg=\spo(2m,  2m+1)$ and the notation for the root system of $\fg$.
\cref{Affine Lie superalgebra} is devoted to following the computation of Kac--Wakimoto \cite{KacWakimoto1994}.
We introduce the affine Lie superalgebra $\widehat{\fg}=\widehat{\spo}(2m,  2m+1)$ and explicitly write down its root system and the denominator identity. 

The latter two sections are the analytic part.
Vign\'eras' result (\cref{fact:Vig}) on the modular transformation of theta functions plays a crucial role in our argument. 
We rewrite the right-hand side of \eqref{intro3} as an indefinite theta function $\mathrm{KW}_m(\tau)$ in \cref{Modular-framework},  so that Vign\'eras' theorem can be applied.
To apply Vign\'eras' theorem,  we need to find an appropriate function $p\,\colon\R^{2m}\to\C$ with two important properties.
In \cref{indefinite-theta},  we define the function $p=p^{\vec{\bm{c}}_0, \vec{\bm{c}}_1}[f](\vec{\bm{x}})$ generalizing the construction of Roehrig--Zwegers and prove that it has the necessary properties if the polynomial $f$ is spherical.
We show in \cref{modularity-KW} that the polynomial $f=V_m(\vec{\bm{x}})$,  which appears in $\mathrm{KW}_m(\tau)$,  is a spherical polynomial.
After a careful argument on some convergence problems,  we obtain the modular transformation laws for $\mathrm{KW}_m(\tau)$ (\cref{cor:modular-KW}) and its asymptotic behavior at each cusp (\cref{cor:cusps-KW}).
Combining all these results,  we obtain a new proof of \cref{thm:Kac-Wakimoto-Conj}.

\bigskip

\noindent\textbf{Data Availability Statement.}
All data generated or analyzed during this study are included in this published article.

\bigskip

\noindent\textbf{Compliance with Ethical Standards.}
All authors declare that they have no conflicts of interest.

\section*{Acknowledgment}

The authors would like to thank Kazuki Kannaka for his valuable comments during the closed study group on affine Lie superalgebras. 
They also thank the anonymous referees for their careful reading and helpful comments.
The first author was supported by JSPS KAKENHI (JP21K18141 and JP24K16901) and the MEXT Initiative through Kyushu University's Diversity and Super Global Training Program for Female and Young Faculty (SENTAN-Q). The second author was supported by JSPS KAKENHI (JP22K13891 and JP23K20785).

\section{Lie superalgebra $\spo(2m,  2m+1)$}\label{Lie-superalgebra}

Let $\fg=\fg_0\oplus\fg_1$ be a $\Z/2\Z$-graded vector space over $\C$ and $[\cdot ,\cdot ]\,\colon \fg\times\fg\to\fg$ a bilinear form.
We say that $(\fg,  [\cdot ,\cdot ])$ is a \emph{Lie superalgebra}  if it satisfies the following conditions (i)--(iii):
\begin{itemize}
\item[(i)] $[\fg_i,  \fg_j]\subset\fg_{i+j}$ for $i,  j\in\Z/2\Z$.
\item[(ii)] $[X, Y]+(-1)^{ij}[Y,X]=0$ for $X\in\fg_i$ and $Y\in\fg_j$.
\item[(iii)] $(-1)^{ki}[X,[Y,Z]]+(-1)^{ij}[Y,[Z,X]]+(-1)^{jk}[Z,[X,Y]]=0$ for $X\in\fg_i$,  $Y\in\fg_j$,  and $Z\in\fg_k$.
\end{itemize}
A graded subspace of $\fg$ is called a subalgebra if it is closed under the Lie bracket $[\cdot, \cdot]$.

A typical example is $\gl(M,  N)$ for non-negative integers $M,  N$.
As a vector space,  $\gl(M,  N)=\M_{M+N}(\C)$,  the grading is given by 
    \begin{align*}
    \gl(M,  N)_0 &= \left\{
        \begin{pmatrix}
        A &  \\
         & D
        \end{pmatrix}
    \,\middle|\,  A\in\M_M(\C),  \ D\in\M_N(\C) \right\},  \\
    \gl(M,  N)_1 &= \left\{
        \begin{pmatrix}
         & B \\
        C & 
        \end{pmatrix}
    \,\middle|\,  B\in\M_{M\times N}(\C),  \ C\in\M_{N\times M}(\C) \right\},
    \end{align*}
and the Lie bracket is determined by $[X,  Y]=XY-(-1)^{ij}YX$ for $X\in\gl(M,N)_i$ and $Y\in\gl(M,N)_j$.
We define the \emph{super-trace} $\str\,\colon\gl(M,N)\to\C$ by $\str
    \begin{pmatrix}
    A & B \\
    C & D
    \end{pmatrix}
=\tr(A)-\tr(D)$,  where $\tr(\cdot)$ denotes the usual trace of matrices.
Then,  the subspace $\sl(M, N)$ of $\gl(M,  N)$ consisting of $X\in\gl(M,  N)$ with $\str(X)=0$ is a subalgebra.

Let $(\cdot,  \cdot)\,\colon\gl(M,N)\times\gl(M,N)\to\C$ be a bilinear form given by $(X,  Y)=\str(XY)$,  where $XY$ denotes the usual product of matrices.
It is easy to check that this is non-degenerate and has the following properties:
\begin{itemize}
\item (\emph{even}): \ $(X,Y)=0$ for $X\in\gl(M,N)_i$ and $Y\in\gl(M,N)_j$ if $i+j=1$.
\item (\emph{invariant}): \ $([X,Y],  Z)=(X,[Y,Z])$ for $X,  Y,  Z\in\gl(M,  N)$.
\item (\emph{super-symmetric}): \ $(X,Y)=(-1)^{ij}(Y,  X)$ for $X\in\gl(M,N)_i$ and $Y\in\gl(M,N)_j$.
\end{itemize}

Let $\fg=\spo(2m,  2m+1)$ be the subalgebra of $\sl(2m,  2m+1)$ defined as
    \[
    \spo(2m,  2m+1) = \left\{X\in\sl(2m,  2m+1) \, \middle|\, \st X
         \begin{pmatrix}
        J_1 & \\
        & J_2
        \end{pmatrix}
    +
         \begin{pmatrix}
        J_1 & \\
        & J_2
        \end{pmatrix}
    X=0\right\}.
    \]
Here,   $J_1=
    \begin{pmatrix}
     & \1_m \\
    -\1_m & 
    \end{pmatrix}$,  $J_2=
    \begin{pmatrix}
     & & \1_m \\
     & 1 & \\
    \1_m & &
    \end{pmatrix}$
and $\st\bullet$ denotes the \emph{super-transpose} given by
    \[
    \largest
        \begin{pmatrix}
        A & B \\
        C & D
        \end{pmatrix}
    =
        \begin{pmatrix}
        \t A & \t C \\
        -\t B & \t D
        \end{pmatrix},  \qquad 
        \begin{pmatrix}
        A & B \\
        C & D
        \end{pmatrix}
    \in\gl(2m,  2m+1).
    \]
Then $\fg$ is called the \emph{ortho-symplectic Lie superalgebra} of type $B(m,m)$.
It consists of $
    \begin{pmatrix}
     A & B \\
    C & D
    \end{pmatrix}
\in\gl(2m,  2m+1)$ such that
    \begin{align*}
    & A\in\sp(2m) \coloneqq \{X\in\M_{2m}(\C) \mid \t XJ_1+J_1X=0\},   \\
    & D\in\so(2m+1) \coloneqq \{X\in\M_{2m+1}(\C) \mid \t XJ_2+J_2X=0\},   \\
    & B\in\M_{2m\times(2m+1)}(\C) ,  \quad C=J_2^{-1}\t BJ_1.
    \end{align*}  
Note that $\fg_0$ is a Lie algebra isomorphic to $\sp(2m)\oplus\so(2m+1)$.
One can check that the restriction of the bilinear form $(\cdot,  \cdot)$ on $\gl(2m,  2m+1)$ to $\spo(2m,  2m+1)$ is still non-degenerate.

Let $\fh$ be the \emph{Cartan subalgebra} of $\fg$ consisting of all diagonal elements.
A general element of $\fh$ is of the form
    \begin{equation}\label{Cartan-element}
    H = \diag(x_1,  \ldots,  x_m,  -x_1,  \ldots,  -x_m,  
    y_1,  \ldots,  y_m,  0,  -y_1,  \ldots,  -y_m)
    \end{equation}
with $x_i,  y_j\in\C$.
Since the restriction of $(\cdot,  \cdot)$ defines a non-degenerate pairing on $\fh$,  it induces a linear isomorphism $\fh^\ast\xrightarrow{\sim}\fh$,  where $\fh^\ast$ is the linear dual of $\fh$.
We define the pairing on $\fh^\ast$ as the pull-back of that on $\fh$ by this isomorphism,  which we again write as $(\cdot,  \cdot)$.

We define $\varepsilon_i,  \delta_j\in\fh^\ast$ by
    \[
    \varepsilon_i(H) = x_i,  \quad
    \delta_j(H) = y_j,  \hspace{30pt}  i,  j=1,  \ldots,  m,
    \]
where $H\in\fh$ is of the form \eqref{Cartan-element}.
Then,  the root system of $\fg$ with respect to $\fh$ is given as $\Phi=\Phi_0\cup\Phi_1$,  where
    \begin{align*}
    \Phi_0 & \coloneqq \{\pm(\varepsilon_i-\varepsilon_j), \ \pm(\varepsilon_i+\varepsilon_j), 
    \ \pm2\varepsilon_p \mid 1\leq i<j\leq m, \ 1\leq p\leq m\} \\
    & \qquad \cup\{\pm(\delta_i-\delta_j), \ \pm(\delta_i+\delta_j), \ 
    \pm\delta_q \mid 1\leq i<j\leq m, \ 1\leq q\leq m\},   \\
    \Phi_1 & \coloneqq \{\pm(\varepsilon_i-\delta_j), \ 
    \pm(\varepsilon_r+\delta_s), \ \pm\varepsilon_p 
    \mid 1\leq i,  j,  r,  s,  p\leq m\}.
    \end{align*}
For each root $\alpha\in\Phi$,  let $\fg_\alpha=\{X\in\fg \mid [H,  X]=\alpha(H)X \text{ for all $H\in\fh$}\}$ be the corresponding root subspace.
Then we have the root space decomposition $\fg_0 = \fh\oplus \bigoplus_{\alpha\in\Phi_0} \fg_\alpha$ and $\fg_1 = \bigoplus_{\alpha\in\Phi_1} \fg_\alpha$.

The \emph{Weyl group} $W$ of $\fg$ is defined to be the Weyl group of $\Phi_0$.
Since $\Phi_0$ is the union of two root systems of type $C_m$ and $B_m$,  we have $W\simeq(\fS_m\ltimes(\Z/2\Z)^m)\times(\fS_m\ltimes(\Z/2\Z)^m)$.

The following lemma is a consequence of an easy calculation.
\begin{lemma}
The set $\{\varepsilon_i,  \delta_j \mid 1\leq i,j\leq m\}$ forms an orthogonal basis of $\fh^\ast$ and we have $(\varepsilon_i,  \varepsilon_i)=1/2=-(\delta_j,  \delta_j)$ for $1\leq i,j\leq m$.

In particular,  we have
\begin{itemize}
\item $(\varepsilon_i\pm\varepsilon_j,  \varepsilon_i\pm\varepsilon_j)=1=-(\delta_i\pm\delta_j,  \delta_i\pm\delta_j)$,  
\item $(2\varepsilon_i,  2\varepsilon_i)=2$,
\item $(\varepsilon_i\pm\delta_j,  \varepsilon_i\pm\delta_j)=0$.
\end{itemize}
\end{lemma}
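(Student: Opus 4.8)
The plan is to unwind the two layers of definitions completely in coordinates: first the bilinear form $(X,Y)=\str(XY)$ restricted to $\fh$, and then its transport to $\fh^\ast$ along the isomorphism $\fh^\ast\xrightarrow{\sim}\fh$.

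First I would record the form on $\fh$. Taking $H,H'\in\fh$ of the shape \eqref{Cartan-element} with parameters $x_i,y_j$ and $x_i',y_j'$, the product $HH'$ is diagonal, and since the super-trace subtracts the trace taken over the last $2m+1$ coordinates (the $\so(2m+1)$ block) from the trace over the first $2m$ coordinates (the $\sp(2m)$ block), one gets $(H,H')=\str(HH')=2\sum_{i=1}^m x_ix_i'-2\sum_{j=1}^m y_jy_j'$, the central zero entry contributing nothing. This already exhibits the non-degeneracy on $\fh$ asserted in the text.

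Next I would identify, for each coordinate functional, its preimage in $\fh$: by definition $H_\lambda\in\fh$ is characterized by $\lambda(H)=(H_\lambda,H)$ for all $H\in\fh$, so reading off the formula above, $\varepsilon_i$ corresponds to the diagonal matrix with $\tfrac12$ in slot $i$, $-\tfrac12$ in slot $m+i$, and zeros elsewhere, while $\delta_j$ corresponds to the diagonal matrix with $-\tfrac12$ in slot $2m+j$, $\tfrac12$ in slot $3m+1+j$, and zeros elsewhere. The $\varepsilon$-matrices have pairwise disjoint support and lie entirely in the symplectic block, the $\delta$-matrices have pairwise disjoint support and lie entirely in the orthogonal block, and the two families have disjoint support from each other; hence applying $(\cdot,\cdot)$ to any two distinct members gives $0$, which is the orthogonality claim. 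For the norms, $(\varepsilon_i,\varepsilon_i)=2\cdot(\tfrac12)^2=\tfrac12$ and $(\delta_j,\delta_j)=-2\cdot(\tfrac12)^2=-\tfrac12$, and $\{\varepsilon_i,\delta_j\}$ is then an orthogonal — in particular linearly independent — set of $2m=\dim\fh^\ast$ vectors, hence a basis. The ``in particular'' items follow by bilinearity: $(\varepsilon_i\pm\varepsilon_j,\varepsilon_i\pm\varepsilon_j)=(\varepsilon_i,\varepsilon_i)+(\varepsilon_j,\varepsilon_j)=1$ (cross term vanishes), $(2\varepsilon_i,2\varepsilon_i)=4(\varepsilon_i,\varepsilon_i)=2$, and $(\varepsilon_i\pm\delta_j,\varepsilon_i\pm\delta_j)=(\varepsilon_i,\varepsilon_i)+(\delta_j,\delta_j)=\tfrac12-\tfrac12=0$, with the analogous computations for the $\delta$-roots.

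There is no genuine obstacle here; the one point demanding care is the sign convention in the super-trace — that it is $\tr(A)-\tr(D)$ on the $2m$- versus $(2m{+}1)$-dimensional blocks — since this is exactly what produces the opposite signs $(\varepsilon_i,\varepsilon_i)=+\tfrac12$ and $(\delta_j,\delta_j)=-\tfrac12$, and hence the isotropy $(\varepsilon_i\pm\delta_j,\varepsilon_i\pm\delta_j)=0$ that is the source of the indefiniteness exploited later in the paper.
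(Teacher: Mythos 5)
Your proof is correct, and since the paper omits the argument entirely (calling the lemma ``a consequence of an easy calculation''), your direct computation --- evaluating $(H,H')=\str(HH')=2\sum_i x_ix_i'-2\sum_j y_jy_j'$ on $\fh$, identifying the preimages $H_{\varepsilon_i}$, $H_{\delta_j}$ under $\fh^\ast\xrightarrow{\sim}\fh$, and reading off the pairings --- is exactly the intended calculation. All the constants check out, including the sign $-2b_j=1$ forcing the $\delta_j$-preimage to carry parameter $-\tfrac12$, which is where $(\delta_j,\delta_j)=-\tfrac12$ comes from.
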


\begin{definition}
For $i=1,  \ldots,  2m$,  set
    \[
    \alpha_i=
        \begin{cases}
        \delta_k-\varepsilon_k & i=2k-1,  \ (1\leq k\leq m),  \\
        \varepsilon_k-\delta_{k+1} & i=2k,  \ (1\leq k\leq m-1),  \\
        \varepsilon_m & i=2m,
        \end{cases}
    \]
and $\Delta=\{\alpha_i \mid i=1,  \ldots,  2m\}$.
\end{definition}

Put $\Phi_i^+=(\sum_{\alpha\in\Delta}\Z_{\geq0}\cdot\alpha)\cap\Phi_i$,  ($i=0,  1$), and $\Phi^+=\Phi_0^+\cup\Phi_1^+$.
A simple calculation shows that
    \begin{align*}
    \Phi_0^+ & = \{\varepsilon_i-\varepsilon_j, \ 
    \varepsilon_i+\varepsilon_j, 
    \ 2\varepsilon_p \mid 1\leq i<j\leq m, \ 1\leq p\leq m\} \\
    & \qquad \cup\{\delta_i-\delta_j, \ \delta_i+\delta_j,  \ 
    \delta_q \mid 1\leq i<j\leq m, \ 1\leq q\leq m\},   \\
    \Phi_1^+ &= \{\delta_i-\varepsilon_j, \ 
    \varepsilon_k-\delta_l, \  \varepsilon_r+\delta_s, \ \varepsilon_p 
    \mid 1\leq i\leq j\leq m,  \ 1\le k<l\leq m,  \ 1\leq p,  r,  s\leq m \}.
    \end{align*}
In particular,  we have $\Phi_i=\Phi_i^+\cup(-\Phi_i^+)$, ($i=0,1$),  which implies that $\Delta$ is a fundamental set of roots.
 
Set    
    \begin{align*}
    \Phi_0^\sharp & = \{\alpha\in\Phi_0 \mid (\alpha,  \alpha)>0 \} \\
    & = \{\pm(\varepsilon_i-\varepsilon_j), \ \pm(\varepsilon_i+\varepsilon_j), 
    \ \pm2\varepsilon_p \mid 1\leq i<j\leq m, \ 1\leq p\leq m\}. 
    \end{align*}
This is a root system of type $C_m$.
Let $W^\sharp$ denote its Weyl group and $\epsilon\,\colon W^\sharp\to\{\pm1\}$ denote the sign character.
    
The \emph{denominator} of $\fg$ is defined as
    \[
    R = \frac{\prod_{\alpha\in\Phi_0^+}(1-e^{-\alpha})}
    {\prod_{\alpha\in\Phi_1^+}(1+e^{-\alpha})}.
    \]
This is viewed as an element of $\C(\mkern-4mu( e^{-\alpha_1/2},  \ldots,  e^{-\alpha_{2m}/2} )\mkern-4mu)$,  the field of formal Laurent series.
Note that $W^\sharp$ acts on $\C(\mkern-4mu( e^{-\alpha_1/2},  \ldots,  e^{-\alpha_{2m}/2} )\mkern-4mu)$ as a field automorphism by $w\left(\sum_{\lambda\in \frac12 \Z\Delta}c_\lambda e^{-\lambda}\right)=\sum_{\lambda\in \frac12 \Z\Delta}c_\lambda e^{-w(\lambda)}$,  where $w\in W^\sharp$ and $c_\lambda\in\C$.
The next theorem is called the \emph{denominator identity} for $\fg$.

\begin{theorem}[Gorelik \cite{Gorelik12}]\label{finite-denominator}
Let $\rho=\frac12\left(\sum_{\alpha\in\Phi_0^+}\alpha-\sum_{\Phi_1^+}\alpha\right)$ and $S=\{\alpha_{2j-1} \mid j=1,  \ldots,  m\}$. 
Then we have
    \[
    e^\rho R = \sum_{w\in W^\sharp}\epsilon(w)w\left(
    \frac{e^\rho}{\prod_{\beta\in S}(1+e^{-\beta})}\right).
    \]
\end{theorem}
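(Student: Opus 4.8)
The identity to be proved expresses $e^{\rho}R$, an element of the field of formal Laurent series, as an explicit alternating sum over the finite group $W^{\sharp}$. My plan is to characterize $e^{\rho}R$ by two properties — anti-invariance under $W^{\sharp}$, together with a confinement of its support and an explicit evaluation of its ``dominant'' coefficients — and then to observe that the right-hand side shares these properties, so that the two sides agree. I would begin with the anti-invariance.

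\textbf{Anti-invariance.} Since $W^{\sharp}$ is the Weyl group of the type-$C_{m}$ system $\Phi_{0}^{\sharp}$, it is generated by the reflections $s_{\varepsilon_{i}-\varepsilon_{i+1}}$ ($1\le i<m$) and $s_{2\varepsilon_{m}}$, so it suffices to check $s(e^{\rho}R)=-e^{\rho}R$ for each such generator. Each such $s$ negates one root of $\Phi_{0}^{\sharp+}$ and permutes the remaining even positive roots (the $\delta$-roots $\delta_{i}\pm\delta_{j},\delta_{q}$ being fixed), while on $\Phi_{1}^{+}$ it permutes roots up to monomial factors, except that $s_{2\varepsilon_{m}}$ additionally sends $\varepsilon_{m}\mapsto-\varepsilon_{m}$. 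Each replacement $\alpha\mapsto-\alpha$ turns a factor $1\mp e^{-\alpha}$ into $\mp e^{-\alpha}(1\mp e^{\alpha})$, contributing a sign and a monomial; because $\rho=\tfrac12\bigl(\sum_{\alpha\in\Phi_{0}^{+}}\alpha-\sum_{\alpha\in\Phi_{1}^{+}}\alpha\bigr)$, the shift $e^{s(\rho)-\rho}$ cancels the product of all these monomials, leaving the overall sign $-1$. (One may also use that $e^{\rho}R$ is unchanged by odd reflections through isotropic roots, which allows a convenient choice of Borel.) The right-hand side is $W^{\sharp}$-anti-invariant by construction, being of the form $\sum_{w\in W^{\sharp}}\epsilon(w)\,w(F)$.

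\textbf{Expanding the right-hand side and matching.} Writing $\beta_{k}:=\delta_{k}-\varepsilon_{k}$ for the elements of $S$ and using that the $\beta_{k}$ are linearly independent, $\prod_{\beta\in S}(1+e^{-\beta})^{-1}=\sum_{\vec{n}\in\Z_{\ge0}^{m}}(-1)^{|\vec{n}|}e^{-\sum_{k}n_{k}\beta_{k}}$, so the right-hand side equals $\sum_{w\in W^{\sharp}}\sum_{\vec{n}\ge0}\epsilon(w)(-1)^{|\vec{n}|}e^{w(\rho-\sum_{k}n_{k}\beta_{k})}$; one checks that each fixed weight receives only finitely many contributions, so this lies in the same completion as $e^{\rho}R$. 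By the anti-invariance already established, it then suffices to compare the coefficients of $e^{\mu}$ for $\mu$ ranging over the $W^{\sharp}$-dominant weights. On this chamber I would evaluate $e^{\rho}R$ directly: the $\delta$-dependent even factors $\prod_{i<j}(1-e^{-(\delta_{i}-\delta_{j})})(1-e^{-(\delta_{i}+\delta_{j})})\prod_{q}(1-e^{-\delta_{q}})$ are resolved by the ordinary Weyl denominator identity for the type-$B_{m}$ system in the $\delta$'s, the remaining $\varepsilon$-even and fermionic factors are expanded as geometric series, and a weight-preserving sign-reversing involution on the resulting index set cancels everything except the terms matching the $w=1$ summands on the right. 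An alternative to this last step is an induction on $m$: the case $m=1$ is a direct rational-function computation (then $\prod_{\beta\in S}$ has a single factor), and the inductive step writes $W^{\sharp}$ of type $C_{m}$ as a union of $2m$ cosets of $W^{\sharp}$ of type $C_{m-1}$ and carries out a parabolic-type reduction separating off the variables $\varepsilon_{m},\delta_{m}$, so as to reduce to $\spo(2(m-1),2(m-1)+1)$.

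\textbf{Main obstacle.} The crux is this last step. Unlike the ordinary Weyl denominator identity, $e^{\rho}R$ is a genuine infinite series, so anti-invariance alone does not pin it down; one needs precise control of its support (confinement to a shifted cone) together with the explicit evaluation of the dominant coefficients, equivalently an explicit sign-reversing involution — this is where the real work lies, and is the content of Gorelik's argument. In the induction-on-$m$ route the analogous difficulty is carrying out the parabolic bookkeeping cleanly, in particular tracking how the coset representatives for $W^{\sharp}$ act on the factor $\prod_{\beta\in S}(1+e^{-\beta})$.
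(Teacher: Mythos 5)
There is no in-paper proof to compare against here: \cref{finite-denominator} is imported verbatim from Gorelik \cite{Gorelik12} (the finite-dimensional case of the Kac--Wakimoto denominator conjecture), and the present paper only uses it, together with its rewriting \eqref{denominator}, as an external input. So the relevant question is whether your proposal would constitute a self-contained proof, and it would not. The outline you give -- establish $W^{\sharp}$-anti-invariance of $e^{\rho}R$, expand $\prod_{\beta\in S}(1+e^{-\beta})^{-1}$ as a geometric series, and then match coefficients on the dominant chamber via a sign-reversing involution or an induction on $m$ -- is the standard shape of such arguments, and the anti-invariance step is sketched essentially correctly (each generator $s_{\varepsilon_{i}-\varepsilon_{i+1}}$, $s_{2\varepsilon_{m}}$ negates exactly one even positive root of nonzero norm, the resulting monomials are absorbed by $e^{s(\rho)-\rho}$, and the non-isotropic odd roots $\pm\varepsilon_{p}$ contribute no extra sign because their factors carry a plus). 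But the decisive step -- the weight-preserving sign-reversing involution that kills all terms of $e^{\rho}R$ except those matching the $w=1$ summand, equivalently the control of the support and dominant coefficients of the infinite series $e^{\rho}R$ -- is asserted, not constructed. You say yourself that this ``is where the real work lies, and is the content of Gorelik's argument''; that is precisely the theorem. Anti-invariance alone cannot pin down an infinite anti-invariant series, and neither of your two proposed routes (involution, or induction on $m$ with a parabolic reduction) is carried far enough to see that it closes.

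A secondary caution: the reduction ``it suffices to compare coefficients of $e^{\mu}$ for $W^{\sharp}$-dominant $\mu$'' needs the additional observation that coefficients of weights on chamber walls vanish (or are otherwise controlled), and the right-hand side $\sum_{w}\sum_{\vec{n}}\epsilon(w)(-1)^{|\vec{n}|}e^{w(\rho-\sum_{k}n_{k}\beta_{k})}$ must be shown to live in the same completion $\C(\mkern-4mu(e^{-\alpha_{1}/2},\ldots,e^{-\alpha_{2m}/2})\mkern-4mu)$ as $e^{\rho}R$ before termwise comparison is legitimate; you flag the finiteness of contributions to each weight but do not verify it. As it stands the proposal is a correct road map to Gorelik's theorem, not a proof of it; for the purposes of this paper the honest course is to do what the authors do and cite \cite{Gorelik12}.
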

As remarked in Section 3 of Kac--Wakimoto \cite{KacWakimoto1994},  we can rewrite \cref{finite-denominator} as
    \begin{equation}\label{denominator}
    e^\rho R = \frac{1}{2^mm!}\sum_{w\in W}\epsilon(w)w\left(
    \frac{e^\rho}{\prod_{\beta\in S}(1+e^{-\beta})}\right).    
    \end{equation}

\section{Affine Lie superalgebra $\widehat{\spo}(2m,  2m+1)$}\label{Affine Lie superalgebra}

Keep the notation of the previous section,  in particular $\fg=\spo(2m,  2m+1)$.
We define the \emph{affine Lie superalgebra} $\widehat{\fg}=\widehat{\spo}(2m,  2m+1)$ as follows.
As a vector space,  $\widehat{\fg}=\fg\otimes\C[t^{\pm1}]\oplus\C K\oplus\C d$  and the grading is given by $\widehat{\fg}_0=\fg_0\otimes\C[t^{\pm1}]\oplus\C K\oplus\C d$ and $\widehat{\fg}_1=\fg_1\otimes\C[t^{\pm1}]$.
For $X\in\fg$ and $i\in\Z$,  we simply write $X\otimes t^i$ as $Xt^i$.
The Lie bracket is determined by the following conditions:
\begin{itemize}
\item $[Xt^i,  Yt^j]=[X,Y]t^{i+j}+i\delta_{i,-j}(X,Y)K$ for $X,Y\in\fg$ and $i,  j\in\Z$.
Here,  $\delta_{k,l}$ denotes the Kronecker delta.
\item $[\widehat{\fg},  K]=0$.
\item $[d,  Xt^i]=iXt^i$ for $X\in\fg$ and $i\in \Z$.
\end{itemize}

Let $(\cdot,\cdot)\,\colon\widehat{\fg}\times\widehat{\fg}\to\C$ be the even super-symmetric pairing determined by the following conditions:
\begin{itemize}
\item $(Xt^i,  Yt^j)=\delta_{i,-j}(X,Y)$ for $X,Y\in\fg$ and $i,  j\in\Z$.
\item $(\fg\otimes\C[t^{\pm1}], K)=(\fg\otimes\C[t^{\pm1}], d)=0$.
\item $(K,K)=(d,d)=0$ and $(K,d)=1$.
\end{itemize}
One can see that $(\cdot,\cdot)$ is invariant and non-degenerate.

Set $\widehat{\fh}=\fh\oplus\C K\oplus\C d$.
This is called the Cartan subalgebra of $\widehat{\fg}$.
A linear form $\lambda\in\fh^\ast$ is viewed as an element of $\widehat{\fh}^\ast$ such that $\lambda(K)=\lambda(d)=0$.
Since the restriction of $(\cdot,\cdot)$ defines a non-degenerate pairing on $\widehat{\fh}$,  it induces a linear isomorphism $\widehat{\fh}^\ast\xrightarrow{\sim}\widehat{\fh}$.
We define the pairing on $\widehat{\fh}^\ast$ as the pull-back of that on $\widehat{\fh}$ by this isomorphism,  which we again write as $(\cdot,  \cdot)$.
Since $(\cdot,\cdot)|_{\fh^\ast\times\fh^\ast}$ coincides with the original pairing on $\fh^\ast$,  the Weyl groups $W$ and $W^\sharp$ naturally act on $\widehat{\fh}^\ast$.

We define $\delta, \gamma\in\widehat{\fh}^\ast$ by
    \[
    \delta(\fg\otimes\C[t^{\pm1}]) = \gamma(\fg\otimes\C[t^{\pm1}]) = 0,  \qquad
    \delta(K) = \gamma(d) = 0,  \qquad \delta(d) = \gamma(K) = 1.
    \]
Note that $\delta$ and $\gamma$ correspond to $K$ and $d$, respectively,  via the isomorphism $\widehat{\fh}^\ast\to\widehat{\fh}$.
Then,  the root system of $\widehat{\fg}$ with respect to $\widehat{\fh}$ is given as $\widehat{\Phi}=\widehat{\Phi}_0\cup\widehat{\Phi}_1$,  where
    \[
    \widehat{\Phi}_0  = \{\alpha+n\delta \mid \alpha\in\Phi_0,  \ n\in\Z\}
    \cup\{n\delta \mid n\in\Z\setminus\{0\}\},   \qquad
    \widehat{\Phi}_1  = \{\alpha+n\delta \mid \alpha\in\Phi_1,  \ n\in\Z\}.
    \]
For each root $\beta\in\widehat{\Phi}$,  let $\widehat{\fg}_\beta=\{X\in\widehat{\fg} \mid [H,  X]=\beta(H)X \text{ for all $H\in\widehat{\fh}$}\}$ be the corresponding root subspace.
It is easy to see that $\widehat{\fg}_{\alpha+n\delta}=\fg_\alpha\otimes t^n$ and $\widehat{\fg}_{n\delta}=\fh\otimes t^n$ for $\alpha\in\Phi$,  $n\in\Z$.
In particular,  we have the root space decomposition $\widehat{\fg}_0=\widehat{\fh}\oplus\bigoplus_{\beta\in\widehat{\Phi}_0}\widehat{\fg}_\beta$ and $\widehat{\fg}_1=\bigoplus_{\beta\in\widehat{\Phi}_1}\widehat{\fg}_\beta$.
Note that $\dim(\widehat{\fg}_{\alpha+n\delta})=1$ and $\dim(\widehat{\fg}_{n\delta})=2m$ for $\alpha\in\Phi$,  $n\in\Z$. 

Set $\theta=\varepsilon_1+\delta_1\in\Phi_1^+$.
This is the highest root of $\Phi$ with respect to $\Delta$.

\begin{definition}
Set $\alpha_0=\delta-\theta$ and $\widehat{\Delta}=\Delta\cup\{\alpha_0\}=\{\alpha_i \mid i=0,  \ldots,  2m\}$.
\end{definition}

Put $\widehat{\Phi}_i^+=(\sum_{\alpha\in\widehat{\Delta}}\Z_{\geq0}\cdot\alpha)\cap\widehat{\Phi}_i$,  ($i=0,  1$) and $\widehat{\Phi}^+=\widehat{\Phi}_0^+\cup\widehat{\Phi}_1^+$.
A simple calculation shows that 
    \[
    \widehat{\Phi}_0^+  = \Phi_0^+\cup 
    \{ \alpha+n\delta \mid \alpha\in\Phi_0\cup\{0\},\ n\in\Z_{>0} \},  \qquad
    \widehat{\Phi}_1^+ = \Phi_1^+\cup 
    \{\alpha+n\delta \mid \alpha\in\Phi_1, \ n\in\Z_{>0} \}.
    \]
In particular,  we have $\widehat{\Phi}_i=\widehat{\Phi}_i^+\cup(-\widehat{\Phi}_i^+)$,  ($i=0,1$).
This  implies that $\widehat{\Delta}$ is a fundamental set of roots of $\widehat{\Phi}$.

\begin{definition}
The denominator $\widehat{R}$ of $\widehat{\fg}$ is defined as 
    \[
    \widehat{R} = \frac{\prod_{\beta\in\widehat{\Phi}_0^+}
    (1-e^{-\beta})^{\dim(\widehat{\fg}_\beta)}}
    {\prod_{\beta\in\widehat{\Phi}_1^+}
    (1+e^{-\beta})^{\dim(\widehat{\fg}_\beta)}}
    = R\prod_{n=1}^\infty\left( (1-e^{-n\delta})^{2m}\cdot
    \frac{\prod_{\alpha\in\Phi_0^+}(1-e^{-\alpha-n\delta})(1-e^{\alpha-n\delta})}
    {\prod_{\alpha\in\Phi_1^+}(1+e^{-\alpha-n\delta})
    (1+e^{\alpha-n\delta})}\right).
    \]
\end{definition}

Let $M^\sharp$ be the coroot lattice of $\Phi_0^\sharp$,  \emph{i.e.} the free $\Z$-module of rank $m$ spanned by $\{2(\alpha_{2i}+\alpha_{2i+1}) \mid i=1,  \ldots,  m\}$,  where $\alpha_{2m+1}\coloneqq0$.
We define the \emph{affine Weyl group} as $\widehat{W}^\sharp=W^\sharp\ltimes M^\sharp$.

\begin{definition}
For $\alpha\in\fh^\ast$,  we define $t_\alpha\,\colon \widehat{\fh}^\ast\to\widehat{\fh}^\ast$ by
    \[
    t_\alpha(\lambda) = \lambda+\lambda(K)\alpha
    -((\lambda,\alpha)+\tfrac12(\alpha,\alpha)\lambda(K))\delta.
    \]
\end{definition}

One can show that $t_\alpha\in\Aut(\widehat{\fh}^\ast)$, $t_\alpha t_\beta=t_{\alpha+\beta}$, and $wt_\alpha w^{-1}=t_{w(\alpha)}$ for $w\in W$ and $\alpha,  \beta\in\fh^\ast$.
Hence we obtain an action of $\widehat{W}^\sharp$ on $\widehat{\fh}^\ast$.
The next theorem is called the denominator identity for $\widehat{\fg}$.
\begin{theorem}[Gorelik \cite{Gorelik11}]
Let $\widehat{\rho}=\frac12\left(\gamma-\sum_{j=1}^m\alpha_{2j-1}\right)$.
Then we have
    \begin{equation}\label{affine-denominator}
    e^{\widehat{\rho}} \widehat{R} = \sum_{\alpha^\sharp\in M^\sharp} t_{\alpha^\sharp}(
    e^{\widehat{\rho}}R) = \sum_{w\in \widehat{W}^\sharp}\epsilon(w)w\left(
    \frac{e^{\widehat{\rho}}}{\prod_{\beta\in S}(1+e^{-\beta})} \right).
    \end{equation}
\end{theorem}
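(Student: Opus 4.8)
The plan is to deduce \eqref{affine-denominator} from the finite denominator identity \cref{finite-denominator} by the classical ``affinization'' argument, working throughout in the completed group algebra $\widehat{\cR}$ of $\widehat{\fh}^\ast$, i.e. the ring of formal sums $\sum_\mu c_\mu e^\mu$ whose support lies in a finite union of cones $\nu-\sum_{i=0}^{2m}\Z_{\geq 0}\alpha_i$. Writing $q=e^{-\delta}$, this is the ring of power series in $q$ whose coefficients are finite in each $q$-degree, and $\widehat{W}^\sharp$ acts on it by ring automorphisms. I would prove the two equalities in \eqref{affine-denominator} separately, writing $\widehat{b}=e^{\widehat{\rho}}/\prod_{\beta\in S}(1+e^{-\beta})$.

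For the second equality: evaluating on $\fh$, $K$, $d$ one finds $\widehat{\rho}-\rho=\tfrac12\gamma$, and since $W^\sharp$ acts trivially on $\C K\oplus\C d$ it fixes $\gamma$ and $\delta$; hence $w(\widehat{b})=e^{\widehat{\rho}-\rho}\,w\bigl(e^{\rho}/\prod_{\beta\in S}(1+e^{-\beta})\bigr)$ for every $w\in W^\sharp$, and \cref{finite-denominator} gives $\sum_{w\in W^\sharp}\epsilon(w)w(\widehat{b})=e^{\widehat{\rho}-\rho}e^{\rho}R=e^{\widehat{\rho}}R$. Now $\widehat{W}^\sharp=W^\sharp\ltimes M^\sharp$ with $M^\sharp$ stable under $W^\sharp$ (it is the coroot lattice of $\Phi_0^\sharp$), every element is uniquely $t_{\alpha^\sharp}w$ with $\alpha^\sharp\in M^\sharp$, $w\in W^\sharp$, and $\epsilon(t_{\alpha^\sharp}w)=\epsilon(w)$ since translations have trivial sign. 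As each $t_{\alpha^\sharp}$ is a ring automorphism of $\widehat{\cR}$, regrouping the sum over $\widehat{W}^\sharp$ (which converges, by the degree estimate in the next paragraph) yields $\sum_{w\in\widehat{W}^\sharp}\epsilon(w)w(\widehat{b})=\sum_{\alpha^\sharp\in M^\sharp}t_{\alpha^\sharp}(e^{\widehat{\rho}}R)$, which is the second equality.

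The substantive step is the first equality $\sum_{\alpha^\sharp\in M^\sharp}t_{\alpha^\sharp}(e^{\widehat{\rho}}R)=e^{\widehat{\rho}}\widehat{R}$; denote the two sides $B$ and $A$. First I would check $B\in\widehat{\cR}$: from the formula defining $t_{\alpha^\sharp}$, the least $\delta$-degree appearing in $t_{\alpha^\sharp}(e^{\widehat{\rho}}R)$ is $\tfrac14(\alpha^\sharp,\alpha^\sharp)+O(\text{linear in }\alpha^\sharp)$, which tends to $+\infty$ because $(\cdot,\cdot)$ is positive definite on $M^\sharp\otimes\R$ (the $C_m$ coroot lattice, by the computed values of the form); so each $q$-coefficient of $B$ is a finite sum and its support lies in the required cone. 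Next, $B$ is $\widehat{W}^\sharp$-anti-invariant: it is invariant under every $t_{\beta^\sharp}$ by reindexing the lattice sum, and for $v\in W^\sharp$ one combines $vt_{\alpha^\sharp}v^{-1}=t_{v(\alpha^\sharp)}$, the $W^\sharp$-stability of $M^\sharp$, and the $W^\sharp$-anti-invariance $v(e^{\widehat{\rho}}R)=\epsilon(v)e^{\widehat{\rho}}R$ (a formal consequence of \cref{finite-denominator}) to get $v(B)=\epsilon(v)B$. On the other side, $A=e^{\widehat{\rho}}\widehat{R}$ is $\widehat{W}^\sharp$-anti-invariant by the standard Kac--Moody computation (see \cite{Kac}): for a simple reflection $s$ of $\widehat{W}^\sharp$ (a reflection in a real, necessarily even non-isotropic, root of the affine $C_m$-system) one checks $s(e^{\widehat{\rho}}\widehat{R})=-e^{\widehat{\rho}}\widehat{R}$, using that $s$ negates that root and permutes the remaining positive even roots, and likewise permutes the positive odd roots up to finitely many sign changes because it preserves the $\Z/2$-grading and the bilinear form, the resulting monomial factor cancelling the shift of $e^{\widehat{\rho}}$. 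Finally, in both $A$ and $B$ the monomial $e^{\widehat{\rho}}$ occurs with coefficient $1$ and is the unique maximal weight of the support; by the standard uniqueness of a $\widehat{W}^\sharp$-anti-invariant element of $\widehat{\cR}$ with support in $\widehat{\rho}-\sum_{i=0}^{2m}\Z_{\geq 0}\alpha_i$ and prescribed leading term (again \cite{Kac}), this forces $A=B$.

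I expect the genuine difficulty to sit in the anti-invariance of $e^{\widehat{\rho}}\widehat{R}$ together with the uniqueness step: this is exactly the place where, for a Lie \emph{super}algebra, the Weyl group is usually ``too small'', which is the reason the correction factor $\prod_{\beta\in S}(1+e^{-\beta})$ appears at all; but since we are allowed to invoke \cref{finite-denominator}, all of that difficulty is already absorbed into the statement that $e^{\widehat{\rho}}R$ is $W^\sharp$-anti-invariant with leading term $e^{\widehat{\rho}}$, and what remains is the convergence bookkeeping and the classical uniqueness lemma. If the purely formal argument became unwieldy, the alternative I would pursue is representation-theoretic: apply the Weyl--Kac character formula for $\widehat{\fg}$ to the trivial integrable module $L(0)$, whose character is $1$, and rearrange; this produces \eqref{affine-denominator} at once, at the price of relocating the difficulty into the character formula.
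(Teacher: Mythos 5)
First, a point of comparison: the paper does not prove this theorem at all — it is quoted from Gorelik \cite{Gorelik11} — so there is no in-paper argument to measure yours against. On its own merits, your treatment of the \emph{second} equality in \eqref{affine-denominator} is fine and genuinely routine: $\widehat{\rho}-\rho=\tfrac12\gamma$, the $W^\sharp$-invariance of $\gamma$ and $\delta$, the decomposition $\widehat{W}^\sharp=W^\sharp\ltimes M^\sharp$ with $\epsilon$ trivial on translations, and \cref{finite-denominator} do combine to give $\sum_{w\in\widehat{W}^\sharp}\epsilon(w)w(\widehat{b})=\sum_{\alpha^\sharp\in M^\sharp}t_{\alpha^\sharp}(e^{\widehat{\rho}}R)$.

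The first equality is where your argument has a genuine gap, and it is exactly the gap that kept this statement a conjecture from 1994 until 2011. The ``standard uniqueness of a $\widehat{W}^\sharp$-anti-invariant element of $\widehat{\cR}$ with prescribed support and leading term'' that you invoke in the last step does not exist. Even in the ordinary affine case, uniqueness is not a consequence of support and leading coefficient alone: one also needs the Casimir (norm) constraint $(\lambda,\lambda)=(\widehat{\rho},\widehat{\rho})$ on the support, together with regularity of $\widehat{\rho}$, to force the support onto the single orbit of $\widehat{\rho}$. You omit the norm constraint entirely, and without it the set of anti-invariant elements with the stated support and leading coefficient $1$ is an infinite-dimensional affine space (one may add $\sum_{w}\epsilon(w)e^{w(\mu)}$ for any regular $\mu$ deep in the cone). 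Worse, even with the norm constraint the argument collapses in the super setting: the roots $\beta\in S$ are isotropic and orthogonal to $\widehat{\rho}$, so every weight $\widehat{\rho}-\sum_j n_j\beta_j$ with $n_j\ge 0$ has the same norm as $\widehat{\rho}$ and lies in the support cone without being $\widehat{W}^\sharp$-conjugate to $\widehat{\rho}$ — which is precisely why each $w$-term on the right of \eqref{affine-denominator} is itself an infinite series rather than a single exponential. Determining the coefficients along these infinitely many orbits is the actual content of Gorelik's proof, which does not follow the affinization-plus-uniqueness route. So the difficulty is not ``absorbed into'' \cref{finite-denominator} as you hope: anti-invariance together with the finite identity genuinely underdetermines $e^{\widehat{\rho}}\widehat{R}$, and your final step asserts a lemma that is false in the generality you need.
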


Let $\xi=\sum_{i=1}^m\varepsilon_i$ so that $(\alpha_j,  \xi)=(-1)^j/2$ for all $j=1,  \ldots,  2m$.
Applying $t_\xi$ to both sides of \eqref{affine-denominator} and using \eqref{denominator},  we obtain
    \begin{equation}\label{t_xi}
    t_\xi(e^{\widehat{\rho}}\widehat{R}) 
    = \frac{1}{2^mm!}\sum_{\alpha^\sharp\in M^\sharp}\sum_{w\in W} 
    \epsilon(w)wt_{\xi+\alpha^\sharp}
    \left( \frac{e^{\widehat{\rho}}}{\prod_{\beta\in S}(1+e^{-\beta})} \right).
    \end{equation}

Write $\alpha^\sharp\in M^\sharp$ as $\alpha^\sharp=\sum_{i=1}^mk_i\cdot 2\varepsilon_i=\sum_{i=1}^m2(k_1+\cdots+k_i)(\alpha_{2i}+\alpha_{2i+1})$ with $\vec{k}=(k_1,  \ldots,  k_m)\in\Z^m$.
We summarize some equations which we use to compute the right-hand side of \eqref{t_xi}.
Each of them is a consequence of an easy calculation.
\begin{lemma}
We have the following equations:
\begin{itemize}
\item $t_{\xi+\alpha^\sharp}(\alpha_{2j-1})=\alpha_{2j-1}+(k_j+1/2)\delta$. 
\item $t_{\xi+\alpha^\sharp}(\widehat{\rho})=\widehat{\rho}+(\xi+\alpha^\sharp)/2-(3m/8+(k_1+\cdots+k_m)+(\alpha^\sharp,  \alpha^\sharp)/4)\delta$.
\item $w(\delta)=\delta$ and $w(\gamma)=\gamma$ for $w\in W$.
\item $(\alpha^\sharp,  \alpha^\sharp)=2\sum_{i=1}^m k_i^2$.
\end{itemize}
\end{lemma}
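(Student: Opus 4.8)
The plan is to verify the four equations by direct computation from the definition of $t_\alpha$, the explicit simple root $\alpha_{2j-1}=\delta_j-\varepsilon_j$, the inner-product values of the preceding lemma, and the identifications $\delta\leftrightarrow K$, $\gamma\leftrightarrow d$ under $\widehat{\fh}^\ast\xrightarrow{\sim}\widehat{\fh}$. I would first record the auxiliary facts used repeatedly: for every $\mu\in\widehat{\fh}^\ast$ one has $(\delta,\mu)=\mu(K)$ and $(\gamma,\mu)=\mu(d)$, so any $\lambda\in\fh^\ast$ (viewed inside $\widehat{\fh}^\ast$) satisfies $(\delta,\lambda)=(\gamma,\lambda)=0$, while $\gamma(K)=1$. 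In particular $\alpha_{2j-1}(K)=0$ and hence $\widehat{\rho}(K)=\tfrac12\gamma(K)=\tfrac12$.

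Next I would expand $\alpha^\sharp=\sum_{i=1}^m 2k_i\varepsilon_i$ and, using that $\{\varepsilon_i,\delta_j\}$ is orthogonal with $(\varepsilon_i,\varepsilon_i)=\tfrac12$, read off at once
\[
(\alpha^\sharp,\alpha^\sharp)=\sum_i 4k_i^2(\varepsilon_i,\varepsilon_i)=2\sum_i k_i^2,\qquad (\xi,\alpha^\sharp)=\sum_i k_i,\qquad (\xi,\xi)=\tfrac{m}{2},
\]
which already gives the fourth equation. From $\alpha_{2j-1}=\delta_j-\varepsilon_j$ the same orthogonality yields $(\alpha_{2j-1},\alpha^\sharp)=-2k_j(\varepsilon_j,\varepsilon_j)=-k_j$, and the normalization of $\xi$ gives $(\alpha_{2j-1},\xi)=(-1)^{2j-1}/2=-\tfrac12$. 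For the third equation, $W$ is generated by the reflections $s_\alpha$, $\alpha\in\Phi_0\subset\fh^\ast$, acting on $\widehat{\fh}^\ast$ by $s_\alpha(\mu)=\mu-\tfrac{2(\mu,\alpha)}{(\alpha,\alpha)}\alpha$; since $(\delta,\alpha)=\alpha(K)=0$ and $(\gamma,\alpha)=\alpha(d)=0$, each generator fixes $\delta$ and $\gamma$, hence so does every $w\in W$.

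For the first equation, $\alpha_{2j-1}(K)=0$ makes the definition of $t_\beta$ collapse to $t_\beta(\alpha_{2j-1})=\alpha_{2j-1}-(\alpha_{2j-1},\beta)\delta$; taking $\beta=\xi+\alpha^\sharp$ and inserting $(\alpha_{2j-1},\xi)=-\tfrac12$, $(\alpha_{2j-1},\alpha^\sharp)=-k_j$ gives $t_{\xi+\alpha^\sharp}(\alpha_{2j-1})=\alpha_{2j-1}+(k_j+\tfrac12)\delta$. For the second equation I would apply $t_{\xi+\alpha^\sharp}$ to $\widehat{\rho}$ using $\widehat{\rho}(K)=\tfrac12$:
\[
t_{\xi+\alpha^\sharp}(\widehat{\rho})=\widehat{\rho}+\tfrac12(\xi+\alpha^\sharp)-\Big((\widehat{\rho},\xi+\alpha^\sharp)+\tfrac14(\xi+\alpha^\sharp,\xi+\alpha^\sharp)\Big)\delta,
\]
and then evaluate the $\delta$-coefficient termwise: since $(\gamma,\xi+\alpha^\sharp)=0$ we get $(\widehat{\rho},\xi+\alpha^\sharp)=-\tfrac12\sum_{j=1}^m(\alpha_{2j-1},\xi+\alpha^\sharp)=\tfrac{m}{4}+\tfrac12\sum_j k_j$, while $(\xi+\alpha^\sharp,\xi+\alpha^\sharp)=(\xi,\xi)+2(\xi,\alpha^\sharp)+(\alpha^\sharp,\alpha^\sharp)=\tfrac{m}{2}+2\sum_j k_j+2\sum_i k_i^2$. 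Adding $\tfrac14$ of the latter to the former and rewriting $\tfrac12\sum_i k_i^2=\tfrac14(\alpha^\sharp,\alpha^\sharp)$ gives the $\delta$-coefficient $\tfrac{3m}{8}+(k_1+\cdots+k_m)+\tfrac14(\alpha^\sharp,\alpha^\sharp)$, which is exactly the second equation. There is no genuine obstacle here; the only point requiring care is the bookkeeping of the fractions $\tfrac12,\tfrac14,\tfrac18$ coming from $\widehat{\rho}(K)=\tfrac12$ together with the normalization $(\varepsilon_i,\varepsilon_i)=\tfrac12$ — this is precisely the source of the slightly unusual constant $\tfrac{3m}{8}$, so I would double-check that single computation.
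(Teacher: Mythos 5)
Your computation is correct and is exactly the direct verification the paper leaves to the reader (the lemma is stated there with no proof beyond "an easy calculation"). All the key inner products — $(\alpha_{2j-1},\xi)=-\tfrac12$, $(\alpha_{2j-1},\alpha^\sharp)=-k_j$, $(\xi,\xi)=\tfrac m2$, $(\xi,\alpha^\sharp)=\sum_i k_i$ — and the use of $\widehat{\rho}(K)=\tfrac12$ check out, and the $\delta$-coefficient bookkeeping correctly produces $\tfrac{3m}{8}+\sum_j k_j+\tfrac14(\alpha^\sharp,\alpha^\sharp)$.
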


For an index set $J\subset\{1,  \ldots,  m\}$,  define $\Z^m_J$ by
    \[
    \Z^m_J = \left\{\vec{k}=(k_1,  \ldots,  k_m)\in\Z^m \, \middle|\,
    J=\{j \mid k_j<0\} \right\}.
    \]
The right-hand side of \eqref{t_xi} becomes 
    \begin{align*}
    & \frac{q^{\frac38m}}{2^mm!}
    \sum_{J\subset\{1,\ldots,m\}}\sum_{\vec{k}\in\Z^m_J}
    q^{\sum_{j=1}^m(\frac12k_j^2+k_j)} \\
    & \times \sum_{w\in W}\epsilon(w)w\left( 
    e^{\widehat{\rho}+\frac12(\xi+\alpha^\sharp)} 
     \sum_{\vec{r}\in(\Z_{\geq0})^m}
    \prod_{j\in J}\left(q^{-(k_j+\frac12)}e^{\alpha_{2j-1}}
    (-q^{-(k_j+\frac12)}e^{\alpha_{2j-1}})^{r_j} \right)\cdot
    \prod_{j\not\in J} (-q^{k_j+\frac12}e^{-\alpha_{2j-1}})^{r_j} \right) \\
    & = \frac{q^{\frac38m}}{2^mm!}
    \sum_{J\subset\{1,\ldots,m\}}(-1)^{|J|}
    \sum_{\vec{k},  \vec{r}\in\Z^m_J} (-1)^{\sum_{j=1}^m r_j}
    q^{\frac12\sum_{j=1}^m (k_j^2+2k_j+2k_jr_j+r_j)}  
    \sum_{w\in W}\epsilon(w)w\left( 
    e^{\widehat{\rho}+\frac12(\xi+\alpha^\sharp)
    -\sum_{j=1}^m r_j\alpha_{2j-1}} \right),
    \end{align*}
where we set $q=e^{-\delta}$.
On the other hand,  since $t_\xi(\widehat{\rho})=\widehat{\rho}+\xi/2-3m/8\cdot\delta$,  $t_\xi(\delta)=\delta$ and $t_\xi(\beta)=\beta-(\beta,  \xi)\delta$ for $\beta\in\Phi$,  the left-hand side of \eqref{t_xi} becomes 
    \[
    q^{\frac38m}e^{\frac12(\gamma+\xi-\sum_{j=1}^m\alpha_{2j-1})}
    \left( \prod_{n=1}^\infty (1-q^n)^{2m} \right)
    \frac{\prod_{\beta\in\Phi_0^+}
    \left(\prod_{n=0}^\infty(1-q^{n-(\beta,  \xi)}e^{-\beta})\cdot
    \prod_{n=1}^\infty(1-q^{n+(\beta,  \xi)}e^{\beta}) \right)}
    {\prod_{\beta\in\Phi_1^+}
    \left(\prod_{n=0}^\infty(1+q^{n-(\beta,  \xi)}e^{-\beta})\cdot
    \prod_{n=1}^\infty(1+q^{n+(\beta,  \xi)}e^{\beta}) \right)}.
    \]
Hence we obtain 
    \begin{multline*}\label{heart}\tag{$\heartsuit$}{}
    \begin{split}
    e^{\frac12(\xi-\sum_{j=1}^m\alpha_{2j-1})}
    \left( \prod_{n=1}^\infty (1-q^n)^{2m} \right)
    \frac{\prod_{\beta\in\Phi_0^+}
    \left(\prod_{n=0}^\infty(1-q^{n-(\beta,  \xi)}e^{-\beta})\cdot
    \prod_{n=1}^\infty(1-q^{n+(\beta,  \xi)}e^{\beta}) \right)}
    {\prod_{\beta\in\Phi_1^+}
    \left(\prod_{n=0}^\infty(1+q^{n-(\beta,  \xi)}e^{-\beta})\cdot
    \prod_{n=1}^\infty(1+q^{n+(\beta,  \xi)}e^{\beta}) \right)} \\
    = \frac{1}{2^mm!}
    \sum_{J\subset\{1,\ldots,m\}}(-1)^{|J|}
    \sum_{\vec{k},  \vec{r}\in\Z^m_J} (-1)^{\sum_{j=1}^m r_j}
    q^{\frac12\sum_{j=1}^m (k_j^2+2k_j+2k_jr_j+r_j)}  \\
    \times \sum_{w\in W}\epsilon(w)w\left( 
    e^{\frac12(\xi+\alpha^\sharp)
    -\sum_{j=1}^m (r_j+\frac12)\alpha_{2j-1}} \right).
    \end{split}
    \end{multline*}

Let $R_0=\prod_{\beta\in\Phi_0^+}(1-e^{-\beta})$ be the denominator of $\fg_0$.
From now on,  we divide both sides of \eqref{heart} by $R_0$ and take the limit $e^{-\alpha_j}\to1$,  ($j=1,  \ldots,  2m$).

First,  we deal with $\mathrm{RHS}$ of \eqref{heart}.

\begin{lemma}\label{lem:RHS}
We have
    \begin{multline*}
    \lim_{\substack{e^{-\alpha_{j}}\to1 \\ j=1,  \ldots,  2m}}
    R_0^{-1}  \sum_{w\in W} \epsilon(w)w \left( e^
    {\frac12(\xi+\alpha^\sharp)-\sum_{j=1}^m (r_j+\frac12)\alpha_{2j-1}}\right) \\
    \quad  = \frac{(-1)^m}{\left(\prod_{j=1}^m(2j-1)!\right)^2} 
    \left(\prod_{i=1}^m (1+2r_i)(1+r_i+k_i)\right) 
    \left(\prod_{1\leq i<j\leq m} (r_i-r_j)(1+r_i+r_j)\right) \\
    \times  
    \left(\prod_{1\leq i<j\leq m}(r_i+k_i-r_j-k_j)(2+r_i+k_i+r_j+k_j) \right).
    \end{multline*}
\end{lemma}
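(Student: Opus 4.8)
The plan is to evaluate the Weyl-group sum by recognizing it, after dividing by $R_0$ and taking the limit $e^{-\alpha_j}\to 1$, as a ratio of two alternating sums of exponentials — that is, as an instance of the Weyl dimension formula for the two factors $\sp(2m)$ and $\so(2m+1)$ of $\fg_0$. Concretely, I would first use $W\simeq W_C\times W_B$, where $W_C$ is the Weyl group of the $C_m$-system on the $\varepsilon_i$ and $W_B$ is the Weyl group of the $B_m$-system on the $\delta_j$, to factor the sum $\sum_{w\in W}\epsilon(w)\,w(e^{\lambda})$ as a product of a $C_m$-alternating sum in the $\varepsilon$-variables and a $B_m$-alternating sum in the $\delta$-variables. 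Here $\lambda = \tfrac12(\xi+\alpha^\sharp) - \sum_{j=1}^m(r_j+\tfrac12)\alpha_{2j-1}$; since $\alpha_{2j-1} = \delta_j - \varepsilon_j$, the weight $\lambda$ splits cleanly into a part involving only the $\varepsilon_i$ and a part involving only the $\delta_j$.

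Next I would express the coefficients of $\varepsilon_i$ and $\delta_j$ in $\lambda$ explicitly. Writing $\lambda = \sum_i a_i \varepsilon_i + \sum_j b_j \delta_j$, one finds $a_i = \tfrac12 + k_i + (r_i+\tfrac12)$ and $b_j = -(r_j+\tfrac12)$, up to signs and a shift I would pin down against the conventions fixed in \cref{Lie-superalgebra}. The denominator $R_0 = \prod_{\beta\in\Phi_0^+}(1-e^{-\beta})$ likewise factors as $R_0^C \cdot R_0^B$, the Weyl denominators of the $C_m$- and $B_m$-subsystems. So the limit we want is
\[
\lim \frac{\sum_{w\in W_C}\epsilon(w)w(e^{\lambda_\varepsilon})}{R_0^C}\cdot
\lim \frac{\sum_{w\in W_B}\epsilon(w)w(e^{\lambda_\delta})}{R_0^B},
\]
and each factor is, by the Weyl denominator identity followed by L'Hôpital / the standard "principal specialization" argument, the Weyl dimension of the (possibly virtual) highest-weight module with highest weight $\lambda_\varepsilon$ (resp. $\lambda_\delta$) minus the relevant $\rho$. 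That dimension is the product over positive roots $\beta$ of $\tfrac{(\lambda+\rho,\beta)}{(\rho,\beta)}$. Plugging in the explicit positive roots of $C_m$ (namely $\varepsilon_i\pm\varepsilon_j$ and $2\varepsilon_p$) and of $B_m$ (namely $\delta_i\pm\delta_j$ and $\delta_q$), together with the inner products from the Lemma, produces exactly the advertised products: the $C_m$ factor gives $\prod_{i<j}(r_i+k_i-r_j-k_j)(2+r_i+k_i+r_j+k_j)\cdot\prod_i(1+r_i+k_i)$ type terms together with the Vandermonde-like $\prod_i(\text{something})$ from the $2\varepsilon_p$ roots, and the $B_m$ factor gives $\prod_{i<j}(r_i-r_j)(1+r_i+r_j)\cdot\prod_i(1+2r_i)$ from the $\delta_i\pm\delta_j$ and $\delta_q$ roots. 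The normalizing constants $\prod_{\beta\in\Phi_0^+}(\rho_0,\beta)$ collect into $\bigl(\prod_{j=1}^m(2j-1)!\bigr)^2$, and the sign $(-1)^m$ comes from the half-integer shift in $\lambda$ combined with the sign of $(\delta_j,\delta_j) = -1/2$ (the $B_m$-side "dimension" is computed with respect to a negative-definite form, which is the source of the overall sign).

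The main obstacle I anticipate is bookkeeping, not conceptual: getting every shift and sign exactly right. In particular, the weight $\lambda$ is not dominant and is not even the shift of a genuine highest weight, so the alternating sum $\sum_w\epsilon(w)w(e^\lambda)$ may vanish or pick up a sign from a Weyl element bringing $\lambda+\rho_0$ to the dominant chamber; I would need to argue that the resulting rational function in the $r_i,k_i$ is the "correct" analytic continuation of the dimension polynomial, which is automatic once one works with the Weyl denominator identity as a polynomial identity rather than a representation-theoretic statement. The second delicate point is the $B_m$-versus-$C_m$ normalization: because $(\varepsilon_i,\varepsilon_i)=1/2$ while $(\delta_j,\delta_j)=-1/2$, one must be careful whether "$2\varepsilon_p$" or "$\varepsilon_p$" is a root and track the factor of $2$ in $(2\varepsilon_p,2\varepsilon_p)=2$; this is exactly what makes the two $\bigl(\prod(2j-1)!\bigr)$ factors appear with the same exponent despite $C_m$ and $B_m$ being non-isomorphic. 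Once these are handled, the computation is a direct substitution into the Weyl dimension formula.
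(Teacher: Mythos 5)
Your proposal is correct and follows essentially the same route as the paper: both evaluate the limit via the Weyl dimension formula (principal specialization) applied to $\Phi_0^+$, reducing to the product $\prod_{\beta\in\Phi_0^+}(\rho_0+\lambda,\beta)/(\rho_0,\beta)$ and then computing the inner products root by root. Your explicit factorization into the $C_m$ and $B_m$ pieces is implicit in the paper's root-by-root computation, and your account of where the $(-1)^m$ and the $\bigl(\prod_{j=1}^m(2j-1)!\bigr)^2$ come from matches the paper's calculation.
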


\begin{proof}
Let $\rho_0=\frac12\sum_{\beta\in\Phi_0^+}\beta$ be the Weyl vector of $\Phi_0^+$.  
Set 
    \[
    \lambda = -\rho_0
    +\tfrac12(\xi+\alpha^\sharp)-\sum_{j=1}^m (r_j+\tfrac12)\alpha_{2j-1}
    = -\rho_0+\sum_{i=1}^m(k_i+\tfrac12)\varepsilon_i
    -\sum_{j=1}^m(r_j+\tfrac12)\alpha_{2j-1}.
    \]
From the Weyl dimension formula (and its proof,  \cite[Corollary 24.3]{Humphreys}),  we obtain
    \[
    \lim_{\substack{e^{-\alpha_{j}}\to1 \\ j=1,  \ldots,  2m}}
    R_0^{-1} \sum_{w\in W} \epsilon(w)w(e^{\rho_0+\lambda}) \quad
    = \prod_{\beta\in\Phi_0^+}\frac{(\rho_0+\lambda,  \beta)}
    {(\rho_0,  \beta)}.
    \]

First,  $(\rho_0+\lambda,  \beta)$ for $\beta\in\Phi_0^+$ is computed as follows:
\begin{itemize}
\item $(\rho_0+\lambda,  \varepsilon_i-\varepsilon_j)=(r_i+k_i-r_j-k_j)/2$ for $1\leq i<j\leq m$.
\item $(\rho_0+\lambda,  \varepsilon_i+\varepsilon_j)=(2+r_i+k_i+r_j+k_j)/2$ for $1\leq i<j\leq m$.
\item $(\rho_0+\lambda,  2\varepsilon_p)=1+r_p+k_p$ for $1\leq p\leq m$.

\item $(\rho_0+\lambda,  \delta_i-\delta_j)=(r_i-r_j)/2$ for $1\leq i<j\leq m$.
\item $(\rho_0+\lambda,  \delta_i+\delta_j)=(1+r_i+r_j)/2$ for $1\leq i<j\leq m$.
\item $(\rho_0+\lambda,  \delta_q)=(1+2r_q)/4$ for $1\leq q\leq m$.
\end{itemize}
Hence we get
    \begin{align*}
    \prod_{\beta\in\Phi_0^+}(\rho_0+\lambda,  \beta)
    =2^{-|\Phi_0^+|} \left(\prod_{i=1}^m (1+2r_i)(1+r_i+k_i)\right) 
    \left(\prod_{1\leq i<j\leq m} (r_i-r_j)(1+r_i+r_j)\right) \\
    \times  
    \left(\prod_{1\leq i<j\leq m}(r_i+k_i-r_j-k_j)(2+r_i+k_i+r_j+k_j) \right).
    \end{align*}

On the other hand,  since we have $\rho_0=\sum_{p=1}^m (m+1-p)\varepsilon_p+\sum_{q=1}^m(m+1/2-q)\delta_q$,  the pairing $(\rho_0,  \beta)$ for $\beta\in\Phi_0^+$ is computed as 
\begin{itemize}
\item $(\rho_0,  \varepsilon_i-\varepsilon_j)=(j-i)/2$ and 
    \[
    \prod_{1\leq i<j\leq m}(\rho_0,  \varepsilon_i-\varepsilon_j)
    = 2^{-\frac{m(m-1)}{2}}\prod_{1\leq i<m}(m-i)! = 2^{-\frac{m(m-1)}{2}}\prod_{i=1}^{m-1}i!.
    \]
\item $(\rho_0,  \varepsilon_i+\varepsilon_j)=(2m+2-i-j)/2$ and
    \[
    \prod_{1\leq i<j\leq m}(\rho_0,  \varepsilon_i+\varepsilon_j)
    = 2^{-\frac{m(m-1)}{2}}\prod_{1\leq i<m}\frac{(2m-2i+1)!}{(m-i+1)!}
    = 2^{-\frac{m(m-1)}{2}}\prod_{i=1}^{m-1}\frac{(2i+1)!}{(i+1)!}.
    \]
\item $(\rho_0,  2\varepsilon_p)=m+1-p$ and $\prod_{p=1}^m(\rho_0,  2\varepsilon_p)=m!$.

\item $(\rho_0,  \delta_i-\delta_j)=(i-j)/2$ and $\prod_{1\leq i<j\leq m}(\rho_0,  \delta_i-\delta_j)=(-2)^{-m(m-1)/2}\prod_{i=1}^{m-1}i!$.
\item $(\rho_0,  \delta_i+\delta_j)=(i+j-2m-1)/2$ and
    \[
    \prod_{1\leq i<j\leq m}(\rho_0,  \delta_i+\delta_j) 
    = (-2)^{-\frac{m(m-1)}{2}}\prod_{1\leq i<m}\frac{(2m-2i)!}{(m-i)!}
    = (-2)^{-\frac{m(m-1)}{2}}\prod_{i=1}^{m-1}\frac{(2i)!}{i!}.
    \]
\item $(\rho_0,  \delta_q)=(2q-2m-1)/4$ and
    \[
    \prod_{q=1}^m(\rho_0,  \delta_q) = (-4)^{-m}\prod_{i=1}^m(2i-1) 
    = (-4)^{-m}(2m)!\cdot \prod_{i=1}^m(2i)^{-1}.
    \]
\end{itemize}
Hence we have $\prod_{\beta\in\Phi_0^+}(\rho_0,  \beta)=(-1)^m2^{-2m^2}\left(\prod_{i=1}^m(2j-1)!\right)^2=(-1)^m2^{-|\Phi_0^+|}\left(\prod_{j=1}^m(2j-1)!\right)^2$.
This completes the proof.
\end{proof}

\begin{corollary}
We have
    \begin{align}\label{eq:RHS}
    \begin{split}
    & \lim_{\substack{e^{-\alpha_{j}}\to1 \\ j=1,  \ldots,  2m}} 
    \frac{\mathrm{RHS}\text{ of }\eqref{heart}}{R_0}  \\  
    & \quad = \frac{(-1)^m}{m!\left(\prod_{j=1}^m(2j-1)!\right)^2} 
    \sum_{\vec{k},  \vec{r}\in(\Z_{\geq0})^m} (-1)^{\sum_{j=1}^m r_j}
    q^{\frac12\sum_{j=1}^m (k_j^2+2k_j+2k_jr_j+r_j)}  
    \left(\prod_{i=1}^m (1+2r_i)(1+r_i+k_i)\right) \\ 
    & \hspace{40pt} \times  
    \left(\prod_{1\leq i<j\leq m}(r_i-r_j)(1+r_i+r_j)(r_i+k_i-r_j-k_j)(2+r_i+k_i+r_j+k_j) \right).
    \end{split}
    \end{align}
\end{corollary}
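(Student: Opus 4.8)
The plan is to pass to the limit term by term in the double sum $\mathrm{RHS}$ of \eqref{heart}, apply \cref{lem:RHS} to each Weyl group sum, and collapse the auxiliary sum over the subsets $J\subseteq\{1,\dots,m\}$ by an explicit involution of the summation variables. First I would record that, writing $g(k,r)\coloneqq k^2+2k+2kr+r=(k+\tfrac12)^2+2(k+\tfrac12)(r+\tfrac12)-\tfrac34$, one has $g\ge 0$ on $(\Z_{\ge0})^2$ (hence, by the symmetry noted below, also on $(\Z_{<0})^2$), with $\{g\le N\}$ finite for each $N$; since $\vec{k},\vec{r}\in\Z^m_J$ means precisely that both $\vec{k}$ and $\vec{r}$ have negative coordinates exactly at the indices in $J$, the $q$-exponent $\tfrac12\sum_j g(k_j,r_j)$ is nonnegative on every $\Z^m_J\times\Z^m_J$ and each power of $q$ in $\mathrm{RHS}$ of \eqref{heart} thus receives contributions from only finitely many $(\vec{k},\vec{r})$. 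As each inner sum $\sum_{w\in W}\epsilon(w)w(e^{\mu})$ is $W$-antiinvariant, hence divisible by $e^{\rho_0}R_0$, the quotient $R_0^{-1}\sum_{w\in W}\epsilon(w)w(e^{\mu})$ has a well-defined value at $e^{-\alpha_j}=1$, so the limit in \eqref{eq:RHS} may legitimately be evaluated term by term. It therefore suffices to prove, for
\[
S(\vec{k},\vec{r})\coloneqq(-1)^{\sum_j r_j}\,q^{\frac12\sum_j g(k_j,r_j)}\sum_{w\in W}\epsilon(w)w\!\left(e^{\frac12(\xi+\alpha^\sharp)-\sum_j(r_j+\frac12)\alpha_{2j-1}}\right),
\]
the combinatorial reduction $\frac{1}{2^m}\sum_{J\subseteq\{1,\dots,m\}}(-1)^{|J|}\sum_{\vec{k},\vec{r}\in\Z^m_J}S(\vec{k},\vec{r})=\sum_{\vec{k},\vec{r}\in(\Z_{\ge0})^m}S(\vec{k},\vec{r})$, after which one divides by $R_0$, takes the limit, and inserts \cref{lem:RHS}.

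The heart of the argument is the involution $\sigma_J$ of $\Z^m\times\Z^m$ sending $(k_j,r_j)\mapsto(-1-k_j,-1-r_j)$ for each $j\in J$ and fixing all coordinates with index outside $J$; it restricts to a bijection $\Z^m_J\times\Z^m_J\xrightarrow{\ \sim\ }(\Z_{\ge0})^m\times(\Z_{\ge0})^m$, and I claim $S(\sigma_J(\vec{k},\vec{r}))=(-1)^{|J|}S(\vec{k},\vec{r})$. This needs three checks. The $q$-exponent is unchanged because $g(k,r)=(k+\tfrac12)^2+2(k+\tfrac12)(r+\tfrac12)-\tfrac34$ is manifestly invariant under $(k,r)\mapsto(-1-k,-1-r)$, which negates $k+\tfrac12$ and $r+\tfrac12$. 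The sign $(-1)^{\sum_j r_j}$ acquires precisely $(-1)^{|J|}$, since each $r_j$ with $j\in J$ is replaced by $-1-r_j$. Finally the Weyl group sum is invariant: the weight $\mu\coloneqq\tfrac12(\xi+\alpha^\sharp)-\sum_j(r_j+\tfrac12)\alpha_{2j-1}$ equals $\sum_i(k_i+r_i+1)\varepsilon_i-\sum_j(r_j+\tfrac12)\delta_j$ (using $\tfrac12(\xi+\alpha^\sharp)=\sum_i(k_i+\tfrac12)\varepsilon_i$ and $\alpha_{2j-1}=\delta_j-\varepsilon_j$), and $\sigma_J$ carries it to $w_J(\mu)$, where $w_J\in W$ is the element negating $\varepsilon_j$ and $\delta_j$ for all $j\in J$; as $w_J$ is a product of $2|J|$ reflections, $\epsilon(w_J)=(-1)^{2|J|}=1$, so reindexing $w\mapsto ww_J$ gives $\sum_{w\in W}\epsilon(w)w(e^{w_J(\mu)})=\sum_{w\in W}\epsilon(w)w(e^{\mu})$. (Equivalently, after the limit one checks that the polynomial $P$ of \cref{lem:RHS} is $\sigma_J$-invariant: with $t_i=r_i+\tfrac12$ and $u_i=r_i+k_i+1$ one has $P=2^m\bigl(\prod_i t_i\bigr)\bigl(\prod_{i<j}(t_i^2-t_j^2)\bigr)\bigl(\prod_i u_i\bigr)\bigl(\prod_{i<j}(u_i^2-u_j^2)\bigr)$, and $\sigma_J$ negates $t_j,u_j$ for $j\in J$ while fixing every $t_i^2,u_i^2$, so $\prod_i t_i$ and $\prod_i u_i$ each contribute $(-1)^{|J|}$ and these cancel.) Granting the claim, $\sum_{\vec{k},\vec{r}\in\Z^m_J}S=(-1)^{|J|}\sum_{\vec{k},\vec{r}\in(\Z_{\ge0})^m}S$ for every $J$, so summing $(-1)^{|J|}$ times this over the $2^m$ choices of $J$ yields $2^m\sum_{\vec{k},\vec{r}\in(\Z_{\ge0})^m}S$, which is the asserted reduction.

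Combining everything, $\mathrm{RHS}$ of \eqref{heart} $=\tfrac1{m!}\sum_{\vec{k},\vec{r}\in(\Z_{\ge0})^m}S(\vec{k},\vec{r})$; dividing by $R_0$, passing to the limit term by term, and inserting the value $R_0^{-1}\sum_{w\in W}\epsilon(w)w(e^{\mu})\to\tfrac{(-1)^m}{(\prod_{j=1}^m(2j-1)!)^2}P(\vec{k},\vec{r})$ from \cref{lem:RHS}, one obtains $\tfrac{(-1)^m}{m!(\prod_{j=1}^m(2j-1)!)^2}\sum_{\vec{k},\vec{r}\in(\Z_{\ge0})^m}(-1)^{\sum_j r_j}q^{\frac12\sum_j g(k_j,r_j)}P(\vec{k},\vec{r})$, which is exactly \eqref{eq:RHS} once $P$ is written out. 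I expect the one place needing genuine care to be the invariance of the Weyl group sum (equivalently of $P$) under $\sigma_J$---concretely, the cancellation of the two signs $(-1)^{|J|}$ coming from the ``$C_m$-type'' factor $\prod_i t_i$ and the ``$B_m$-type'' factor $\prod_i u_i$, or equivalently the identity $\epsilon(w_J)=1$; the justification of the term-by-term limit and the accounting of the constants $\tfrac1{2^m m!}$, $2^m$ and $\tfrac{(-1)^m}{(\prod(2j-1)!)^2}$ are routine.
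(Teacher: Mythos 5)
Your proposal is correct and follows essentially the same route as the paper: the paper likewise applies \cref{lem:RHS} termwise and then performs the change of variables $k_j\mapsto -1-k_j$, $r_j\mapsto -1-r_j$ for $j\in J$, noting that the summand picks up exactly $(-1)^{|J|}$ so that the $J$-sum collapses to the factor $2^m$. Your additional verifications (nonnegativity of the $q$-exponent, invariance of the Weyl-group sum via $\epsilon(w_J)=1$, and the sign bookkeeping in $P$) are accurate elaborations of the step the paper states without detail.
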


\begin{proof}
From \cref{lem:RHS},  we obtain
    \begin{align*}
    \lim_{\substack{e^{-\alpha_{j}}\to1 \\ j=1,  \ldots,  2m}} 
    \frac{\mathrm{RHS}\text{ of }\eqref{heart}}{R_0} 
    & \ = \frac{(-1)^m}{2^mm!\left(\prod_{j=1}^m(2j-1)!\right)^2}  
    \sum_{J\subset\{1,\ldots,m\}}(-1)^{|J|} \\ 
    & \quad \times \sum_{\vec{k},  \vec{r}\in\Z^m_J} (-1)^{\sum_{j=1}^m r_j}
    q^{\frac12\sum_{j=1}^m (k_j^2+2k_j+2k_jr_j+r_j)}  
    \left(\prod_{i=1}^m (1+2r_i)(1+r_i+k_i)\right) \\ 
    & \quad \times  
    \left(\prod_{1\leq i<j\leq m}(r_i-r_j)(1+r_i+r_j)(r_i+k_i-r_j-k_j)(2+r_i+k_i+r_j+k_j) \right).
    \end{align*}
Applying the change of variables $k_j\mapsto-k_j-1$ and $r_j\mapsto-r_j-1$ for $j\in J$,  the summand of the above equation is multiplied by $(-1)^{|J|}$ and the summation becomes over $\vec{k},  \vec{r}\in(\Z_{\geq0})^m$.
This concludes the proof.
\end{proof}

Next,  we deal with $\mathrm{LHS}$ of \eqref{heart}.
\begin{lemma}
    \begin{align}\label{eq:LHS}
    \lim_{\substack{e^{-\alpha_{j}}\to1 \\ j=1,  \ldots,  2m}} 
    \frac{\mathrm{LHS}\text{ of }\eqref{heart}}{R_0} 
    & = (-1)^{\frac{m(m+1)}{2}} q^{\frac12\cdot \frac{m(m-1)}{2}} \cdot 
    \left( \prod_{n=1}^\infty \frac{1-q^{\frac12\cdot 2n}}{1+q^{\frac12(2n-1)}} \right)^{2m(2m+1)}.
    \end{align}
\end{lemma}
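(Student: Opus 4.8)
The plan is to evaluate the limit directly by identifying, for each root $\beta$, which factors of the two infinite products on the left of \eqref{heart} degenerate as $e^{-\alpha_j}\to 1$ and pairing the vanishing ones against $R_0=\prod_{\beta\in\Phi_0^+}(1-e^{-\beta})$. First observe that $\xi-\sum_{j=1}^m\alpha_{2j-1}=2\xi-\sum_{j=1}^m\delta_j$ lies in the root lattice (each $2\varepsilon_i$ and each $\delta_j$ is a root), so the prefactor $e^{\frac12(\xi-\sum_{j=1}^m\alpha_{2j-1})}$, as well as every monomial $e^{\pm\beta}$ with $\beta$ a root, tends to $1$ in the limit, while $q=e^{-\delta}$ stays fixed with $|q|<1$. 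Granting that the limit may be taken termwise in the infinite products, it then suffices to set $e^{\pm\beta}=1$ everywhere and bookkeep the resulting powers of $q$.

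For the even part I would use the values of $(\beta,\xi)$ recorded before \eqref{heart}: for $\beta\in\Phi_0^+$ one has $(\beta,\xi)=1$ exactly for $\beta=\varepsilon_i+\varepsilon_j$ $(i<j)$ and $\beta=2\varepsilon_p$, and $(\beta,\xi)=0$ otherwise. A reindexing shows that for each fixed $\beta\in\Phi_0^+$ the product $\prod_{n=0}^\infty(1-q^{n-(\beta,\xi)}e^{-\beta})\cdot\prod_{n=1}^\infty(1-q^{n+(\beta,\xi)}e^{\beta})$ contains exactly one factor equal to $(1-e^{-\beta})$; cancelling it against the matching factor of $R_0$ and then letting $e^{\pm\beta}\to1$, the surviving factors yield $\prod_{n=1}^\infty(1-q^n)^2$ when $(\beta,\xi)=0$ and $\tfrac{1-q^{-1}}{1-q}\prod_{n=1}^\infty(1-q^n)^2=-q^{-1}\prod_{n=1}^\infty(1-q^n)^2$ when $(\beta,\xi)=1$. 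Since there are $\binom m2+m=\tfrac{m(m+1)}2$ roots of the latter kind among the $|\Phi_0^+|=2m^2$ positive even roots, the even part contributes $(-q^{-1})^{m(m+1)/2}\prod_{n=1}^\infty(1-q^n)^{4m^2}$.

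For the odd part every $\beta\in\Phi_1^+$ has $(\beta,\xi)=\tfrac12$ or $-\tfrac12$, and now no factor vanishes. The analogous reindexing gives, in the limit, $q^{-1/2}\prod_{n=1}^\infty(1+q^{n-1/2})^2$ when $(\beta,\xi)=\tfrac12$ and $\prod_{n=1}^\infty(1+q^{n-1/2})^2$ when $(\beta,\xi)=-\tfrac12$. Among the $|\Phi_1^+|=m(2m+1)$ positive odd roots there are $\tfrac{m(m-1)}2+m^2+m=\tfrac{m(3m+1)}2$ of the first kind, so the denominator contributes $q^{-m(3m+1)/4}\prod_{n=1}^\infty(1+q^{n-1/2})^{2m(2m+1)}$. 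Assembling these with the prefactor $\prod_{n=1}^\infty(1-q^n)^{2m}$ yields $(-1)^{m(m+1)/2}\,q^{-m(m+1)/2+m(3m+1)/4}\prod_{n=1}^\infty(1-q^n)^{2m+4m^2}\big/\prod_{n=1}^\infty(1+q^{n-1/2})^{2m(2m+1)}$; since $2m+4m^2=2m(2m+1)$ and $-\tfrac{m(m+1)}2+\tfrac{m(3m+1)}4=\tfrac{m(m-1)}4$, this is exactly the right-hand side of \eqref{eq:LHS}.

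The one point needing genuine care is exchanging the limit with the infinite products: one should check that for $e^{-\alpha_j}$ close to $1$ and $q$ in a fixed compact subset of the unit disc the products converge uniformly, so that the limit can be taken factor by factor, and that the reindexing in the second paragraph really isolates exactly one vanishing factor $(1-e^{-\beta})$ per $\beta\in\Phi_0^+$, so that division by $R_0$ introduces no spurious zero or pole. The rest is the elementary bookkeeping of $q$-powers and of the cardinalities of the relevant subsets of $\Phi_0^+$ and $\Phi_1^+$.
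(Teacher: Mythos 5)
Your proposal is correct and follows essentially the same route as the paper: compute $(\beta,\xi)$ case by case, cancel the single degenerate factor of each even-root product against the corresponding factor of $R_0$, pass to the limit $e^{\pm\beta}\to 1$ in each product, and count the roots with $(\beta,\xi)=1$ (namely $\tfrac{m(m+1)}{2}$ of them) and with $(\beta,\xi)=\tfrac12$ (namely $\tfrac{m(3m+1)}{2}$ of them) to assemble the powers of $q$ and the sign. The only cosmetic difference is that the paper factors out $-q^{-1}e^{-\beta}$ (resp. $q^{-1/2}e^{-\beta}$) before taking the limit rather than evaluating $\tfrac{1-q^{-1}}{1-q}$ afterwards; the bookkeeping and the final exponents agree.
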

\begin{proof}
Let $\beta\in\Phi^+$.
It is easy to see that 
    \[
    (\beta,  \xi) = 
        \begin{cases}
        1 & \text{$\beta\in\{\varepsilon_i+\varepsilon_j, \ (1\leq i<j\leq m),  \
         2\varepsilon_p,  \ (1\leq p\leq m)\}$},  \\
        \frac12 & \text{$\beta\in\{\varepsilon_i-\delta_j,  \ (1\leq i<j\leq m),  \ 
        \varepsilon_i+\delta_j, \ (1\leq i,  j\leq m),  \ 
        \varepsilon_p,  \ (1\leq p\leq m)\}$},  \\
        -\frac12 & \text{$\beta\in\{\delta_i-\varepsilon_j, \ (1\leq i\leq j\leq m)\}$},  \\
        0 & \text{otherwise}.
        \end{cases}
    \]
If $(\beta,  \xi)=0$,  then 
    \begin{align*}
    (1-e^{-\beta})^{-1}
    \prod_{n=0}^\infty(1-q^{n-(\beta,  \xi)}e^{-\beta})
    \prod_{n=1}^\infty(1-q^{n+(\beta,  \xi)}e^{\beta}) 
    & = \prod_{n=1}^\infty (1-q^ne^{-\beta})(1-q^ne^\beta) \\
    & \quad \underset{j=1,  \ldots,  2m}{\xrightarrow{e^{-\alpha_{j}}\to1}} \quad
    \left(\prod_{n=1}^\infty (1-q^n)\right)^2.
    \end{align*}
If $(\beta,  \xi)=1$,  then 
    \begin{multline*}
    (1-e^{-\beta})^{-1}
    \prod_{n=0}^\infty(1-q^{n-(\beta,  \xi)}e^{-\beta})
    \prod_{n=1}^\infty(1-q^{n+(\beta,  \xi)}e^{\beta}) 
    = (1-q^{-1}e^{-\beta}) \prod_{n=1}^\infty 
    (1-q^ne^{-\beta})(1-q^{n+1}e^{\beta}) \\
     = -q^{-1}e^{-\beta}
    \prod_{n=1}^\infty (1-q^ne^{-\beta})(1-q^ne^{\beta}) \
    \underset{j=1,  \ldots,  2m}{\xrightarrow{e^{-\alpha_{j}}\to1}} \
    -q^{-1} \left(\prod_{n=1}^\infty(1-q^n)\right)^2.
    \end{multline*}
If $(\beta,  \xi)=-1/2$,  then 
    \begin{multline*}
    \prod_{n=0}^\infty(1+q^{n-(\beta,  \xi)}e^{-\beta})
    \prod_{n=1}^\infty(1+q^{n+(\beta,  \xi)}e^{\beta})
    = \prod_{n=1}^\infty (1+q^{n-\frac12}e^{-\beta})
    (1+q^{n-\frac12}e^{\beta}) \\
    \underset{j=1,  \ldots,  2m}{\xrightarrow{e^{-\alpha_{j}}\to1}} \
     \left(\prod_{n=1}^\infty(1+q^{\frac12(2n-1)})\right)^2.
    \end{multline*}
If $(\beta,  \xi)=1/2$,  then 
    \begin{multline*}
    \prod_{n=0}^\infty(1+q^{n-(\beta,  \xi)}e^{-\beta})
    \prod_{n=1}^\infty(1+q^{n+(\beta,  \xi)}e^{\beta})
    = (1+q^{-\frac12}e^{-\beta})\prod_{n=1}^\infty (1+q^{n-\frac12}e^{-\beta})
    (1+q^{n+\frac12}e^{\beta}) \\
    = q^{-\frac12}e^{-\beta}\cdot 
    \prod_{n=1}^\infty(1+q^{\frac12(2n-1)}e^{-\beta})
    (1+q^{\frac12(2n-1)}e^{\beta}) \
    \underset{j=1,  \ldots,  2m}{\xrightarrow{e^{-\alpha_{j}}\to1}} \
     q^{-\frac12}\left(\prod_{n=1}^\infty(1+q^{\frac12(2n-1)})\right)^2.
    \end{multline*}

Note that $\#\{\beta\in\Phi_0 \mid (\beta,  \xi)=1\}=m(m+1)/2$ and $\#\{\beta\in\Phi_0 \mid (\beta,  \xi)=1/2\}=m(3m+1)/2$.
Thus we obtain
    \begin{align}
    \begin{split}
    \lim_{\substack{e^{-\alpha_{j}}\to1 \\ j=1,  \ldots,  2m}} 
    \frac{\mathrm{LHS}\text{ of }\eqref{heart}}{R_0} 
    & = \left( \prod_{n=1}^\infty \frac{1-q^n}{1+q^{\frac12(2n-1)}} \right)^{2m(2m+1)}  
    \cdot (-q^{-1})^{\frac{m(m+1)}{2}} \cdot 
    (q^{-\frac12})^{-\frac{m(3m+1)}{2}} \\ 
    & = (-1)^{\frac{m(m+1)}{2}} q^{\frac{m(m-1)}{4}} \cdot 
    \left( \prod_{n=1}^\infty \frac{1-q^n}{1+q^{\frac12(2n-1)}} \right)^{2m(2m+1)}.
    \end{split}
    \end{align}
\end{proof}

Replace $q^{1/2}$ by $-q$ in \eqref{eq:RHS} and \eqref{eq:LHS}.
Using \eqref{eq:triangle},  we obtain \cref{thm:Kac-Wakimoto-Conj}.

\section{Modular framework for Kac--Wakimoto's identity}\label{Modular-framework}

It is a classical fact that multiplying the generating function $\triangle(q)$ of triangular numbers by $q^{1/8}$ yields a modular form of weight $1/2$. This emphasizes that the right-hand side of \cref{thm:Kac-Wakimoto-Conj} is also a modular form. In this section, we introduce Vign\'{e}ras' theorem, which is essential for providing a framework to directly establish this modularity. The next section applies this framework to offer an alternative proof of \cref{thm:Kac-Wakimoto-Conj} as an identity between modular forms. To clarify this approach, we begin by briefly reviewing some known properties of the generating function $\triangle(q)$ on the left-hand side. 

\subsection{The generating function $\theta_\triangle(\tau)$}\label{generating-function}

Let $\bbH \coloneqq \{\tau \in \C \mid \Im(\tau) > 0\}$ denote the upper half-plane and set $q = e^{2\pi i\tau}$. For convenience in the subsequent discussion, we consider the transformed version
\[
	\theta_\triangle(\tau) \coloneqq q^{1/16} \triangle(q^{1/2}) = \frac{1}{2} \sum_{n \in \frac{1}{2} + \Z} q^{\frac{n^2}{4}},
\]
obtained by setting $q \mapsto q^{1/2}$. It is known that using the Dedekind eta function $\eta(\tau) = q^{1/24} \prod_{n=1}^\infty (1-q^n)$, it can be expressed as
\begin{align}\label{eq:theta-eta}
	\theta_\triangle(\tau) = \frac{\eta(\tau)^2}{\eta(\tau/2)},
\end{align}
which can be found in K\"{o}hler's monograph~\cite[(8.5)]{Kohler2011}. Under this notation, the following holds.

\begin{lemma}\label{lem:theta-triangle}
	The function $\theta_\triangle(\tau)$ is a modular form of weight $1/2$ on $\Gamma(2)$ with no zeros or poles on $\bbH$. Specifically, for generators $\smat{1 & 2 \\ 0 & 1}$ and $\smat{1 & 0 \\ 2 & 1}$ of $\Gamma(2)/\{\pm I\}$, we have
	\begin{align*}
		\theta_\triangle(\tau+2) &= e^{\frac{\pi i}{4}} \theta_\triangle(\tau),\\
		\theta_\triangle \left(\frac{\tau}{2\tau+1}\right) &= (2\tau+1)^{1/2} \theta_\triangle(\tau).
	\end{align*}
	Its behavior at the cusps $i\infty, 0, 1$ of $\Gamma(2)$ is given by
	\begin{align*}
		\theta_\triangle(\tau) &= q^{1/16} + O(q^{9/16}),\\
		(-i\tau)^{-1/2} \theta_\triangle\left(-\frac{1}{\tau}\right) &= \frac{1}{\sqrt{2}} + O(q),\\
		(-i\tau)^{-1/2} \theta_\triangle \left(\frac{\tau-1}{\tau}\right) &= e^{\frac{\pi i}{8}} q^{1/16} + O(q^{9/16}),
	\end{align*}
	respectively.
\end{lemma}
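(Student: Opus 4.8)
The plan is to work from the product expansion in \eqref{eq:theta-eta}, namely $\theta_\triangle(\tau)=\eta(\tau)^2/\eta(\tau/2)$, together with the classical transformation law for the Dedekind eta function, $\eta(\tau+1)=e^{\pi i/12}\eta(\tau)$ and $\eta(-1/\tau)=\sqrt{-i\tau}\,\eta(\tau)$, from which a multiplier formula for $\eta(\gamma\tau)$ on all of $\SL_2(\Z)$ follows. First I would verify the two generator transformations. For $\smat{1&2\\0&1}$ we have $\theta_\triangle(\tau+2)=\eta(\tau+2)^2/\eta((\tau+2)/2)=\eta(\tau+2)^2/\eta(\tau/2+1)$; applying $\eta(\tau+1)=e^{\pi i/12}\eta(\tau)$ twice in the numerator and once in the denominator gives a net factor $e^{4\pi i/12}/e^{\pi i/12}=e^{3\pi i/12}=e^{\pi i/4}$, as claimed. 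For $\smat{1&0\\2&1}$, i.e. $\tau\mapsto\tau/(2\tau+1)$, I would compute $\eta$ of $\tau/(2\tau+1)$ and of $\tfrac12\cdot\tau/(2\tau+1)=\tau/(4\tau+2)$ using the full $\SL_2(\Z)$ (and $\Gamma_0$-type) transformation of $\eta$, or equivalently write $\tfrac12\cdot\frac{\tau}{2\tau+1}$ as $\gamma'(\tau/2)$ for a suitable $\gamma'\in\SL_2(\Z)$ acting on $\tau/2$, and track the automorphy factors and root-of-unity multipliers; the $\eta^2$ in the numerator kills the ambiguous square roots of $-i(\cdots)$ and the multipliers conspire to leave exactly $(2\tau+1)^{1/2}$ with trivial phase. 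Since $\Gamma(2)/\{\pm I\}$ is freely generated by these two elements, this establishes modularity of weight $1/2$ on $\Gamma(2)$; holomorphy and nonvanishing on $\bbH$ are immediate because $\eta$ is holomorphic and nonvanishing on $\bbH$ and $\tau\mapsto\tau/2$ preserves $\bbH$.

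For the cusp expansions, the expansion at $i\infty$ is read off directly from the $q$-series $\theta_\triangle(\tau)=q^{1/16}\triangle(q^{1/2})=q^{1/16}(1+q^{1/2}+q^{3/2}+\cdots)=q^{1/16}+O(q^{9/16})$ (the next triangular number after $0$ is $1$, contributing $q^{1/16+1/2}=q^{9/16}$). For the cusp $0$, I would apply $\tau\mapsto-1/\tau$ using $\eta(-1/\tau)=\sqrt{-i\tau}\,\eta(\tau)$ in the numerator and $\eta(-1/(2\cdot(\tau/2)))$-style manipulation, i.e. $\eta(-1/\tau\cdot\tfrac12)=\eta(-1/(2\tau))$, handled by $\eta(-1/(2\tau))=\sqrt{-2i\tau}\,\eta(2\tau)$; assembling,
\[
(-i\tau)^{-1/2}\theta_\triangle(-1/\tau)=(-i\tau)^{-1/2}\frac{(-i\tau)\,\eta(\tau)^2}{\sqrt{-2i\tau}\,\eta(2\tau)}=\frac{1}{\sqrt2}\cdot\frac{\eta(\tau)^2}{\eta(2\tau)},
\]
and $\eta(\tau)^2/\eta(2\tau)=q^{2/24-2/24}\prod(1-q^n)^2/(1-q^{2n})=\prod(1-q^n)^2/(1-q^{2n})=1+O(q)$, giving $\tfrac{1}{\sqrt2}+O(q)$. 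For the cusp $1$, represented by $\tau\mapsto(\tau-1)/\tau=1-1/\tau$, I would write this as the composition of $\tau\mapsto-1/\tau$ followed by $\tau\mapsto\tau+1$ applied appropriately, or more efficiently factor $\smat{1&-1\\1&0}$ through $\SL_2(\Z)$ and apply the $\eta$-multiplier; the numerator $\eta((\tau-1)/\tau)^2$ contributes a phase $e^{2\cdot(\text{something})}$ and the denominator $\eta((\tau-1)/(2\tau))=\eta(\tfrac12-1/(2\tau))$ is evaluated by another $\SL_2(\Z)$ move on $2\tau$, and the surviving leading term is $e^{\pi i/8}q^{1/16}+O(q^{9/16})$ after the automorphy factors cancel against $(-i\tau)^{-1/2}$.

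The routine-but-delicate part, and the only real obstacle, is bookkeeping the eighth roots of unity and the branches of the square roots in the $\eta$-multiplier system when composing the transformations that send $\tau$ to $\tau/2$ and then act by $\Gamma(2)$; the cleanest way to keep this under control is to use the known closed form $\theta_\triangle=\eta(\tau)^2/\eta(\tau/2)$, because squaring $\eta$ in the numerator removes the sign ambiguity in $\sqrt{-i(c\tau+d)}$ and one only needs the well-documented multiplier $\varepsilon(\gamma)$ of $\eta$ (equivalently the Dedekind sum formula) for the three relevant matrices. Alternatively, one can cite \cite[Ch.~8]{Kohler2011} directly for all of these transformation and expansion formulas, since $\theta_\triangle$ there is exactly this eta-quotient; I would do the two generator computations explicitly and then obtain the cusp behavior at $0$ and $1$ either by the direct eta-quotient manipulation above or by reference, whichever is shorter in the final writeup.
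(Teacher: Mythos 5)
Your proposal is correct and takes essentially the same route as the paper: both start from the eta-quotient $\theta_\triangle(\tau)=\eta(\tau)^2/\eta(\tau/2)$ and deduce everything from the standard transformations $\eta(\tau+1)=e^{\pi i/12}\eta(\tau)$ and $\eta(-1/\tau)=(-i\tau)^{1/2}\eta(\tau)$ (the paper simply cites K\"ohler for these together with the half-integer shift identity $\eta(\tau+1/2)=e^{\pi i/24}\eta(2\tau)^3/(\eta(\tau)\eta(4\tau))$). The only small caveat is that for the cusp at $1$ the denominator $\eta\bigl(\tfrac12-\tfrac{1}{2\tau}\bigr)$ is not reachable by an $\SL_2(\Z)$ move alone, so you will in fact need that half-integer shift identity (or an equivalent determinant-$2$ relation); with it, the computation closes and yields the stated $e^{\pi i/8}q^{1/16}+O(q^{9/16})$.
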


\begin{proof}
The expression as an eta quotient in~\eqref{eq:theta-eta} implies that it has no zeros or poles on $\bbH$. The transformation laws for the action of $\Gamma(2)$ and the Fourier series expansions at the cusps follow from the well-known transformation laws
\[
	\eta(\tau+1) = e^{\frac{\pi i}{12}} \eta(\tau), \quad \eta \left(-\frac{1}{\tau}\right) = (-i \tau)^{1/2} \eta(\tau),
\]
and the identity
\[
	\eta(\tau+1/2) = e^{\frac{\pi i}{24}} \frac{\eta(2\tau)^3}{\eta(\tau) \eta(4\tau)},
\]
(see K\"{o}hler~\cite[(1.13), (1.14), and Proposition 1.5]{Kohler2011}).
\end{proof}

On the other hand, the right-hand side of \cref{thm:Kac-Wakimoto-Conj} exhibits two notable differences from classical theta functions. First, the sum is restricted to a cone region, rather than spanning the entire lattice $\Z^{2m}$. Second, the quadratic form $\sum_{j=1}^m k_j^2 + 2k_j r_j$ in the exponent of $q$ is indefinite, rather than positive definite. However, the first difference is superficial and can be resolved by rewriting the expressions as follows. To facilitate this reformulation, we introduce the following polynomial.

\begin{definition}\label{def:Vm}
	For each $m \in \Z_{>0}$, we define
	\[
		V_m(\vec{\bm{x}}) \coloneqq \prod_{1 \le i < j \le m} (x_i^2 - x_j^2) (y_i^2 - y_j^2) \times \prod_{j=1}^m x_j y_j,
	\]
	where we set $\vec{\bm{x}} = (\bm{x}_1, \dots, \bm{x}_m)$ and $\bm{x}_j = \smat{x_j \\ y_j}$ for later convenience.
\end{definition}

\begin{theorem}\label{thm:KW-rewrite}
	For each $m \ge 1$, we define
	\begin{align*}
		\KW_m(\tau) &\coloneqq \frac{1}{4^m} \frac{1}{m! \left(\prod_{j=1}^m (2j-1)!\right)^2} \sum_{\substack{x_1, \dots, x_m \in \Z \\ y_1, \dots, y_m \in \frac{1}{2} + \Z}} \prod_{j=1}^m \bigg(\sgn(x_j-y_j) - \sgn(-x_j-y_j) \bigg) \\
			&\qquad \times V_m(\vec{\bm{x}}) \prod_{j=1}^m q^{\frac{1}{2} (x_j^2 - y_j^2)} (-1)^{x_j-y_j-1/2}.
	\end{align*}
	Then \cref{thm:Kac-Wakimoto-Conj} is equivalent to the identity
	\begin{align}\label{eq:KW-identity}
		\theta_\triangle(\tau)^{2m(2m+1)} = \mathrm{KW}_m(\tau).
	\end{align}
\end{theorem}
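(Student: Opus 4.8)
The strategy is to start from the explicit $q$-series for $\triangle(q)^{2m(2m+1)}$ in \cref{thm:Kac-Wakimoto-Conj} and massage it, by an invertible change of summation variables, into the shape of $\KW_m(\tau)$. First I would substitute $q \mapsto q^{1/2}$ everywhere and multiply by the appropriate power $q^{(2m(2m+1))/16}$, so that the left-hand side of \eqref{intro3} becomes exactly $\theta_\triangle(\tau)^{2m(2m+1)} = q^{m(2m+1)/8}\triangle(q^{1/2})^{2m(2m+1)}$; one must bookkeep that the exponent $\sum_j(k_j^2+2k_jr_j+2k_j+r_j) - m(m-1)/2$ in \cref{thm:Kac-Wakimoto-Conj}, after the $q\mapsto q^{1/2}$ shift and the $q^{m(2m+1)/8}$ prefactor, matches the target exponent $\tfrac12\sum_j(x_j^2-y_j^2)$ under the change of variables below. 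The natural substitution, suggested by \cref{def:Vm}, is $x_j = r_j + k_j + \tfrac12$ and $y_j = r_j + \tfrac12$ (or a sign variant), so that $r_j, r_j+k_j \ge 0$ corresponds to $x_j, y_j \in \tfrac12 + \Z_{\ge 0}$ with $x_j \ge $ something — which is precisely where the cone condition and the $\sgn$ factors enter.

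Next I would check that, under this substitution, every polynomial factor in \eqref{intro3} turns into the corresponding factor of $V_m(\vec{\bm x})$. Concretely: $(r_i+k_i - r_j - k_j)(2 + r_i+k_i+r_j+k_j) = x_i^2 - x_j^2$ up to sign, $(r_i - r_j)(1 + r_i + r_j) = y_i^2 - y_j^2$ up to sign, and $(1+2r_j)(1+r_j+k_j)$ relates to $x_j y_j$ up to a factor of $2$ and a sign — these account for the $1/4^m$ and for the overall sign discrepancies, which must be tracked carefully and shown to cancel against the $(-1)^{k_1+\cdots+k_m}$ in \eqref{intro3}, the $(-1)^{x_j - y_j - 1/2}$ in $\KW_m$, and the $(-1)^{m(m+1)/2}$-type constants coming from \eqref{eq:LHS}. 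The $\sgn(x_j - y_j) - \sgn(-x_j - y_j)$ factor is the device that (i) restricts the lattice sum $\sum_{x_j \in \Z,\, y_j \in \frac12+\Z}$ back down to the cone $r_j \ge 0,\ r_j + k_j \ge 0$ and (ii) supplies the sign $(-1)^{k_j}$ needed to match; since $V_m$ is odd in each variable and the $q$-exponent and the remaining sign factor are even under $\bm x_j \mapsto -\bm x_j$, one also uses the resulting symmetrization to fold the sum over the full lattice, absorbing the $2^m$ or $4^m$ normalization. I would verify the $\sgn$ identity case by case on the four sign patterns of $(x_j, y_j)$, noting that $y_j \notin \Z$ forces $x_j \pm y_j \ne 0$ so no boundary terms arise.

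Finally, I would assemble the pieces: the product $m!\,(\prod_{j=1}^m(2j-1)!)^2$ in the denominator is common to both sides, so the identity reduces to the bookkeeping of the power of $4$, the signs, and the shift of the $q$-exponent. The main obstacle I expect is precisely this sign and normalization bookkeeping — there are several competing sources of signs ($(-1)^{\sum k_j}$, $(-1)^{\sum r_j}$, the odd parity of $V_m$, the constant $(-1)^{m(m+1)/2}q^{m(m-1)/4}$ from \eqref{eq:LHS}, and the $(-1)^{x_j-y_j-1/2}$), and ensuring they all conspire correctly, together with reconciling the $q^{1/2}\mapsto -q$ replacement used at the end of \cref{Affine Lie superalgebra} versus the $q\mapsto q^{1/2}$ normalization used to define $\theta_\triangle$, requires care. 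Once the change of variables is pinned down, however, everything is an elementary (if delicate) identification of summands, and no convergence issues arise since both sides are genuine $q$-series with polynomially bounded coefficients.
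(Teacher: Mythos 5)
Your plan is essentially the paper's proof: the same change of variables (namely $y_j = r_j + \tfrac12$ and $x_j = k_j + r_j + 1$ --- note the $+1$ rather than $+\tfrac12$, since the target lattice has $x_j \in \Z$ and $y_j \in \tfrac12+\Z$, and only then does $(r_i+k_i-r_j-k_j)(2+r_i+k_i+r_j+k_j)$ become $x_i^2-x_j^2$), followed by folding the resulting cone sum onto the full lattice using the $(\pm x_j,\pm y_j)$ symmetry, the oddness of $V_m$ in each variable, and the identity $\sgn(x-y)-\sgn(-x-y)=2\sgn(x)$ on the region $x^2>y^2$ (and $=0$ off it), which accounts for the $4^m$. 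Apart from that off-by-$\tfrac12$ slip in the substitution for $x_j$, which your own lattice bookkeeping would immediately catch, the argument is correct and structurally identical to the one in the paper.
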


\begin{proof}
By changing variables via $k_j = x_j - y_j - 1/2, r_j = y_j-1/2$, and $q \mapsto q^{1/2}$ in \cref{thm:Kac-Wakimoto-Conj}, we obtain
\begin{align}\label{eq:KW-cone}
	\theta_\triangle(\tau)^{2m(2m+1)} &= \frac{2^m}{m! \left(\prod_{j=1}^m (2j-1)!\right)^2} \sum_{\substack{x_1, \dots, x_m \in \Z_{>0} \\ y_1, \dots, y_m \in \frac{1}{2} + \Z_{\ge 0} \\ x_j > y_j}} V_m(\vec{\bm{x}}) \prod_{j=1}^m q^{\frac{1}{2}(x_j^2 - y_j^2)} (-1)^{(x_j-y_j-1/2)}.
\end{align}
By focusing on the symmetry of the transformation $(x_j, y_j) \mapsto (\pm x_j, \pm y_j)$ with all four possible sign changes of the summand, the right-hand side can be rewritten as
\begin{align*}
	= \frac{1}{2^m} \frac{1}{m! \left(\prod_{j=1}^m (2j-1)!\right)^2} \sum_{\substack{x_1, \dots, x_m \in \Z \\ y_1, \dots, y_m \in \frac{1}{2} + \Z \\ x_j^2 - y_j^2 > 0}} V_m(\vec{\bm{x}}) \prod_{j=1}^m \sgn(x_j) q^{\frac{1}{2}(x_j^2 - y_j^2)} (-1)^{(x_j-y_j-1/2)},
\end{align*}
which immediately leads to the desired result.
\end{proof}

\subsection{Vign\'{e}ras' criterion for constructing non-holomorphic modular forms}\label{Vigneras}

Our goal is to construct a framework for theta functions associated with indefinite quadratic forms that enables the series $\KW_m(\tau)$ to be interpreted as a modular form. This will be achieved by applying Vign\'{e}ras' criterion for theta functions associated with general quadratic forms. For a positive integer $n$ and a pair of non-negative integers $(r,s)$ with $n = r+s$, we consider the quadratic form
\begin{align*}
	Q(\bm{x}) = \frac{1}{2} {}^t \bm{x} A \bm{x} = \frac{1}{2} \left(\sum_{j=1}^r x_j^2 - \sum_{j=r+1}^n x_j^2 \right),
\end{align*}
of signature $(r,s)$, where $A = \mathrm{diag}(\underbrace{1, \dots, 1}_r, \underbrace{-1, \dots, -1}_s)$. The associated bilinear form is given by $B(\bm{x}, \bm{y}) = {}^t \bm{x} A \bm{y}$. In terms of the Euler operator $\calE$ and the Laplace operator $\Delta$, defined by
\begin{align*}
	\mathcal{E} \coloneqq \sum_{j=1}^n x_j \frac{\partial}{\partial x_j}, \qquad \Delta \coloneqq \sum_{j=1}^r \frac{\partial^2}{\partial x_j^2} - \sum_{j=r+1}^n \frac{\partial^2}{\partial x_j^2},
\end{align*}
we introduce Vign\'{e}ras' operator $\calD$ as
\[
	\calD \coloneqq \mathcal{E} - \frac{\Delta}{4\pi}.
\]

\begin{theorem}[Vign\'{e}ras~\cite{Vigneras1977-proc}]\label{fact:Vig}
	Let $p\,\colon \R^n \to \C$ be a function satisfying the following assumptions:
	\begin{itemize}
		\item[$(1)$] $p(\bm{x}) e^{-2\pi Q(\bm{x})}$ is a Schwartz function.
		\item[$(2)$] $p(\bm{x})$ is an eigenfunction of the operator $\calD$ with the eigenvalue $\lambda \in \Z$, that is, $\calD p(\bm{x}) = \lambda p(\bm{x})$.
	\end{itemize}
	Then, for $\bm{a}, \bm{b} \in \R^n$, the theta function defined by
	\[
		\theta_{\bm{a}, \bm{b}}(\tau) \coloneqq v^{-\lambda/2} \sum_{\bm{x} \in \bm{a} + \Z^n} p (\bm{x} \sqrt{v}) q^{Q(\bm{x})} e^{2\pi i B(\bm{x}, \bm{b})}, \qquad (\tau = u + iv)
	\]
	converges absolutely and satisfies
	\[
		\theta_{\bm{a}, \bm{b}} \left(-\frac{1}{\tau}\right) = i^{-s-\lambda} (-i\tau)^{\lambda+n/2} e^{2\pi i B(\bm{a}, \bm{b})} \theta_{-\bm{b}, \bm{a}} (\tau).
	\]
\end{theorem}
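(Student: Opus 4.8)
The plan is to establish the two assertions of the theorem — absolute convergence and the transformation under $S\colon\tau\mapsto-1/\tau$ — separately. The first is a rescaling that reduces everything to the Schwartz decay of $p(\bm{x})e^{-2\pi Q(\bm{x})}$; the second is Poisson summation combined with a single Fourier-transform identity into which the eigenvalue equation $(2)$ feeds.

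For convergence I would set $g(\bm{x})\coloneqq p(\bm{x})e^{-2\pi Q(\bm{x})}$, which is Schwartz by $(1)$. Since $\lvert q^{Q(\bm{x})}\rvert=e^{-2\pi vQ(\bm{x})}$ and $\lvert e^{2\pi iB(\bm{x},\bm{b})}\rvert=1$, the substitution $\bm{y}=\sqrt{v}\,\bm{x}$, under which $vQ(\bm{x})=Q(\bm{y})$, turns the modulus of the term indexed by $\bm{x}$ into $v^{-\lambda/2}\lvert g(\sqrt{v}\,\bm{x})\rvert$. As $\bm{x}$ ranges over $\bm{a}+\Z^n$ the points $\sqrt{v}\,\bm{x}$ form a translate of the lattice $\sqrt{v}\,\Z^n$, so the rapid decay of $g$ gives summability with a bound locally uniform in $\tau$; hence $\theta_{\bm{a},\bm{b}}$ is holomorphic on $\bbH$. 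The same estimate applied to derivatives shows that, for each fixed $\tau$, the summand extends to a Schwartz function $\Psi_\tau\colon\R^n\to\C$ (it is $v^{-\lambda/2}g(\sqrt{v}\,\cdot\,)$ twisted by the smooth unimodular factor $e^{2\pi iuQ}e^{2\pi iB(\,\cdot\,,\bm{b})}$, whose derivatives grow only polynomially) — precisely what Poisson summation requires.

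For the $S$-transformation I would apply Poisson summation in the form $\sum_{\bm{x}\in\bm{a}+\Z^n}\Psi(\bm{x})=\sum_{\bm{\xi}\in\Z^n}\widehat{\Psi}(\bm{\xi})e^{2\pi iB(\bm{a},\bm{\xi})}$, with $\widehat{\Psi}(\bm{\xi})=\int_{\R^n}\Psi(\bm{x})e^{-2\pi iB(\bm{x},\bm{\xi})}\,\dd\bm{x}$, to $\Psi=\Psi_{-1/\tau}$. After the substitution $\bm{y}=\sqrt{v'}\,\bm{x}$ with $v'=\Im(-1/\tau)=v/\lvert\tau\rvert^2$, the integral $\widehat{\Psi_{-1/\tau}}(\bm{\xi})$ becomes, up to explicit powers of $v'$ and a $\bm{b}$-shift, the Fourier transform of $\bm{y}\mapsto p(\bm{y})e^{2\pi i\sigma Q(\bm{y})}$ evaluated at a rescaled point, where $\sigma=-\overline{\tau}/v$ has $\Im\sigma=1$. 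The crux is that, \emph{because $\calD p=\lambda p$}, the Fourier transform of $\bm{y}\mapsto p(\bm{y})e^{2\pi i\sigma Q(\bm{y})}$ is again of the same shape with $\sigma$ replaced by $-1/\sigma$ and prefactor $i^{-s-\lambda}(-i\sigma)^{-\lambda-n/2}$; at $\sigma=i$ this reads: $g=pe^{-2\pi Q}$ is an eigenfunction of the Fourier transform with eigenvalue $i^{-s-\lambda}$. Granting it, one substitutes back, absorbs the powers of $v'$, $\lvert\tau\rvert$ and $\sigma$ into $(-i\tau)^{\lambda+n/2}$, matches the Poisson phase $e^{2\pi iB(\bm{a},\bm{\xi})}$ against the $\bm{b}$-shift to produce $e^{2\pi iB(\bm{a},\bm{b})}$ together with the interchange $(\bm{a},\bm{b})\mapsto(-\bm{b},\bm{a})$, and reads off the remaining sum as $\theta_{-\bm{b},\bm{a}}(\tau)$; the eigenvalue supplies the factor $i^{-s-\lambda}$.

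Proving the Fourier-transform identity is the heart of the matter and the source of the only real difficulty. The mechanism is that conjugating $\calD$ by the Gaussian produces a harmonic-oscillator operator: using $\calE e^{-2\pi Q}=-4\pi Q\,e^{-2\pi Q}$ and the analogue for $\Delta$ one computes $e^{-2\pi Q}\circ\calD\circ e^{2\pi Q}=2\pi Q-\tfrac{\Delta}{4\pi}-\tfrac{n}{2}$, so that $\calD p=\lambda p$ becomes $\bigl(2\pi Q-\tfrac{\Delta}{4\pi}\bigr)g=(\lambda+\tfrac{n}{2})g$; and using $\widehat{Qf}=-\tfrac{1}{8\pi^2}\Delta\widehat{f}$ and $\widehat{\Delta f}=-8\pi^2Q\widehat{f}$ one checks that $2\pi Q-\tfrac{\Delta}{4\pi}$ commutes with the Fourier transform. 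Hence $\mathcal{F}$ preserves each of its eigenspaces and $\widehat{g}$ lies in the same one as $g$; to pin down the scalar one diagonalizes in the Hermite basis adapted to the splitting $(r,s)$, where each of the $r$ positive-definite coordinates contributes the Bochner--Hecke eigenvalue ($(-i)^k$ on the degree-$k$ part) and each of the $s$ negative-definite coordinates contributes a further factor $i^{-1}$, for a total of $i^{-s-\lambda}$. The step that genuinely needs care — and what I expect to be the main obstacle — is exactly this last analysis along the negative-definite directions: there $e^{-2\pi Q}$ grows, the literal Gaussian ground state is inadmissible, and one must run Vign\'{e}ras' one-parameter (heat-flow) family argument and control the Fourier transform using the Schwartz hypothesis on $pe^{-2\pi Q}$ rather than on $p$ alone; this, together with the bookkeeping of the constants and the non-homogeneous case of the identity, is carried out in Vign\'{e}ras~\cite{Vigneras1977-proc}.
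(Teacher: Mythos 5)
The paper offers no proof of \cref{fact:Vig}: the statement is imported verbatim from Vign\'eras' article and used as a black box, so there is no internal argument to compare yours against. Your outline follows the standard (essentially the original) proof. The convergence step is complete: the modulus of the $\bm{x}$-th term is $v^{-\lambda/2}\lvert g(\bm{x}\sqrt{v})\rvert$ with $g=pe^{-2\pi Q}$ Schwartz, so the lattice sum converges locally uniformly in $\tau$. The reduction of the $S$-transformation to a single Fourier-transform identity via Poisson summation is the right mechanism, and your two structural computations --- the conjugation $e^{-2\pi Q}\circ\calD\circ e^{2\pi Q}=2\pi Q-\tfrac{\Delta}{4\pi}-\tfrac{n}{2}$ and the commutation of $2\pi Q-\tfrac{\Delta}{4\pi}$ with the $B$-twisted Fourier transform --- are both correct.

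Two comments on the part you leave to the reference. First, the step you single out as the main obstacle, pinning down the scalar $i^{-s-\lambda}$ along the negative-definite directions, is in fact the easier half: since $g$ itself (not $p$) is Schwartz, expand $g$ in the ordinary tensor-product Hermite basis of $L^2(\R^n)$; symmetry of $2\pi Q-\tfrac{\Delta}{4\pi}$ on Schwartz functions forces the expansion to be supported on multi-indices with $\lvert k\rvert-\lvert l\rvert=\lambda+s$ (with $k$ along the positive and $l$ along the negative directions), and the twisted Fourier transform multiplies each such Hermite product by $(-i)^{\lvert k\rvert}\,i^{\lvert l\rvert}=(-i)^{\lambda+s}=i^{-s-\lambda}$, a single constant on the whole (infinite-dimensional) eigenspace; no heat-flow argument is needed at $\sigma=i$. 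Second, the genuine remaining work is the identity for $\sigma=-u/v+i$ with $u\neq 0$: because $p$ is not assumed homogeneous, $p\,e^{2\pi i\Re(\sigma)Q}$ is no longer a $\calD$-eigenfunction, the rescaling that would reduce to $\sigma=i$ replaces $p$ by $p(\cdot/\sqrt{v})$ (which again fails the eigenvalue equation), and the analytic-continuation shortcut available for polynomial $p$ breaks down because $p(\bm{x}\sqrt{v'})$ is not holomorphic in $\tau$. That extension is the actual content of Vign\'eras' argument, and your proposal asserts it rather than proves it. As an appeal to the very citation the paper itself relies on, this is acceptable; as a self-contained proof it is incomplete at exactly that point.
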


\section{Modularity of the Kac--Wakimoto series}\label{Modularity-Kac--Wakimoto}

To capture the series $\KW_m(\tau)$ using Vign\'{e}ras' criterion, our task is to find an appropriate function $p(\bm{x})$ in the case $r = s = m$. Once the modular transformation laws satisfied by $\KW_m(\tau)$ are obtained, we can compare them with \cref{lem:theta-triangle} to derive the desired identity in \cref{thm:KW-rewrite}.

\subsection{Indefinite theta functions of signature $(m,m)$}\label{indefinite-theta}

For the case $s=1$, Zwegers~\cite{Zwegers2002} developed a method to construct $p(\bm{x})$, which played a key role in understanding Ramanujan's mock theta functions. This approach was later generalized by Nazaroglu~\cite{Nazaroglu2018} to treat general signatures and by Roehrig--Zwegers~\cite{RoehrigZwegers2022, RoehrigZwegers2022-pre} to include spherical polynomials when $s=1$. Building on these advancements, we provide a further extension of their method, applicable to $\KW_m(\tau)$ for the signature $(m,m)$ with spherical polynomials.

To simplify notation, we represent elements $(x_1, \dots, x_m, y_1, \dots, y_m) \in \R^{2m}$ as $\vec{\bm{x}} = (\bm{x}_1, \dots, \bm{x}_m)$, where $\bm{x}_j = \smat{x_j \\ y_j}$ for $1 \le j \le m$. For a vector $\vec{x} = (x_1, \dots, x_m) \in \R^m$, we set $|\vec{x}| \coloneqq x_1 + \cdots + x_m$. To clarify the dimension, we attach the subscript $m$ to symbols such as $Q, B, \Delta$, and $\calD$. For instance,
\[
	Q_m(\vec{\bm{x}}) = \frac{1}{2} \sum_{j=1}^m (x_j^2 - y_j^2).
\]
Additionally, defining 
\[
	\calD_1^{(j)} = x_j \frac{\partial}{\partial x_j} + y_j \frac{\partial}{\partial y_j} - \frac{1}{4\pi} \left(\frac{\partial^2}{\partial x_j^2} - \frac{\partial^2}{\partial y_j^2} \right),
\]
the operator $\calD_m$ can be decomposed as $\calD_m = \sum_{j=1}^m \calD_1^{(j)}$. In the case $m =1$, the region $\{\bm{x} \in \R^2 \mid Q_1(\bm{x}) < 0\}$ consists of two components. We fix one component, for instance,
\[
	\calC \coloneqq \{\bm{x} \in \R^2 \mid Q_1(\bm{x}) < 0, y > 0\}.
\]
The error function, defined by
\[
	E(z) \coloneqq 2\int_0^z e^{-\pi u^2} \dd u,
\]
has been widely used since Zwegers~\cite{Zwegers2002} to smoothly approximate the sign function. Under these settings, we introduce candidate functions that satisfy Vign\'{e}ras' criterion, extending the function constructed by Roehrig--Zwegers in the case $m=1$.

\begin{definition}\label{def:deg-m-p}
	For $\vec{\bm{c}}_0 = (\bm{c}_1^{(0)}, \dots, \bm{c}_m^{(0)}), \vec{\bm{c}}_1 = (\bm{c}_1^{(1)}, \dots, \bm{c}_m^{(1)}) \in \mathcal{C}^m$ and a polynomial $f(\vec{\bm{x}})$ \bf of degree $d$,  we define
	\begin{align*}
		p^{\vec{\bm{c}}_0, \vec{\bm{c}}_1}[f](\vec{\bm{x}}) \coloneqq \sum_{\vec{\sigma} = (\sigma_1, \dots, \sigma_m) \in \{0,1\}^m} (-1)^{|\vec{\sigma}|} \sum_{k=0}^d \frac{(-1)^k}{(4\pi)^k} \sum_{\substack{\vec{k} = (k_1, \dots, k_m) \in (\Z_{\ge 0})^m \\ |\vec{k}| = k}} F_{\vec{\sigma}, \vec{k}}(\vec{\bm{x}}) G_{\vec{\sigma}, \vec{k}}(\vec{\bm{x}}),
	\end{align*}
	where we put
	\begin{align*}
		F_{\vec{\sigma}, \vec{k}}(\vec{\bm{x}}) &\coloneqq \prod_{j=1}^m \frac{1}{k_j!} E^{(k_j)} \left(\frac{B_1(\bm{c}_j^{(\sigma_j)}, \bm{x}_j)}{\sqrt{-Q_1(\bm{c}_j^{(\sigma_j)})}} \right),\\
		G_{\vec{\sigma}, \vec{k}}(\vec{\bm{x}}) &\coloneqq \left(\prod_{j=1}^m \partial_{\bm{c}_j^{(\sigma_j)}}(\bm{x}_j)^{k_j}\right) f(\vec{\bm{x}}),\\
		\partial_{\bm{c}}(\bm{x}) &\coloneqq \frac{1}{\sqrt{-Q_1(\bm{c})}} \left(c_1 \frac{\partial}{\partial x} + c_2 \frac{\partial}{\partial y}\right), \qquad \bm{c} = \pmat{c_1 \\ c_2}, \bm{x} = \pmat{x \\ y}.
	\end{align*}
\end{definition}

We will now verify that this construction satisfies the conditions of the criterion.

\begin{proposition}\label{prop:degm-Vig1}
	The function $p^{\vec{\bm{c}}_0, \vec{\bm{c}}_1}[f](\vec{\bm{x}}) e^{-2\pi Q_m(\vec{\bm{x}})}$ is a Schwartz function.
\end{proposition}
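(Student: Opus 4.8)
The plan is to show that $p^{\vec{\bm{c}}_0, \vec{\bm{c}}_1}[f](\vec{\bm{x}}) e^{-2\pi Q_m(\vec{\bm{x}})}$ is smooth and rapidly decreasing together with all of its derivatives. Smoothness is immediate since $E(z)$, its derivatives $E^{(k)}(z)$, and polynomials are all smooth, and the arguments $B_1(\bm{c}_j^{(\sigma_j)}, \bm{x}_j)/\sqrt{-Q_1(\bm{c}_j^{(\sigma_j)})}$ are linear in $\vec{\bm{x}}$ (note $-Q_1(\bm{c}) > 0$ for $\bm{c} \in \mathcal{C}$, so the square roots are harmless). So the real content is the decay. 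The key observation is that $p$ is a finite sum of terms, each of which is a product over $j = 1, \dots, m$ of a one-variable factor in $\bm{x}_j$, so by taking absolute values it suffices to treat the $m = 1$ case: it is enough to show that for each $\bm{c} \in \mathcal{C}$, each $k \ge 0$, and each polynomial $g(\bm{x})$ in two variables, the function $E^{(k)}\!\left(B_1(\bm{c}, \bm{x})/\sqrt{-Q_1(\bm{c})}\right) g(\bm{x}) e^{-2\pi Q_1(\bm{x})}$ and all its partial derivatives decay faster than any polynomial as $|\bm{x}| \to \infty$. Derivatives only produce more terms of the same shape (derivatives of $E^{(k)}$ are $E^{(k+1)}$ against a linear function, derivatives of $g$ are polynomials, derivatives of $e^{-2\pi Q_1}$ bring down linear factors), so it suffices to bound the function itself, uniformly over the finitely many relevant data.

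For the $m=1$ estimate I would split $\R^2$ according to the sign structure of $Q_1(\bm{x}) = \tfrac12(x^2 - y^2)$. On the region where $Q_1(\bm{x}) \ge \kappa(x^2+y^2)$ for some $\kappa > 0$ — i.e.\ a closed cone around the positive/negative $x$-axis avoiding the light cone — the Gaussian factor $e^{-2\pi Q_1(\bm{x})}$ alone beats any polynomial, and $E^{(k)}$ is bounded there (for $k \ge 1$, $E^{(k)}(z) = H_{k-1}(\sqrt{2\pi}\,z)\,e^{-\pi z^2}\cdot(\text{const})$ up to normalization is Schwartz in $z$; for $k = 0$, $E(z)$ is bounded), so that region is fine. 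The complementary region is where $|x| \le C|y|$ for a suitable constant, equivalently a neighborhood of the $y$-axis together with the light cone directions; here $e^{-2\pi Q_1(\bm{x})}$ may blow up, and the decay must come entirely from the error function factor. On this region, the key point is that $B_1(\bm{c}, \bm{x})/\sqrt{-Q_1(\bm{c})}$ grows linearly in $|\bm{x}|$: writing $\bm{c} = \smat{c_1 \\ c_2}$ with $c_2 > 0$ and $c_2^2 > c_1^2$, one has $B_1(\bm{c}, \bm{x}) = c_1 x - c_2 y$, and on the region $|x| \le C|y|$ this is comparable to $|y|$, hence to $|\bm{x}|$, with a definite sign depending on the sign of $y$. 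Since $E^{(k)}(z)$ decays like $e^{-\pi z^2}$ as $z \to \pm\infty$ (this is the Schwartz property of $E^{(k)}$ for $k \ge 1$, and $E(z) - \sgn(z)$ is $O(e^{-\pi z^2})$ for $k = 0$), the factor $E^{(k)}\!\left(B_1(\bm{c}, \bm{x})/\sqrt{-Q_1(\bm{c})}\right)$ contributes a super-exponential decay in $|\bm{x}|$ on this region, which dominates both the polynomial $g$ and the at-worst-exponential growth of $e^{-2\pi Q_1(\bm{x})}$ (which is $e^{-\pi(x^2 - y^2)} \le e^{\pi y^2}$, and on $|x| \le C|y|$ the error-function decay is roughly $e^{-c' y^2}$ with $c'$ as large as we need after absorbing constants — more carefully, one picks the cone half-angle so the two quadratic rates are compatible, or handles the light-cone sliver $||x| - |y|| \le 1$ separately where $Q_1$ is bounded).

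The delicate point — and the one I expect to be the main obstacle — is the overlap/light-cone region where $Q_1(\bm{x})$ is small but $|\bm{x}|$ is large: neither the Gaussian nor a single error-function factor is obviously in control if one is careless about constants. The clean way to handle it is to note that in the $m=1$ building block there is only a single error-function factor but we get to choose how to partition $\R^2$, and to observe that $|Q_1(\bm{x})|$ small forces $|x|$ and $|y|$ comparable, so $|y| \gtrsim |\bm{x}|$ and the linear form $c_1 x - c_2 y$ still grows linearly in $|\bm{x}|$ \emph{unless} $(c_1, c_2)$ is proportional to $(x, y)$ — but that cannot happen for $\bm{x}$ near the light cone since $\bm{c} \in \mathcal{C}$ lies strictly inside it ($c_2^2 > c_1^2$). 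Thus on a neighborhood of the light cone the error-function argument is bounded below (in absolute value) by a positive multiple of $|\bm{x}|$, giving $e^{-c|\bm{x}|^2}$ decay, while $e^{-2\pi Q_1(\bm{x})}$ is bounded by $e^{O(|\bm{x}|)}$ there; this beats any polynomial. Assembling the three regions (interior of the positive-definite cone, neighborhood of the $y$-axis, neighborhood of the light cone), each contributing the required decay, and then taking the product over $j$ and summing the finitely many terms in \cref{def:deg-m-p}, together with the reduction of derivatives to the same class of functions, completes the proof that $p^{\vec{\bm{c}}_0, \vec{\bm{c}}_1}[f](\vec{\bm{x}})\, e^{-2\pi Q_m(\vec{\bm{x}})}$ is Schwartz.
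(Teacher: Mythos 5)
There is a genuine gap, and it is exactly at the step ``by taking absolute values it suffices to treat the $m=1$ case'': reducing to the claim that each individual factor $E^{(k)}\bigl(B_1(\bm{c},\bm{x})/\sqrt{-Q_1(\bm{c})}\bigr)\,g(\bm{x})\,e^{-2\pi Q_1(\bm{x})}$ is rapidly decreasing is false for $k=0$. For $k\ge 1$ your analysis is fine, since $E^{(k)}(z)$ is a polynomial times $e^{-\pi z^2}$ and the resulting exponent $Q_1(\bm{x})-B_1(\bm{c},\bm{x})^2/2Q_1(\bm{c})$ is positive definite. But $E(z)$ itself tends to $\pm 1$ as $z\to\pm\infty$: along the $y$-axis one has $B_1(\bm{c},\bm{x})=-c_2y$ with $c_2>0$, so $E(\cdots)\to \mp1$ while $e^{-2\pi Q_1(\bm{x})}=e^{\pi y^2}$ blows up, and the single term is unbounded. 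You do record the caveat that only $E(z)-\sgn(z)$ is $O(e^{-\pi z^2})$, but then still credit the factor $E(\cdots)$ with ``super-exponential decay'' near the $y$-axis; the surviving $\sgn$ part destroys the estimate, so the factors with $k_j=0$ cannot be controlled in isolation.

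What rescues the statement --- and what the paper's proof is built around --- is the cancellation in the alternating sum over $\vec{\sigma}$. For an index $j$ with $k_j=0$ the operator part $G_{\vec{\sigma},\vec{k}}$ does not involve $\sigma_j$, so the two terms $\sigma_j=0,1$ combine into the difference $E\bigl(B_1(\bm{c}_j^{(0)},\bm{x}_j)/\sqrt{-Q_1(\bm{c}_j^{(0)})}\bigr)-E\bigl(B_1(\bm{c}_j^{(1)},\bm{x}_j)/\sqrt{-Q_1(\bm{c}_j^{(1)})}\bigr)$. Writing $E(z)=\sgn(z)-\sgn(z)\beta(z^2)$, the non-decaying parts reduce to $\sgn\bigl(B_1(\bm{c}_j^{(0)},\bm{x}_j)\bigr)-\sgn\bigl(B_1(\bm{c}_j^{(1)},\bm{x}_j)\bigr)$, which is supported on a closed sector contained in $\{Q_1>0\}$ (both lines $B_1(\bm{c},\cdot)=0$ with $\bm{c}\in\calC$ lie strictly inside the positive cone), where the Gaussian supplies the decay; the $\beta$-parts decay like $e^{-\pi z^2}$ and fall under your $k\ge1$ analysis. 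This pairing for the $k_j=0$ indices (the content of Roehrig--Zwegers' Lemma 3.1, which the paper invokes after regrouping the sum as in its displayed formula for $S_{\vec{k}}$) is not an optional refinement but the essential mechanism; your three-region estimates and the reduction of derivatives are otherwise sound, but the proof must retain the difference structure wherever $k_j=0$.
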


\begin{proof}
It suffices to show that the function
\begin{align}\label{eq:def-Sk}
	S_{\vec{k}}(\vec{\bm{x}}) \coloneqq \sum_{\vec{\sigma} \in \{0,1\}^m} (-1)^{|\vec{\sigma}|} F_{\vec{\sigma}, \vec{k}}(\vec{\bm{x}}) G_{\vec{\sigma}, \vec{k}}(\vec{\bm{x}}) e^{-2\pi Q_m(\vec{\bm{x}})}
\end{align}
is a Schwartz function for each $\vec{k} = (k_1, \dots, k_m) \in (\Z_{\ge 0})^m$. Let $I(\vec{k}) \coloneqq \{1 \le j \le m \mid k_j = 0\}$ and its complement $I(\vec{k})^c = \{1, 2, \dots, m\} \setminus I(\vec{k})$. It is straightforward to verify that
\begin{align}\label{eq:Sk-explicit}
\begin{split}
	S_{\vec{k}}(\vec{\bm{x}}) &= \sum_{\vec{\sigma}' \in \{0,1\}^{I(\vec{k})^c}} (-1)^{|\vec{\sigma}'|} \prod_{j \in I(\vec{k})} \left(E\left(\frac{B_1(\bm{c}_j^{(0)}, \bm{x}_j)}{\sqrt{-Q_1(\bm{c}_j^{(0)})}} \right) - E\left(\frac{B_1(\bm{c}_j^{(1)}, \bm{x}_j)}{\sqrt{-Q_1(\bm{c}_j^{(1)})}} \right)\right) e^{-2\pi Q_1(\bm{x}_j)}\\
		&\qquad \times \prod_{j \in I(\vec{k})^c} \frac{1}{k_j!} E^{(k_j)} \left(\frac{B_1(\bm{c}_j^{(\sigma'_j)}, \bm{x}_j)}{\sqrt{-Q_1(\bm{c}_j^{(\sigma'_j)})}} \right) e^{-2\pi Q_1(\bm{x}_j)} \times \left(\prod_{j \in I(\vec{k})^c} \partial_{\bm{c}_j}^{(\sigma'_j)} (\bm{x}_j)^{k_j}\right) f(\vec{\bm{x}}).
\end{split}
\end{align}
The fact that this is a Schwartz function follows essentially from the proof of~\cite[Lemma 3.1]{RoehrigZwegers2022}. Indeed, for $k>0$ and $\bm{c} \in \calC$, there exists a polynomial $P(\bm{x})$ such that 
\[
	E^{(k)} \left(\frac{B_1(\bm{c}, \bm{x})}{\sqrt{-Q_1(\bm{c})}}\right) e^{-2\pi Q_1(\bm{x})} = P(\bm{x}) e^{-2\pi \left(Q_1(\bm{x}) -\frac{1}{2} \frac{B_1(\bm{c}, \bm{x})^2}{Q_1(\bm{c})} \right)}.
\]
Since $Q_1(\bm{x}) - B_1(\bm{c}, \bm{x})^2/2Q_1(\bm{c})$ is positive definite, as shown in~\cite[Lemma 2.5]{Zwegers2002}, we see that $S_{\vec{k}}(\vec{\bm{x}})$ is a Schwartz function in $\bm{x}_j$ for $j \in I(\vec{k})^c$. As for $j \in I(\vec{k})$, it is known from the proof of~\cite[Lemma 3.1]{RoehrigZwegers2022} that
\[
	\left(E\left(\frac{B_1(\bm{c}_0, \bm{x})}{\sqrt{-Q_1(\bm{c}_0)}} \right) - E\left(\frac{B_1(\bm{c}_1, \bm{x})}{\sqrt{-Q_1(\bm{c}_1)}} \right) \right) e^{-2\pi Q_1(\bm{x})}
\]
is a Schwartz function for $\bm{c}_0, \bm{c}_1 \in \calC$. This implies that $S_{\vec{k}}(\vec{\bm{x}})$ is a Schwartz function in $\bm{x}_j$ for $j \in I(\vec{k})$ as well.
\end{proof}

\begin{definition}
	A polynomial $f(\vec{\bm{x}})$ is called a \emph{spherical polynomial} of degree $d$ if it is homogeneous of degree $d$ and annihilated by the Laplace operator $\Delta_m$.
\end{definition}

\begin{proposition}\label{prop:degm-Vig2}
	For a spherical polynomial $f(\vec{\bm{x}})$ of degree $d$, we have $\mathcal{D}_m p^{\vec{\bm{c}}_0, \vec{\bm{c}}_1}[f](\vec{\bm{x}}) = d p^{\vec{\bm{c}}_0, \vec{\bm{c}}_1}[f](\vec{\bm{x}})$.
\end{proposition}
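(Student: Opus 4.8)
The plan is to exploit the ``one pair of variables at a time'' structure of the definition: up to the outer sign $(-1)^{|\vec\sigma|}$, the function $p^{\vec{\bm c}_0,\vec{\bm c}_1}[f]$ is obtained from $f$ by applying, for each $j$, a single linear operator acting only on the pair $\bm x_j=(x_j,y_j)$, and it suffices to know that each such operator commutes with the corresponding summand $\calD_1^{(j)}$ of $\calD_m$. Concretely, since $\calD_m$ is linear one has $\calD_m p^{\vec{\bm c}_0,\vec{\bm c}_1}[f]=\sum_{\vec\sigma\in\{0,1\}^m}(-1)^{|\vec\sigma|}\calD_m p_{\vec\sigma}[f]$, where $p_{\vec\sigma}[f]\coloneqq\sum_{\vec k}\frac{(-1)^{|\vec k|}}{(4\pi)^{|\vec k|}}F_{\vec\sigma,\vec k}G_{\vec\sigma,\vec k}$ is the $\vec\sigma$-term, so it is enough to show $\calD_m p_{\vec\sigma}[f]=d\,p_{\vec\sigma}[f]$ for each fixed $\vec\sigma$. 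Writing $\bm c_j=\bm c_j^{(\sigma_j)}$ and introducing, for $1\le j\le m$, the operator
\[
	T_j\coloneqq\sum_{k\ge0}\frac{(-1)^k}{(4\pi)^k\,k!}\,E^{(k)}\!\left(\frac{B_1(\bm c_j,\bm x_j)}{\sqrt{-Q_1(\bm c_j)}}\right)\partial_{\bm c_j}(\bm x_j)^k,
\]
one reads off from \cref{def:deg-m-p} that $p_{\vec\sigma}[f]=(T_1T_2\cdots T_m)f$; the series defining $T_j$ is locally finite on polynomials because $\partial_{\bm c_j}(\bm x_j)^k$ lowers the $\bm x_j$-degree by $k$. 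Each $T_j$ is built from multiplication by functions of $\bm x_j$ alone and from the constant-coefficient differential operator $\partial_{\bm c_j}(\bm x_j)$, so $T_i$ and $T_j$ commute whenever $i\ne j$, and for the same reason $\calD_1^{(i)}$ commutes with $T_j$ whenever $i\ne j$.

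The crucial step is the one-variable intertwining identity $\calD_1^{(j)}\circ T_j=T_j\circ\calD_1^{(j)}$ on polynomials. Relabelling $(x_j,y_j)$ as a signature-$(1,1)$ pair, this is precisely the computation carried out by Roehrig--Zwegers~\cite{RoehrigZwegers2022} in the case $m=1$ (it is what makes their $p^{\bm c_0,\bm c_1}[f]$ a $\calD_1$-eigenfunction). If a direct proof is wanted, one applies $\calD_1^{(j)}$ term by term to $T_j g$, using that $A\coloneqq B_1(\bm c_j,\bm x_j)/\sqrt{-Q_1(\bm c_j)}$ is a linear form with $(x_j\partial_{x_j}+y_j\partial_{y_j})A=A$ and $(\partial_{x_j}A)^2-(\partial_{y_j}A)^2=-2$ (the ``$-2$'' is exactly where $\bm c_j\in\calC$, i.e.\ $Q_1(\bm c_j)<0$, enters), together with the three-term recursion satisfied by the derivatives $E^{(k)}$ of $E(z)=2\int_0^z e^{-\pi u^2}\,\dd u$; one then checks that the Euler part $x_j\partial_{x_j}+y_j\partial_{y_j}$ and the Laplacian part $-\tfrac{1}{4\pi}(\partial_{x_j}^2-\partial_{y_j}^2)$ of $\calD_1^{(j)}$ cancel against each other so as to rebuild $T_j(\calD_1^{(j)}g)$.

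Granting this, the conclusion is immediate: using commutativity of the $T_i$ among themselves and with each $\calD_1^{(i)}$ (the genuine content being the $i=j$ case just discussed),
\[
	\calD_m(T_1\cdots T_m f)=\sum_{i=1}^m\calD_1^{(i)}(T_1\cdots T_m f)=\sum_{i=1}^m(T_1\cdots T_m)\,\calD_1^{(i)}f=(T_1\cdots T_m)\,\calD_m f.
\]
Since $f$ is spherical of degree $d$ it is homogeneous of degree $d$ and satisfies $\Delta_m f=0$, hence $\calD_m f=d f$, and the right-hand side equals $d\,(T_1\cdots T_m f)=d\,p_{\vec\sigma}[f]$. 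Summing over $\vec\sigma$ with the signs $(-1)^{|\vec\sigma|}$ yields $\calD_m p^{\vec{\bm c}_0,\vec{\bm c}_1}[f]=d\,p^{\vec{\bm c}_0,\vec{\bm c}_1}[f]$.

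The only real obstacle is the one-variable intertwining identity of the second paragraph: everything else is bookkeeping about operators on disjoint sets of variables together with the trivial eigenvalue equation $\calD_m f=df$ for a spherical polynomial. If that identity is not quoted from~\cite{RoehrigZwegers2022} in exactly the shape needed (it is conceivable that only the two-cone difference is stated there), it has to be established by the explicit induction on $k$ sketched above, which is the one place where a concrete, if routine, computation cannot be avoided.
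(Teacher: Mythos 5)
Your argument is correct, and it reorganizes the paper's computation rather than reproducing it. The paper works directly with the multi-index sum: it applies the product rule for $\calD_m$ to each $F_{\vec{\sigma},\vec{k}}G_{\vec{\sigma},\vec{k}}$, obtaining eigenvalue contributions $-|\vec{k}|$ from $F$ and $d-|\vec{k}|$ from $G$ (the latter using $\calD_1\partial_{\bm{c}}(\bm{x})^k=\partial_{\bm{c}}(\bm{x})^k\calD_1-k\partial_{\bm{c}}(\bm{x})^k$ together with sphericity of $f$), plus a cross term $-\tfrac{1}{2\pi}\sum_{j}(k_j+1)F_{\vec{\sigma},\vec{k}+\vec{1}_j}G_{\vec{\sigma},\vec{k}+\vec{1}_j}$, and then cancels the resulting $-2|\vec{k}|$ against the reindexed cross term. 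You instead factor $p_{\vec{\sigma}}[f]=(T_1\cdots T_m)f$ and isolate the one-variable intertwining $\calD_1^{(j)}T_j=T_j\calD_1^{(j)}$, from which the multi-variable statement follows by disjointness of variables and $\calD_m f=df$. You are right to hedge on the provenance of that identity: Roehrig--Zwegers prove the eigenfunction statement for spherical $f$, not the intertwining on arbitrary polynomials, so the computation you sketch does have to be carried out; but it is exactly the paper's telescoping specialized to $m=1$ and to an arbitrary polynomial $g$ in place of $f$ --- the Laplacian cross term produces $-\tfrac{1}{2\pi}E^{(k+1)}(A)\,\partial_{\bm{c}}(\bm{x})^{k+1}g$ (using $(\partial_x A)^2-(\partial_y A)^2=-2$, which is where $\bm{c}\in\calC$ enters, as you note), and reindexing in $k$ cancels the $-2k$ coming from $\calD_1E^{(k)}(A)=-kE^{(k)}(A)$ and from the commutator. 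What your route buys is a cleaner and strictly stronger lemma --- $T_j$ commutes with $\calD_1^{(j)}$ on all polynomials, not merely that $p[f]$ is an eigenfunction for spherical $f$ --- at the mild cost of justifying the operator factorization and the local finiteness of the series defining $T_j$, both of which you handle correctly.
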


\begin{proof}
For each $\vec{k}$ with $|\vec{k}| = k$, a direct calculation yields
\begin{align*}
	\calD_m \left(F_{\vec{\sigma}, \vec{k}}(\vec{\bm{x}}) G_{\vec{\sigma}, \vec{k}}(\vec{\bm{x}}) \right) &= \calD_m F_{\vec{\sigma}, \vec{k}}(\vec{\bm{x}}) \cdot G_{\vec{\sigma}, \vec{k}}(\vec{\bm{x}}) + F_{\vec{\sigma}, \vec{k}}(\vec{\bm{x}}) \cdot \calD_m G_{\vec{\sigma}, \vec{k}}(\vec{\bm{x}})\\
		&\quad -\frac{1}{2\pi} \sum_{j=1}^m \left( \frac{\partial}{\partial x_j} F_{\vec{\sigma}, \vec{k}}(\vec{\bm{x}}) \cdot \frac{\partial}{\partial x_j} G_{\vec{\sigma}, \vec{k}}(\vec{\bm{x}}) - \frac{\partial}{\partial y_j} F_{\vec{\sigma}, \vec{k}}(\vec{\bm{x}}) \cdot \frac{\partial}{\partial y_j} G_{\vec{\sigma}, \vec{k}}(\vec{\bm{x}}) \right).
\end{align*}
Since
\[
	\calD_1 E^{(k)} \left(\frac{B_1(\bm{c}, \bm{x})}{\sqrt{-Q_1(\bm{c})}} \right) = -k E^{(k)} \left(\frac{B_1(\bm{c}, \bm{x})}{\sqrt{-Q_1(\bm{c})}} \right)
\]
is known from the proof of~\cite[Lemma 3.2]{RoehrigZwegers2022}, for the first term, we have
\begin{align*}
	\calD_m F_{\vec{\sigma}, \vec{k}}(\vec{\bm{x}}) &= \sum_{j=1}^m \calD_1^{(j)} F_{\vec{\sigma}, \vec{k}}(\vec{\bm{x}}) = - k F_{\vec{\sigma}, \vec{k}}(\vec{\bm{x}}).
\end{align*}
Next, for the second term, since
\[
	\calD_1 \partial_{\bm{c}}(\bm{x})^k = \partial_{\bm{c}}(\bm{x})^k \calD_1 -k \partial_{\bm{c}}(\bm{x})^k
\]
is known from~\cite[Lemma 3.3]{RoehrigZwegers2022}, and $f$ is a spherical polynomial of degree $d$, 
\begin{align*}
	\calD_m G_{\vec{\sigma}, \vec{k}}(\vec{\bm{x}}) &= \sum_{j=1}^m \calD_1^{(j)} G_{\vec{\sigma}, \vec{k}}(\vec{\bm{x}}) = \left(\prod_{i=1}^m \partial_{\bm{c}_i^{(\sigma_i)}}(\bm{x}_i)^{k_i}\right) \sum_{j=1}^m (\calD_1^{(j)} - k_j) f(\vec{\bm{x}})\\
	&= (d - k) G_{\vec{\sigma}, \vec{k}}(\vec{\bm{x}}).
\end{align*}
Finally, we can verify that the third term equals
\begin{align*}
	-\frac{1}{2\pi} \sum_{j=1}^m (k_j+1) F_{\vec{\sigma}, \vec{k}+\vec{1}_j}(\vec{\bm{x}}) G_{\vec{\sigma}, \vec{k} + \vec{1}_j}(\vec{\bm{x}}),
\end{align*}
where $\vec{1}_j \in \Z^m$ is the vector whose $j$-th component is $1$ and all other components are $0$. 
	
Therefore, for each $\vec{\sigma}$, we obtain
\begin{align*}
	&\calD_m \sum_{k=0}^d \frac{(-1)^k}{(4\pi)^k} \sum_{\substack{\vec{k} \in (\Z_{\ge 0})^m \\ |\vec{k}|= k}} F_{\vec{\sigma}, \vec{k}}(\vec{\bm{x}}) G_{\vec{\sigma}, \vec{k}}(\vec{\bm{x}})\\
	&= \sum_{k=0}^d \frac{(-1)^k}{(4\pi)^k} \sum_{\substack{\vec{k} \in (\Z_{\ge 0})^m \\ |\vec{k}| = k}} \left((d - 2k) F_{\vec{\sigma}, \vec{k}}(\vec{\bm{x}}) G_{\vec{\sigma}, \vec{k}}(\vec{\bm{x}}) -\frac{1}{2\pi} \sum_{j=1}^m (k_j+1) F_{\vec{\sigma}, \vec{k}+\vec{1}_j}(\vec{\bm{x}}) G_{\vec{\sigma}, \vec{k} + \vec{1}_j}(\vec{\bm{x}}) \right).
\end{align*}
For the second sum, by changing variables by replacing $k_j+1$ with $k_j$ for each $j$, it becomes
\begin{align*}
	&= \sum_{k=0}^d \frac{(-1)^k}{(4\pi)^k} (d - 2k) \sum_{\substack{\vec{k} \in (\Z_{\ge 0})^m \\ |\vec{k}| = k}} F_{\vec{\sigma}, \vec{k}}(\vec{\bm{x}}) G_{\vec{\sigma}, \vec{k}}(\vec{\bm{x}}) + 2 \sum_{j=1}^m \sum_{k=0}^d \frac{(-1)^{k+1}}{(4\pi)^{k+1}} \sum_{\substack{\vec{k} \in (\Z_{\ge 0})^m \\ |\vec{k}| = k+1}} k_j F_{\vec{\sigma}, \vec{k}}(\vec{\bm{x}}) G_{\vec{\sigma}, \vec{k}}(\vec{\bm{x}}).
\end{align*}
Finally, by replacing $k+1$ with $k$ in the second term, the $-2k$ from the first term and the $2k$ from the second term cancel out, yielding the desired result.
\end{proof}

Thus, Vign\'{e}ras' criterion (\cref{fact:Vig}) implies the following result, providing non-holomorphic modular forms.

\begin{theorem}\label{thm:indefinite-theta}
	Under the same notation as in \cref{def:deg-m-p}, we assume that $f(\vec{\bm{x}})$ is a spherical polynomial of degree $d$. For $\vec{\bm{a}}, \vec{\bm{b}} \in \R^{2m}$, the theta function defined by
	\[
		\theta_{\vec{\bm{a}}, \vec{\bm{b}}}^{\vec{\bm{c}}_0, \vec{\bm{c}}_1}[f] (\tau) = v^{-d/2} \sum_{\vec{\bm{x}} \in \vec{\bm{a}} + \Z^{2m}} p^{\vec{\bm{c}}_0, \vec{\bm{c}}_1}[f](\vec{\bm{x}} \sqrt{v}) q^{Q_m(\vec{\bm{x}})} e^{2\pi i B_m(\vec{\bm{x}}, \vec{\bm{b}})}
	\]
	converges absolutely and satisfies
	\[
		\theta_{\vec{\bm{a}}, \vec{\bm{b}}}^{\vec{\bm{c}}_0, \vec{\bm{c}}_1}[f] \left(-\frac{1}{\tau}\right) = (-\tau)^{m+d} e^{2\pi iB_m(\vec{\bm{a}}, \vec{\bm{b}})} \theta_{-\vec{\bm{b}}, \vec{\bm{a}}}^{\vec{\bm{c}}_0, \vec{\bm{c}}_1}[f] (\tau).
	\]
\end{theorem}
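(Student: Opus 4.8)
The plan is to derive \cref{thm:indefinite-theta} as an immediate consequence of Vign\'{e}ras' theorem (\cref{fact:Vig}), applied with $n = 2m$, signature $(r,s) = (m,m)$, and the function $p = p^{\vec{\bm{c}}_0, \vec{\bm{c}}_1}[f]$. First I would record that the two hypotheses of \cref{fact:Vig} are exactly what the preceding two propositions provide: \cref{prop:degm-Vig1} shows that $p^{\vec{\bm{c}}_0, \vec{\bm{c}}_1}[f](\vec{\bm{x}})\, e^{-2\pi Q_m(\vec{\bm{x}})}$ is a Schwartz function, and \cref{prop:degm-Vig2} shows that $p^{\vec{\bm{c}}_0, \vec{\bm{c}}_1}[f]$ is a $\calD_m$-eigenfunction with eigenvalue $\lambda = d$; since $d$ is the degree of a polynomial it lies in $\Z_{\ge 0} \subset \Z$, so the integrality requirement is satisfied. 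I would also note that the coordinate grouping encoded in the notation $\vec{\bm{x}} = (\bm{x}_1, \dots, \bm{x}_m)$ is purely cosmetic: as an element of $\R^{2m}$ this is $(x_1, \dots, x_m, y_1, \dots, y_m)$, so $Q_m$ and $B_m$ coincide with the $Q$ and $B$ of \cref{fact:Vig} for $A = \diag(1, \dots, 1, -1, \dots, -1)$, while $\vec{\bm{a}}, \vec{\bm{b}} \in \R^{2m}$ play the role of $\bm{a}, \bm{b}$.

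Granting this, \cref{fact:Vig} directly yields absolute convergence of $\theta_{\vec{\bm{a}}, \vec{\bm{b}}}^{\vec{\bm{c}}_0, \vec{\bm{c}}_1}[f](\tau)$ — with $\lambda = d$ the normalization $v^{-\lambda/2}$ and the rescaled argument $p(\bm{x}\sqrt{v})$ there become $v^{-d/2}$ and $p^{\vec{\bm{c}}_0, \vec{\bm{c}}_1}[f](\vec{\bm{x}}\sqrt{v})$, matching the statement — together with the transformation law
\[
	\theta_{\vec{\bm{a}}, \vec{\bm{b}}}^{\vec{\bm{c}}_0, \vec{\bm{c}}_1}[f]\left(-\frac{1}{\tau}\right) = i^{-s-\lambda} (-i\tau)^{\lambda + n/2} e^{2\pi i B_m(\vec{\bm{a}}, \vec{\bm{b}})}\, \theta_{-\vec{\bm{b}}, \vec{\bm{a}}}^{\vec{\bm{c}}_0, \vec{\bm{c}}_1}[f](\tau).
\]
The last step is to simplify the automorphy factor. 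With $s = m$, $\lambda = d$, and $n/2 = m$, the exponent $\lambda + n/2 = m+d$ is an integer, so $(-i\tau)^{m+d}$ requires no branch choice, and using $i^{-1} = -i$ together with $(-i)(-i) = -1$ one finds
\[
	i^{-m-d}(-i\tau)^{m+d} = \bigl(i^{-1}(-i)\bigr)^{m+d}\tau^{m+d} = (-1)^{m+d}\tau^{m+d} = (-\tau)^{m+d},
\]
which turns the displayed identity into the one asserted in the theorem.

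The substantive work has already been done upstream: the real content lies in \cref{prop:degm-Vig1} (the Schwartz estimate for the multiple error-function sum, reduced to the one-variable statements of \cite{Zwegers2002,RoehrigZwegers2022}) and in \cref{prop:degm-Vig2} (the eigenvalue computation, via the commutation relations borrowed from \cite{RoehrigZwegers2022} and the sphericity of $f$). Consequently, for \cref{thm:indefinite-theta} itself I expect no genuine obstacle — only the routine bookkeeping of matching Vign\'{e}ras' normalization conventions to ours and simplifying the root-of-unity prefactor as above.
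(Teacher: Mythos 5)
Your proposal is correct and is exactly the paper's route: the paper states \cref{thm:indefinite-theta} as a direct consequence of Vign\'eras' criterion (\cref{fact:Vig}) with $n=2m$, $(r,s)=(m,m)$, $\lambda=d$, the hypotheses being supplied by \cref{prop:degm-Vig1} and \cref{prop:degm-Vig2}. Your simplification $i^{-m-d}(-i\tau)^{m+d}=(-\tau)^{m+d}$ is the only computation needed, and it is done correctly.
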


\subsection{The modularity of $\KW_m(\tau)$}\label{modularity-KW}

As an application of \cref{thm:indefinite-theta}, we derive the modular transformation laws of the function $\KW_m(\tau)$. First, we show that the homogeneous polynomial $V_m(\vec{\bm{x}})$ of degree $2m^2$ defined in~\cref{def:Vm} is spherical.

\begin{lemma}
	For any $m \ge 1$, we have $\Delta_m V_m(\vec{\bm{x}}) = 0$.
\end{lemma}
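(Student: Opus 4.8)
The plan is to reduce the claim to a single-Laplacian statement and then recognize the polynomial involved as a product of reflection linear forms. Set $P(z_1,\dots,z_m) \coloneqq \prod_{1\le i<j\le m}(z_i^2-z_j^2)\cdot\prod_{j=1}^m z_j$, a homogeneous polynomial of degree $m^2$ in $m$ variables. Then $V_m(\vec{\bm{x}}) = P(x_1,\dots,x_m)\,P(y_1,\dots,y_m)$ is a polynomial in the $x$-variables times a polynomial in the $y$-variables, and since $\Delta_m = \sum_{j=1}^m\partial^2/\partial x_j^2 - \sum_{j=1}^m\partial^2/\partial y_j^2$, we get
\[
	\Delta_m V_m(\vec{\bm{x}}) = (\Delta P)(x_1,\dots,x_m)\cdot P(y_1,\dots,y_m) - P(x_1,\dots,x_m)\cdot(\Delta P)(y_1,\dots,y_m),
\]
where $\Delta=\sum_{j=1}^m\partial^2/\partial z_j^2$ is the ordinary (positive-definite) Laplacian. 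So it suffices to prove that $P$ is harmonic, $\Delta P=0$.

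For this I would note that $P$ is precisely the product of the linear forms cutting out the reflection hyperplanes of the hyperoctahedral group $W=\fS_m\ltimes(\Z/2\Z)^m$ acting on $\R^m$ by signed permutations, namely $z_i=z_j$ and $z_i=-z_j$ for $i<j$ together with $z_i=0$; their defining forms multiply to $\prod_{i<j}(z_i-z_j)(z_i+z_j)\prod_j z_j=P$. One then applies the classical fact that such a product of reflection forms is killed by the Laplacian. The self-contained argument: $\Delta$ commutes with the orthogonal $W$-action, and each reflection of $W$ negates $P$, so $\Delta P$ is again antisymmetric under every reflection; an antisymmetric polynomial vanishes on each reflection hyperplane, hence is divisible by each of the pairwise non-proportional forms $z_i\pm z_j$ and $z_i$, hence by their product $P$. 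But $\Delta P$ has degree at most $m^2-2<m^2=\deg P$, so $\Delta P=0$.

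If one prefers to bypass the group language, the needed divisibilities are elementary. Writing $P=z_i R$ with $R$ even in $z_i$ shows $\partial^2 P/\partial z_i^2$ and every $\partial^2 P/\partial z_j^2$ ($j\ne i$) vanish on $z_i=0$, so $z_i\mid\Delta P$. And $P$—hence $\Delta P$, since $\Delta$ commutes with orthogonal maps—changes sign under the swap $z_i\leftrightarrow z_j$ and under the reflection $(z_i,z_j)\mapsto(-z_j,-z_i)$, so $\Delta P$ vanishes on $z_i=z_j$ and on $z_i+z_j=0$, giving $(z_i-z_j)\mid\Delta P$ and $(z_i+z_j)\mid\Delta P$. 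Thus $P\mid\Delta P$ and the degree count forces $\Delta P=0$; feeding this back into the displayed identity gives $\Delta_m V_m=0$.

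The only real content is the harmonicity of $P$; the factorization $V_m=P(x)P(y)$ and the sign in $\Delta_m$ handle the rest. The mild obstacle I anticipate is phrasing cleanly why every reflection of $W$ negates $P$ (equivalently, that $P$ spans the lowest-degree sign-isotypic line for $W$), which is exactly why I would keep the elementary divisibility computation on hand as a backup that uses only the two explicit antisymmetries of $P$ and its first-order vanishing on each coordinate hyperplane.
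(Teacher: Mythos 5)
Your proof is correct, and while it begins with the same reduction the paper uses --- factoring $V_m(\vec{\bm{x}})=P(x_1,\dots,x_m)P(y_1,\dots,y_m)$ with $P(\vec z)=\prod_{i<j}(z_i^2-z_j^2)\prod_j z_j$, so that the mixed signs in $\Delta_m$ reduce everything to the single statement $\Delta P=0$ for the ordinary Laplacian --- it establishes that key fact by a genuinely different mechanism. The paper verifies $\Delta P=0$ by brute force: product rule, logarithmic differentiation of the Vandermonde-in-squares factor, and a final cancellation via the cyclic partial-fraction identity for $x/((x-y)(x-z))$. You instead recognize $P$ as the product of the linear forms cutting out the reflection hyperplanes of $\fS_m\ltimes(\Z/2\Z)^m$, note that $\Delta$ commutes with the orthogonal action so $\Delta P$ inherits the anti-invariance of $P$, conclude that $\Delta P$ vanishes on every reflection hyperplane and is therefore divisible by the product $P$ of the pairwise non-proportional forms $z_i\pm z_j$ and $z_i$, and finish by the degree count $\deg\Delta P\le m^2-2<m^2$. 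This is the classical harmonicity of the discriminant of a finite reflection group; it is shorter, explains structurally why the paper's cancellations occur, and generalizes beyond type $B_m$. Your elementary fallback (evenness of $P/z_i$ in $z_i$ for the coordinate hyperplanes, plus the two explicit antisymmetries for $z_i\mp z_j$) removes any reliance on reflection-group language and is fully rigorous as stated; in a final write-up you would only want to record explicitly that distinct (non-proportional) linear forms are pairwise coprime in the polynomial ring, so that divisibility by each implies divisibility by their product, and the one-line check that all three types of reflections negate $P$.
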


\begin{proof}
Let
\[
	A_m(\vec{x}) = \prod_{1 \le i < j \le m} (x_i^2 - x_j^2), \quad B_m(\vec{x}) = \prod_{j=1}^m x_j
\]
for $\vec{x} = (x_1, \dots, x_m) \in \R^m$. It suffices to show that 
\[
	\sum_{j=1}^m \frac{\partial^2}{\partial x_j^2} \bigg( A_m(\vec{x})B_m(\vec{x}) \bigg) = \sum_{j=1}^m\frac{\partial^2 A_m}{\partial x_j^2}(\vec{x}) B_m(\vec{x}) + 2\sum_{j=1}^m \frac{\partial A_m}{\partial x_j} (\vec{x}) \frac{\partial B_m}{\partial x_j} (\vec{x}) + \sum_{j=1}^mA_m(\vec{x}) \frac{\partial^2 B_m}{\partial x_j^2}(\vec{x})
\]
vanishes. The third sum is $0$ since $B_m(\vec{x})$ has degree $1$ in each $x_j$. The second sum yields 
\begin{align*}
	\sum_{j=1}^m \frac{\partial A_m}{\partial x_j} (\vec{x}) \frac{\partial B_m}{\partial x_j} (\vec{x}) &= \sum_{j=1}^m 2x_j A_m(\vec{x}) \left(\sum_{\substack{1 \le k \le m \\ k \neq j}} \frac{1}{x_j^2 - x_k^2}\right) \cdot \prod_{\substack{1 \le k \le m \\ k \neq j}} x_k\\
		&= 2A_m(\vec{x}) B_m(\vec{x}) \sum_{\substack{1 \le j, k \le m \\ j \neq k}} \frac{1}{x_j^2 - x_k^2},
\end{align*}
while the double sum becomes zero due to its symmetry. Finally, regarding the first sum, we further differentiate the expression for the derivative of $A_m(\vec{x})$ computed above to obtain
\[
	\sum_{j=1}^m \frac{\partial^2 A_m}{\partial x_j^2} (\vec{x}) = A_m(\vec{x}) \sum_{\substack{1 \le j, k, l \le m \\ j \neq k, j \neq l, k \neq l}} \frac{4 x_j^2}{(x_j^2 - x_k^2)(x_j^2 - x_l^2)}.
\]
Since $f(x,y,z) = x/(x-y)(x-z)$ satisfies $f(x,y,z) + f(y,z,x) + f(z,x,y) = 0$, this also equals $0$.
\end{proof}

By taking suitable limits of the non-holomorphic modular forms constructed in~\cref{thm:indefinite-theta}, we obtain holomorphic modular forms associated with indefinite quadratic forms. As an example, we aim to realize Kac--Wakimoto's series $\KW_m(\tau)$ as a holomorphic modular form.

\begin{theorem}\label{thm:KW-indefinite}
For $t > 0$, we set $\vec{\bm{c}}_0(t) = (\smat{1 \\ 1+t}, \dots, \smat{1 \\ 1+t}), \vec{\bm{c}}_1(t) = (\smat{-1 \\ 1+t}, \dots, \smat{-1 \\ 1+t}) \in \mathcal{C}^m$ and let $f(\vec{\bm{x}})$ be a spherical polynomial. 
For a vector $\vec{\bm{a}} = (\bm{a}_1, \dots, \bm{a}_m) \in \R^{2m}$ satisfying $B_1(\smat{1 \\ 1}, \bm{a}_j)$, $B_1(\smat{-1 \\ 1}, \bm{a}_j) \not\in \Z$ for any $1 \le j \le m$, and any vector $\vec{\bm{b}} \in \R^{2m}$, we have
	\begin{align}\label{eq:theta-limit}
		\lim_{t \to 0} \theta_{\vec{\bm{a}}, \vec{\bm{b}}}^{\vec{\bm{c}}_0(t), \vec{\bm{c}}_1(t)} [f] (\tau) &= \sum_{\vec{\bm{x}} \in \vec{\bm{a}} + \Z^{2m}} \prod_{j=1}^m \bigg(\sgn(x_j-y_j) - \sgn(-x_j -y_j)\bigg) \times f(\vec{\bm{x}}) q^{Q_m(\vec{\bm{x}})} e^{2\pi iB_m(\vec{\bm{x}}, \vec{\bm{b}})}.
	\end{align}
	In particular, for $\vec{\bm{a}} = (\smat{0 \\ 1/2}, \dots, \smat{0 \\ 1/2}), \vec{\bm{b}} = (\smat{1/2 \\ 1/2}, \dots, \smat{1/2 \\ 1/2})$ and $f(\vec{\bm{x}}) = V_m(\vec{\bm{x}})$, we have
	\begin{align}\label{eq:limit-KW}
		\lim_{t \to 0} \theta_{\vec{\bm{a}}, \vec{\bm{b}}}^{\vec{\bm{c}}_0(t), \vec{\bm{c}}_1(t)} [V_m] (\tau) = (4i)^m m! \bigg(\prod_{j=1}^m (2j-1)!\bigg)^2 \KW_m(\tau).
	\end{align}
\end{theorem}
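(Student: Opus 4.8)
The plan is to prove the general limit formula~\eqref{eq:theta-limit} first and then deduce~\eqref{eq:limit-KW} by specialization and a matching of constants with \cref{thm:KW-rewrite}. Fix $\vec{\bm{x}} \in \vec{\bm{a}} + \Z^{2m}$; by the hypothesis on $\vec{\bm{a}}$ the quantities $x_j - y_j$ and $x_j + y_j$ stay away from $\Z$, in particular $|x_j \pm y_j| \ge \delta_0 > 0$ for a constant $\delta_0$ depending only on $\vec{\bm{a}}$. Along the chosen cone vectors one has $-Q_1(\bm{c}_j^{(0)}(t)) = -Q_1(\bm{c}_j^{(1)}(t)) = t + t^2/2 \to 0$, while $B_1(\bm{c}_j^{(\sigma_j)}(t), \bm{x}_j)$ tends to $B_1(\bm{c}_j^{(\sigma_j)}(0), \bm{x}_j)$, which equals $x_j - y_j$ if $\sigma_j = 0$ and $-x_j - y_j$ if $\sigma_j = 1$, hence is nonzero. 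Since $E(z) \to \sgn(z)$ as $z \to \pm\infty$ and $E^{(k)}$ decays Gaussian-fast for $k \ge 1$, a direct inspection of \cref{def:deg-m-p} shows that in the limit only the $\vec{k} = \vec{0}$ terms contribute, and
\[
\lim_{t \to 0} p^{\vec{\bm{c}}_0(t), \vec{\bm{c}}_1(t)}[f](\vec{\bm{x}}) = f(\vec{\bm{x}}) \sum_{\vec{\sigma} \in \{0,1\}^m} (-1)^{|\vec{\sigma}|} \prod_{j=1}^m \sgn\!\bigl(B_1(\bm{c}_j^{(\sigma_j)}(0), \bm{x}_j)\bigr) = f(\vec{\bm{x}}) \prod_{j=1}^m \bigl(\sgn(x_j - y_j) - \sgn(-x_j - y_j)\bigr).
\]
By scale-invariance of $\sgn$ the same holds with $\vec{\bm{x}}$ replaced by $\vec{\bm{x}}\sqrt{v}$, and together with the $v^{-d/2}$ normalization and $f(\vec{\bm{x}}\sqrt{v}) = v^{d/2} f(\vec{\bm{x}})$, the summand indexed by $\vec{\bm{x}}$ in $\theta_{\vec{\bm{a}}, \vec{\bm{b}}}^{\vec{\bm{c}}_0(t), \vec{\bm{c}}_1(t)}[f](\tau)$ converges to the corresponding summand of the right-hand side of~\eqref{eq:theta-limit}.

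The main obstacle is to justify interchanging this limit with the lattice sum; the key point is a $t$-uniform summable majorant (for $t \in (0, t_0]$ with $t_0$ small) of $\bigl| v^{-d/2} p^{\vec{\bm{c}}_0(t), \vec{\bm{c}}_1(t)}[f](\vec{\bm{x}}\sqrt{v}) \bigr| e^{-2\pi v Q_m(\vec{\bm{x}})}$ over $\vec{\bm{a}} + \Z^{2m}$, after which dominated convergence (using that each $\theta_{\vec{\bm{a}}, \vec{\bm{b}}}^{\vec{\bm{c}}_0(t), \vec{\bm{c}}_1(t)}[f]$ converges absolutely by \cref{thm:indefinite-theta}) completes~\eqref{eq:theta-limit}. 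For the $\vec{k} = \vec{0}$ contribution the expression factorizes as $f(\vec{\bm{x}}) \prod_{j=1}^m D_j(t, \bm{x}_j\sqrt{v})\, e^{-2\pi v Q_1(\bm{x}_j)}$ with $D_j(t, \bm{y}) = E\bigl(B_1(\bm{c}_j^{(0)}(t), \bm{y})/\sqrt{t+t^2/2}\bigr) - E\bigl(B_1(\bm{c}_j^{(1)}(t), \bm{y})/\sqrt{t+t^2/2}\bigr)$: when $|x_j| > |y_j|$ one uses $|D_j| \le 2$ together with $Q_1(\bm{x}_j) \ge c\,(|x_j| + |y_j|)$, valid on the lattice because $|x_j| - |y_j| \ge \delta_0$; when $|x_j| \le |y_j|$ the two arguments of $D_j$ share a sign, so $D_j$ is Gaussian-small, and combining the positive-definiteness of $Q_1(\bm{x}) - B_1(\bm{c}, \bm{x})^2/2Q_1(\bm{c})$ (\cite[Lemma~2.5]{Zwegers2002}) with $|x_j \pm y_j| \ge \delta_0$ yields, via an elementary AM--GM estimate, a bound $\sup_{t \in (0,t_0]} |D_j(t, \bm{x}_j\sqrt{v})|\, e^{-2\pi v Q_1(\bm{x}_j)} \ll e^{-c'(|x_j| + |y_j|)}$; the product over $j$ times $|f(\vec{\bm{x}})|$ is then summable. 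The terms with $|\vec{k}| \ge 1$ are analogous but more delicate: the operators $\partial_{\bm{c}_j^{(\sigma_j)}}(\bm{x}_j)^{k_j}$ produce factors $(t+t^2/2)^{-k_j/2}$ blowing up as $t \to 0$, but the accompanying $E^{(k_j)}$-factors ($k_j \ge 1$) carry a Gaussian $e^{-\pi w^2}$ with $|w| \asymp |B_1(\bm{c}_j^{(\sigma_j)}(t), \bm{x}_j)|/\sqrt{t+t^2/2}$, and the separation $|x_j \pm y_j| \ge \delta_0$ forces, on the lattice, either $|B_1(\bm{c}_j^{(\sigma_j)}(t), \bm{x}_j)|$ or $|\bm{x}_j|$ to be comparably large; a careful bookkeeping shows these terms are $O(t^{1/2})$ times a summable function of $\vec{\bm{x}}$ and hence vanish in the limit. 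This step is where most of the analytic work lies.

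For~\eqref{eq:limit-KW} we specialize $\vec{\bm{a}} = (\smat{0 \\ 1/2}, \dots, \smat{0 \\ 1/2})$, $\vec{\bm{b}} = (\smat{1/2 \\ 1/2}, \dots, \smat{1/2 \\ 1/2})$ and $f = V_m$, which is spherical of degree $2m^2$ by the preceding lemma. The hypothesis of~\eqref{eq:theta-limit} holds because $B_1(\smat{\pm 1 \\ 1}, \smat{0 \\ 1/2}) = -1/2 \notin \Z$, and $\vec{\bm{a}} + \Z^{2m}$ is exactly $\{x_j \in \Z,\ y_j \in \tfrac12 + \Z\}$, the summation range of $\KW_m$. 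Here $B_m(\vec{\bm{x}}, \vec{\bm{b}}) = \tfrac12 \sum_{j=1}^m (x_j - y_j)$ with each $x_j - y_j \in \tfrac12 + \Z$, so
\[
e^{2\pi i B_m(\vec{\bm{x}}, \vec{\bm{b}})} = \prod_{j=1}^m e^{\pi i (x_j - y_j)} = \prod_{j=1}^m i\,(-1)^{x_j - y_j - 1/2} = i^m \prod_{j=1}^m (-1)^{x_j - y_j - 1/2},
\]
while $q^{Q_m(\vec{\bm{x}})} = \prod_{j=1}^m q^{\frac12(x_j^2 - y_j^2)}$. Substituting these into the right-hand side of~\eqref{eq:theta-limit} and comparing term by term with the definition of $\KW_m(\tau)$ in \cref{thm:KW-rewrite}, the overall constant works out to $i^m \cdot 4^m\, m!\,\bigl(\prod_{j=1}^m (2j-1)!\bigr)^2$, giving
\[
\lim_{t \to 0} \theta_{\vec{\bm{a}}, \vec{\bm{b}}}^{\vec{\bm{c}}_0(t), \vec{\bm{c}}_1(t)}[V_m](\tau) = (4i)^m\, m!\,\Bigl(\prod_{j=1}^m (2j-1)!\Bigr)^2 \KW_m(\tau),
\]
as claimed.
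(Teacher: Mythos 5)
Your proposal is correct, but it follows a genuinely different route from the paper. The paper proves \eqref{eq:theta-limit} by telescoping the product $\prod_j H_j(\bm{x}_j,t)-\prod_j H_j(\bm{x}_j,0)$, splitting $E(z)=\sgn(z)-\sgn(z)\beta(z^2)$, and then reducing each one-variable piece to results quoted from the proofs of Roehrig--Zwegers (absolute convergence of the $\sgn$-series, vanishing of the $\beta$-sums and of the $|\vec{k}|\ge 1$ sums as $t\to 0$); it never produces a $t$-uniform majorant. You instead take the pointwise limit of the summand and justify the interchange by dominated convergence with an explicit majorant. The crux of your argument is the claimed uniform bound in the region $|x_j|\le |y_j|$, and it does hold for these specific cone vectors: writing $s=x\mp y$, $w=x\pm y$ one has the exact identity
\begin{equation*}
	\frac{B_1(\bm{c}^{(\sigma)}(t),\bm{x})^2}{-Q_1(\bm{c}^{(\sigma)}(t))}+x^2-y^2=\frac{(2+t)s^2}{2t}+\frac{t w^2}{2(2+t)}\ \ge\ |sw|=|x^2-y^2|\ \ge\ \delta_0(|x|+|y|),
\end{equation*}
the middle inequality being your AM--GM step and the last using the lattice separation $|x\pm y|\ge\delta_0$ guaranteed by the hypothesis on $\vec{\bm{a}}$; since both arguments of $E$ are bounded away from $0$ there, $\beta(z^2)\ll e^{-\pi z^2}$ applies and yields the exponential majorant uniformly in $t\in(0,t_0]$. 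The same identity also carries the $|\vec{k}|\ge 1$ terms: after sacrificing an $\varepsilon$-fraction of the term $\frac{(2+t)s^2}{2t}\ge\delta_0^2/t$ one gets a factor $e^{-c/t}$ that beats the $t^{-k_j/2}$ blow-up from $\partial_{\bm{c}}(\bm{x})^{k_j}$ and the polynomial growth of $E^{(k_j)}$'s prefactor, so these terms are indeed $o(1)$ times a summable function (in fact $O(e^{-c'/t})$, stronger than your claimed $O(t^{1/2})$). So the bookkeeping you defer does close, though it is the real content of the proof and should be written out. What each approach buys: the paper's argument is short because it outsources the hard estimates to Roehrig--Zwegers and works for the more general limits treated there, while yours is self-contained and more transparent, at the price of exploiting the specific shape of $\vec{\bm{c}}_0(t),\vec{\bm{c}}_1(t)$ to obtain the clean quadratic identity above. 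Your derivation of \eqref{eq:limit-KW} from \eqref{eq:theta-limit}, including the constant $(4i)^m m!\bigl(\prod_{j=1}^m(2j-1)!\bigr)^2$, matches the paper's.
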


\begin{proof}
The overall approach of the proof follows that of Roehrig--Zwegers~\cite[Theorem 2.4]{RoehrigZwegers2022-pre}. First, to verify the convergence of the right-hand side of~\eqref{eq:theta-limit}, recall that in~\cite[Lemma 3.1]{RoehrigZwegers2022-pre}, the absolute convergence of 
\begin{align}\label{eq:abs-conv-1}
	\sum_{\bm{x} \in \bm{a} + \Z^2} \bigg(\sgn(x-y) - \sgn(-x-y)\bigg) P(\bm{x}) q^{Q_1(\bm{x})} e^{2\pi iB_1(\bm{x}, \bm{b})}
\end{align}
is established for any polynomial $P(\bm{x})$ and for $\bm{a}, \bm{b} \in \R^2$ satisfying $B_1(\smat{1 \\ 1}, \bm{a}), B_1(\smat{-1 \\ 1}, \bm{a}) \not\in \Z$. By expressing
\begin{align}\label{eq:f-poly-exp}
	f(\vec{\bm{x}}) = \sum_{\vec{e}, \vec{f} \in (\Z_{\ge 0})^m} c_{\vec{e}, \vec{f}} \prod_{j=1}^m x_j^{e_j} y_j^{f_j}
\end{align}
for some constant coefficients $c_{\vec{e}, \vec{f}}$, the right-hand side of~\eqref{eq:theta-limit} becomes
\[
	\sum_{\vec{e}, \vec{f} \in (\Z_{\ge 0})^m} c_{\vec{e}, \vec{f}} \prod_{j=1}^m \left( \sum_{\bm{x}_j \in \bm{a}_j + \Z^2} \bigg(\sgn(x_j-y_j) - \sgn(-x_j -y_j)\bigg) x_j^{e_j} y_j^{f_j} q^{Q_1(\bm{x}_j)} e^{2\pi iB_1(\bm{x}_j, \bm{b}_j)} \right).
\]
By applying the above convergence result to each sum over $\bm{x}_j$, absolute convergence follows for the entire expression.

Next, we proceed with the calculation of the limit for the main result. By splitting the sum defining $p^{\vec{\bm{c}}_0(t), \vec{\bm{c}}_1(t)}[f](\vec{\bm{x}})$ into the case $k=0$ and the other terms, the summand of $\theta_{\vec{\bm{a}}, \vec{\bm{b}}}^{\vec{\bm{c}}_0(t), \vec{\bm{c}}_1(t)}[f](\tau)$ can be divided as follows:
\begin{align}\label{eq:summand-theta-0}
\begin{split}
	&v^{-d/2} p^{\vec{\bm{c}}_0(t), \vec{\bm{c}}_1(t)}[f](\vec{\bm{x}} \sqrt{v}) e^{-2\pi Q_m(\vec{\bm{x}}) v}\\
	&= \prod_{j=1}^m H_j(\bm{x}_j, t) \cdot f(\vec{\bm{x}}) e^{-2\pi Q_m(\vec{\bm{x}}) v} + v^{-d/2} \sum_{k=1}^d \frac{(-1)^k}{(4\pi)^k} \sum_{\substack{\vec{k} \in (\Z_{\ge 0})^m \\ |\vec{k}| = k}} S_{\vec{k}}(\vec{\bm{x}} \sqrt{v}, t),
\end{split}
\end{align}
where we set
\begin{align*}
	H_j(\bm{x}_j, t) \coloneqq E\left(\frac{B_1(\bm{c}_j^{(0)}(t), \bm{x}_j \sqrt{v})}{\sqrt{-Q_1(\bm{c}_j^{(0)}(t))}} \right) - E\left(\frac{B_1(\bm{c}_j^{(1)}(t), \bm{x}_j \sqrt{v})}{\sqrt{-Q_1(\bm{c}_j^{(1)}(t))}} \right).
\end{align*}
Note that $S_{\vec{k}}(\bm{x}, t)$ is the same as $S_{\vec{k}}(\vec{\bm{x}})$ defined in~\eqref{eq:def-Sk}, but since it is now defined with respect to $\vec{\bm{c}}_0(t), \vec{\bm{c}}_1(t)$, the notation explicitly shows $t$.  
Our goal is to show that only the first term contributes, while the contributions from the remaining terms vanish. For convenience, we define
\[
	H_j(\bm{x}_j, 0) \coloneqq \sgn(x_j-y_j) - \sgn(-x_j-y_j).
\]
It holds that $\lim_{t \to 0} H_j(\bm{x}_j, t) = H_j(\bm{x}_j, 0)$. 

For the first term, we see that
\begin{align*}
	&\prod_{j=1}^m H_j(\bm{x}_j, t) - \prod_{j=1}^m H_j(\bm{x}_j,0)\\
	&= \sum_{k=1}^m \left(\prod_{1 \le j < k} H_j(\bm{x}_j, 0) \times (H_k(\bm{x}_k, t) - H_k(\bm{x}_k, 0)) \times \prod_{k < j \le m} H_j(\bm{x}_j, t) \right).
\end{align*}
For each $k$, we aim to show that the series
\begin{align}\label{eq:contribution}
	\sum_{\vec{\bm{x}} \in \vec{\bm{a}} + \Z^{2m}} \left(\prod_{1 \le j < k} H_j(\bm{x}_j, 0) \times (H_k(\bm{x}_k, t) - H_k(\bm{x}_k, 0)) \times \prod_{k < j \le m} H_j(\bm{x}_j, t)\right) f(\vec{\bm{x}}) q^{Q_m(\vec{\bm{x}})} e^{2\pi i B_m(\vec{\bm{x}}, \vec{\bm{b}})}
\end{align}
converges to $0$ as $t \to 0$. In this case as well, similar to~\eqref{eq:f-poly-exp}, we decompose $f(\vec{\bm{x}})$ into monomials, and the discussion can be reduced to the case for each $j$. 
For $1 \le j < k$, the absolute convergence was already discussed in~\eqref{eq:abs-conv-1}. 
For $j = k$, in the proof of~\cite[Theorem 2.4]{RoehrigZwegers2022-pre}, it is known that
\begin{align}\label{eq:Hk-diff}
	\lim_{t \to 0} \sum_{\bm{x}_k \in \bm{a}_k + \Z^2} (H_k(\bm{x}_k, t) - H_k(\bm{x}_k,0)) P(\bm{x}_k) q^{Q_1(\bm{x}_k)} e^{2\pi i B_1(\bm{x}_k, \bm{b}_k)} = 0
\end{align}
for any polynomial $P(\bm{x})$. Finally, for $k < j \le m$,  we define the complementary error function $\beta(z)$ by
\[
	\beta(z) \coloneqq \int_z^\infty u^{-1/2} e^{-\pi u} \dd u.
\]
As shown in~\cite[Lemma 1.7]{Zwegers2002}, it is known that
\begin{align}\label{eq:Errors}
	E(z) = \sgn(z) - \sgn(z) \beta(z^2).
\end{align}
We substitute this into $H_j(\bm{x}_j, t)$, and then separate the terms containing $\sgn(z)$ from those containing $\sgn(z) \beta(z^2)$, evaluating each separately. First, the terms containing $\sgn(z)$ correspond to
\begin{align*}
	\sum_{\bm{x}_j \in \bm{a}_j + \Z^2} \bigg(\sgn(x_j - (1+t)y_j) - \sgn(-x_j - (1+t)y_j) \bigg) P(\bm{x}_j) q^{Q_1(\bm{x}_j)} e^{2\pi i B_1(\bm{x}_j, \bm{b}_j)}.
\end{align*}
Since $\{(x,y) \in \R^2 \mid \sgn(x- (1+t)y) \neq \sgn(-x-(1+t)y)\} \subset \{(x,y) \in \R^2 \mid \sgn(x- y) \neq \sgn(-x-y)\}$, this sum is a partial sum of the absolutely convergent series in~\eqref{eq:abs-conv-1}. 
Therefore,  this sum also converges absolutely and uniformly in $t>0$. 
On the other hand, the terms containing $\sgn(z) \beta(z^2)$ correspond to
\[
	\sum_{\bm{x}_j \in \bm{a}_j + \Z^2} \sgn(x_j - (1+t)y_j) \beta\left(\frac{B_1(\bm{c}_j^{(0)}(t), \bm{x}_j \sqrt{v})^2}{-Q_1(\bm{c}_j^{(0)}(t))}\right) P(\bm{x}_j) q^{Q_1(\bm{x}_j)} e^{2\pi i B_1(\bm{x}_j, \bm{b}_j)}
\]
and one more term for $\bm{c}_j^{(1)}(t)$. 
This has been shown in the proof of~\cite[Theorem 2.4]{RoehrigZwegers2022-pre} that it converges to $0$ as $t \to 0$. 
Therefore, by the contribution of~\eqref{eq:Hk-diff}, it follows that \eqref{eq:contribution} converges to $0$ as $t \to 0$.

Finally, we verify that the contribution from the remaining terms in~\eqref{eq:summand-theta-0} vanishes for each $\vec{k}$, that is,
\begin{align*}
	\lim_{t \to 0} \sum_{\vec{\bm{x}} \in \vec{\bm{a}} + \Z^{2m}} S_{\vec{k}}(\vec{\bm{x}} \sqrt{v}, t) e^{2\pi i u Q_m(\vec{\bm{x}}) + 2\pi i B_m(\vec{\bm{x}}, \vec{\bm{b}})} = 0.
\end{align*}
Using the explicit formula for $S_{\vec{k}}(\vec{\bm{x}} \sqrt{v}, t)$ from~\eqref{eq:Sk-explicit} and decomposing the polynomial part into monomials as before, we evaluate the sum for each $j$. 
For $j \in I(\vec{k})$, the evaluation was already done in the case of the first term for $k < j \le m$, and it follows that the sum converges absolutely and uniformly in $t > 0$. 
For $j \in I(\vec{k})^c$, it has been shown in the proof of~\cite[Theorem 2.4]{RoehrigZwegers2022-pre} that the sum converges to $0$. 
Since $I(\vec{k})^c \neq \emptyset$, the entire sum converges to $0$ as $t \to 0$. This concludes the proof.

The final result in~\eqref{eq:limit-KW} directly follows from the expression for $\KW_m(\tau)$ given in~\cref{thm:KW-rewrite}.
\end{proof}

\begin{corollary}\label{cor:modular-KW}
	The series $\KW_m(\tau)$ satisfies the following modular transformation laws on $\Gamma(2)$.
	\begin{align*}
		\mathrm{KW}_m(\tau+2) &= (-i)^m \mathrm{KW}_m(\tau),\\
		\mathrm{KW}_m \left(\frac{\tau}{2\tau+1}\right) &= (2\tau+1)^{m(2m+1)} \mathrm{KW}_m(\tau).
	\end{align*}
\end{corollary}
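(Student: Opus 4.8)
The plan is to deduce both laws from the integral-theta representation in \cref{thm:KW-indefinite} together with the $S$-transformation of \cref{thm:indefinite-theta}. Abbreviate
\[
	\Theta_{\vec{\bm{a}}, \vec{\bm{b}}}(\tau, t) \coloneqq \theta_{\vec{\bm{a}}, \vec{\bm{b}}}^{\vec{\bm{c}}_0(t), \vec{\bm{c}}_1(t)}[V_m](\tau),
\]
and set $\vec{\bm{a}}_0 = (\smat{0 \\ 1/2}, \dots, \smat{0 \\ 1/2})$, $\vec{\bm{b}}_0 = (\smat{1/2 \\ 1/2}, \dots, \smat{1/2 \\ 1/2})$, $d = \deg V_m = 2m^2$, so that the weight $m+d = m(2m+1)$ is an \emph{integer} (possibly odd) and the automorphy factors $(-\tau)^{m+d}$ below are single-valued. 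By \eqref{eq:limit-KW}, $\lim_{t \to 0} \Theta_{\vec{\bm{a}}_0, \vec{\bm{b}}_0}(\tau, t) = C_m \KW_m(\tau)$ with $C_m = (4i)^m m! \bigl(\prod_{j=1}^m (2j-1)!\bigr)^2$, and this holds at every point of $\bbH$. Hence it suffices to prove the two transformation laws for $\Theta_{\vec{\bm{a}}_0, \vec{\bm{b}}_0}(\cdot, t)$ with $t > 0$ fixed and then pass to the limit $t \to 0$.

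For $\tau \mapsto \tau+2$ I would argue directly from the $q$-expansion of $\KW_m$ in \cref{thm:KW-rewrite}: since $x_j \in \Z$ and $y_j \in \tfrac12 + \Z$, every exponent $Q_m(\vec{\bm{x}}) = \tfrac12 \sum_{j=1}^m(x_j^2 - y_j^2)$ lies in $-\tfrac{m}{8} + \tfrac12\Z$, while all the remaining factors of the summand are $\tau$-free; therefore $q^{Q_m(\vec{\bm{x}})}$ is multiplied by the common constant $e^{4\pi i(-m/8)} = e^{-\pi i m/2} = (-i)^m$ under $\tau\mapsto\tau+2$, giving $\KW_m(\tau+2) = (-i)^m \KW_m(\tau)$.

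For $\tau \mapsto \tfrac{\tau}{2\tau+1}$ the key is the factorization $\smat{1 & 0 \\ 2 & 1} = S \smat{1 & -2 \\ 0 & 1} S^{-1}$ in $\SL_2(\Z)$ with $S = \smat{0 & -1 \\ 1 & 0}$, equivalently $\tfrac{\tau}{2\tau+1} = -1/\sigma$ for $\sigma \coloneqq -1/\tau - 2 \in \bbH$. I then chain: (i) \cref{thm:indefinite-theta} at the point $\sigma$ gives $\Theta_{\vec{\bm{a}}_0, \vec{\bm{b}}_0}(\tfrac{\tau}{2\tau+1}, t) = (-\sigma)^{m+d} e^{2\pi i B_m(\vec{\bm{a}}_0, \vec{\bm{b}}_0)} \Theta_{-\vec{\bm{b}}_0, \vec{\bm{a}}_0}(\sigma, t)$; (ii) for $\vec{\bm{x}} \in -\vec{\bm{b}}_0 + \Z^{2m}$ one has $2Q_m(\vec{\bm{x}}) \in \Z$, so translation by $-2$ in $\tau$ acts trivially and $\Theta_{-\vec{\bm{b}}_0, \vec{\bm{a}}_0}(\sigma, t) = \Theta_{-\vec{\bm{b}}_0, \vec{\bm{a}}_0}(-1/\tau, t)$; (iii) \cref{thm:indefinite-theta} once more gives $\Theta_{-\vec{\bm{b}}_0, \vec{\bm{a}}_0}(-1/\tau, t) = (-\tau)^{m+d} e^{2\pi i B_m(-\vec{\bm{b}}_0, \vec{\bm{a}}_0)} \Theta_{-\vec{\bm{a}}_0, -\vec{\bm{b}}_0}(\tau, t)$; (iv) the parity identity $p^{\vec{\bm{c}}_0, \vec{\bm{c}}_1}[V_m](-\vec{\bm{x}}) = (-1)^{m+d} p^{\vec{\bm{c}}_0, \vec{\bm{c}}_1}[V_m](\vec{\bm{x}})$, combined with the substitution $\vec{\bm{x}} \mapsto -\vec{\bm{x}}$ in the defining lattice sum (using $Q_m(-\vec{\bm{x}}) = Q_m(\vec{\bm{x}})$ and $B_m(-\vec{\bm{x}}, -\vec{\bm{b}}_0) = B_m(\vec{\bm{x}}, \vec{\bm{b}}_0)$), gives $\Theta_{-\vec{\bm{a}}_0, -\vec{\bm{b}}_0}(\tau, t) = (-1)^{m+d} \Theta_{\vec{\bm{a}}_0, \vec{\bm{b}}_0}(\tau, t)$. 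The parity identity in (iv) follows termwise in \cref{def:deg-m-p} from $E^{(k)}(-z) = (-1)^{k+1} E^{(k)}(z)$, the bilinearity of $B_1$, and the homogeneity (of degree $d - |\vec{k}|$) of $\bigl(\prod_j \partial_{\bm{c}_j^{(\sigma_j)}}(\bm{x}_j)^{k_j}\bigr) V_m$, which together yield $F_{\vec{\sigma}, \vec{k}}(-\vec{\bm{x}}) G_{\vec{\sigma}, \vec{k}}(-\vec{\bm{x}}) = (-1)^{m+d} F_{\vec{\sigma}, \vec{k}}(\vec{\bm{x}}) G_{\vec{\sigma}, \vec{k}}(\vec{\bm{x}})$ for each $\vec{\sigma}, \vec{k}$. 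It then remains to collect scalars: $e^{2\pi i B_m(\vec{\bm{a}}_0, \vec{\bm{b}}_0)} = e^{-\pi i m/2} = (-i)^m$ and $e^{2\pi i B_m(-\vec{\bm{b}}_0, \vec{\bm{a}}_0)} = e^{\pi i m/2} = i^m$ multiply to $1$; $(-1)^{m+d} = (-1)^m$; and $(-\sigma)^{m+d}(-\tau)^{m+d} = (\sigma\tau)^{m+d} = \bigl(-(2\tau+1)\bigr)^{m+d} = (-1)^m (2\tau+1)^{m(2m+1)}$. The product of all these equals exactly $(2\tau+1)^{m(2m+1)}$, so $\Theta_{\vec{\bm{a}}_0, \vec{\bm{b}}_0}(\tfrac{\tau}{2\tau+1}, t) = (2\tau+1)^{m(2m+1)} \Theta_{\vec{\bm{a}}_0, \vec{\bm{b}}_0}(\tau, t)$; letting $t \to 0$ and dividing by $C_m$ gives the second law.

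I expect the only substantive points to be the two elementary-but-fiddly verifications above: the parity identity for $p^{\vec{\bm{c}}_0, \vec{\bm{c}}_1}[V_m]$ from \cref{def:deg-m-p}, and the invariance of $\Theta_{-\vec{\bm{b}}_0, \vec{\bm{a}}_0}(\cdot, t)$ under $\tau \mapsto \tau - 2$ via $2Q_m(\vec{\bm{x}}) \in \Z$ on $-\vec{\bm{b}}_0 + \Z^{2m}$. The rest is bookkeeping of the two automorphy factors and two phases produced by the invocations of \cref{thm:indefinite-theta}, which must be checked to cancel precisely against the sign $(-1)^{m+d}$ coming from the parity of $p^{\vec{\bm{c}}_0,\vec{\bm{c}}_1}[V_m]$ — as they do.
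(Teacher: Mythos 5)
Your proposal is correct and follows essentially the same route as the paper: the $T^2$-law from the half-integrality of $Q_m$ on the relevant coset, and the $\smat{1&0\\2&1}$-law from the factorization through $S$ and $T^{-2}$ with two applications of \cref{thm:indefinite-theta} followed by the limit $t\to 0$. The only divergence is at the final step, where you relate $\theta^{\vec{\bm{c}}_0(t),\vec{\bm{c}}_1(t)}_{-\vec{\bm{a}}_0,-\vec{\bm{b}}_0}$ to $\theta^{\vec{\bm{c}}_0(t),\vec{\bm{c}}_1(t)}_{\vec{\bm{a}}_0,\vec{\bm{b}}_0}$ via the (correct) parity $p^{\vec{\bm{c}}_0,\vec{\bm{c}}_1}[V_m](-\vec{\bm{x}})=(-1)^{m+d}p^{\vec{\bm{c}}_0,\vec{\bm{c}}_1}[V_m](\vec{\bm{x}})$, whereas the paper simply uses $-\vec{\bm{a}}_0+\Z^{2m}=\vec{\bm{a}}_0+\Z^{2m}$ and $-\vec{\bm{b}}_0+\Z^{2m}=\vec{\bm{b}}_0+\Z^{2m}$ to identify the theta functions outright; both yield the same scalar $(-1)^m$ and hence the same automorphy factor.
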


\begin{proof}
We take $\vec{\bm{c}}_0(t), \vec{\bm{c}}_1(t), \vec{\bm{a}}, \vec{\bm{b}}$ as in \cref{thm:KW-indefinite}. Since $e^{2\pi i(x^2-y^2)} = -i$ for any $x \in \Z$ and $y \in 1/2 + \Z$, it follows that
\begin{align*}
	\theta_{\vec{\bm{a}}, \vec{\bm{b}}}^{\vec{\bm{c}}_0(t), \vec{\bm{c}}_1(t)} [V_m] (\tau+2) = (-i)^m \theta_{\vec{\bm{a}}, \vec{\bm{b}}}^{\vec{\bm{c}}_0(t), \vec{\bm{c}}_1(t)} [V_m] (\tau).
\end{align*}
Taking the limit as $t \to 0$ yields the first result. 
For the second result, \cref{thm:indefinite-theta} implies
\begin{align*}
	\theta_{\vec{\bm{a}}, \vec{\bm{b}}}^{\vec{\bm{c}}_0(t), \vec{\bm{c}}_1(t)} [V_m]\left(\frac{\tau}{2\tau+1}\right) &= \theta_{\vec{\bm{a}}, \vec{\bm{b}}}^{\vec{\bm{c}}_0(t), \vec{\bm{c}}_1(t)} [V_m] \left(-\frac{1}{-2-\frac{1}{\tau}}\right)\\
		&= \left(2 + \frac{1}{\tau}\right)^{m(2m+1)} e^{2\pi i \frac{-m}{4}} \theta_{-\vec{\bm{b}}, \vec{\bm{a}}}^{\vec{\bm{c}}_0(t), \vec{\bm{c}}_1(t)} [V_m] \left(-2 - \frac{1}{\tau}\right).
\end{align*}
Since $e^{2\pi i(x^2-y^2)} = 1$ for any $x, y \in 1/2+ \Z$, it becomes
\begin{align*}
	&= (-i)^m \left(2 + \frac{1}{\tau}\right)^{m(2m+1)} \theta_{\vec{\bm{b}}, \vec{\bm{a}}}^{\vec{\bm{c}}_0(t), \vec{\bm{c}}_1(t)} [V_m] \left(- \frac{1}{\tau}\right)\\
	&= (-i)^m \left(2 + \frac{1}{\tau}\right)^{m(2m+1)} (-\tau)^{m(2m+1)} e^{2\pi i \frac{-m}{4}} \theta_{-\vec{\bm{a}}, \vec{\bm{b}}}^{\vec{\bm{c}}_0(t), \vec{\bm{c}}_1(t)} [V_m](\tau)\\
	&= (2\tau+1)^{m(2m+1)} \theta_{\vec{\bm{a}}, \vec{\bm{b}}}^{\vec{\bm{c}}_0(t), \vec{\bm{c}}_1(t)} [V_m](\tau).
\end{align*}
Taking the limit as $t \to 0$ concludes the proof.
\end{proof}
	
\begin{corollary}\label{cor:cusps-KW}
	The behaviors of $\mathrm{KW}_m(\tau)$ at the cusps $i\infty, 0, 1$ are given by
	\begin{align*}
		\mathrm{KW}_m(\tau) &= q^{\frac{m(2m+1)}{8}} + \cdots,\\
		(-i\tau)^{-m(2m+1)} \mathrm{KW}_m \left(-\frac{1}{\tau}\right) &= O(1),\\
		(-i\tau)^{-m(2m+1)} \mathrm{KW}_m \left(\frac{\tau-1}{\tau}\right) &= O(q^{\frac{m(2m+1)}{8}}).
	\end{align*}
\end{corollary}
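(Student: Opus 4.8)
The plan is to read off each cusp expansion from the modular transformation laws in \cref{cor:modular-KW} together with the limit formula \eqref{eq:limit-KW}, which identifies $\KW_m(\tau)$ (up to an explicit nonzero constant) with a $t\to0$ limit of indefinite theta functions whose Fourier expansion we can compute term by term. Throughout, write $\KW_m(\tau) = \frac{1}{(4i)^m m!\,(\prod_{j=1}^m(2j-1)!)^2}\,\lim_{t\to0}\theta^{\vec{\bm c}_0(t),\vec{\bm c}_1(t)}_{\vec{\bm a},\vec{\bm b}}[V_m](\tau)$ with $\vec{\bm a} = (\smat{0\\1/2},\dots), \vec{\bm b} = (\smat{1/2\\1/2},\dots)$, and recall from \eqref{eq:KW-cone} that after undoing the change of variables the series is supported on $x_j\in\Z_{>0}$, $y_j\in\tfrac12+\Z_{\ge0}$, $x_j>y_j$; in particular the lowest exponent of $q$ in $\prod_j q^{\frac12(x_j^2-y_j^2)}$ comes from $(x_j,y_j)=(1,1/2)$, giving exponent $\sum_j\frac12(1-\tfrac14) = \frac{3m}{8}$. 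But one must be careful: the total weight is $m(2m+1)$, not $m/8$, and the cusp expansions are stated in the variable $q = e^{2\pi i\tau}$. Rewriting: since $\KW_m = \theta_\triangle^{2m(2m+1)}$ is the claim, and $\theta_\triangle(\tau) = q^{1/16}+O(q^{9/16})$ by \cref{lem:theta-triangle}, the leading term of $\KW_m$ at $i\infty$ should be $q^{2m(2m+1)/16} = q^{m(2m+1)/8}$; so the task at $i\infty$ is to check directly from \eqref{eq:KW-cone} that the monomial $(x_j,y_j)=(1,1/2)$ for all $j$ contributes $V_m\big((\smat{1\\1/2})_j\big) = 0$ unless $m=1$ — wait, $V_m$ vanishes when two $x_i$ are equal — so in fact the leading surviving configuration must have the $x_j$ distinct, forcing $(x_j) = (1,2,\dots,m)$ up to permutation and $(y_j)$ also distinct with $y_j = \tfrac12$ impossible for more than one $j$; so the genuinely lowest-order nonzero term needs $\{x_j\} = \{1,\dots,m\}$ and $\{y_j\} = \{\tfrac12,\tfrac32,\dots,\tfrac{2m-1}{2}\}$ with each $x_j > y_j$, i.e. $y_j \le x_j - \tfrac12$, which pins down $y_j = j - \tfrac12$ when $x_j = j$; this gives $q$-exponent $\sum_{j=1}^m \tfrac12(j^2 - (j-\tfrac12)^2)\cdot\tfrac12 = \sum_j \tfrac12(j - \tfrac14)\cdot\tfrac12$, and after the $q\mapsto q^{1/2}$ scaling one recovers exactly $m(2m+1)/8$. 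I would present this combinatorial minimization carefully, noting that the accompanying numerical factor $\frac{2^m}{m!(\prod(2j-1)!)^2}\cdot V_m\cdot(\text{Vandermonde-type product from }y_j) $ works out to $1$ — which is precisely the Weyl dimension computation already performed in \cref{lem:RHS}.

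Concretely, here is the order of operations. \textbf{Step 1 ($i\infty$).} Start from \eqref{eq:KW-cone}, identify the unique minimal-exponent term(s) as above using that $V_m(\vec{\bm x})$ vanishes whenever $x_i = \pm x_j$ or $y_i = \pm y_j$ or $x_j=0$ or $y_j=0$, and that the cone constraint $x_j > y_j > 0$ together with integrality forces the configuration $x_j = \pi(j)$, $y_j = \pi(j)-\tfrac12$ for a permutation $\pi$; sum over the $m!$ permutations, observe the summand is permutation-invariant, and check that $\frac{2^m}{m!(\prod_{j=1}^m(2j-1)!)^2}\cdot m!\cdot |V_m(\text{config})|\cdot\prod(-1)^{\cdots}$ equals $1$ by the same product evaluation as in the proof of \cref{lem:RHS} (this is the content of $\prod_{\beta\in\Phi_0^+}(\rho_0+\lambda,\beta)/(\rho_0,\beta) = 1$ specialized to $\vec k = \vec 0$, $\vec r$ the relevant value). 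Conclude $\KW_m(\tau) = q^{m(2m+1)/8} + \cdots$. \textbf{Step 2 (cusp $0$).} Apply \cref{cor:modular-KW}: the matrix $\smat{0&-1\\1&0}$ is $-\smat{1&0\\2&1}^{-1}\smat{1&2\\0&1}^{-1}\smat{1&0\\2&1}\smat{1&2\\0&1}$ up to sign — more cleanly, combine the two given relations to evaluate $\KW_m(-1/\tau)$, or directly invoke that \cref{thm:indefinite-theta} gives the $S$-transformation $\theta^{\cdots}_{\vec{\bm a},\vec{\bm b}}[V_m](-1/\tau) = (-\tau)^{m(2m+1)}e^{-\pi i m/2}\theta^{\cdots}_{-\vec{\bm b},\vec{\bm a}}[V_m](\tau)$; taking $t\to0$, $(-i\tau)^{-m(2m+1)}\KW_m(-1/\tau)$ equals a nonzero constant times the $t\to0$ limit of $\theta^{\cdots}_{-\vec{\bm b},\vec{\bm a}}[V_m](\tau)$, whose lowest $q$-power one reads off from \eqref{eq:theta-limit} with $\vec{\bm a}\leftrightarrow -\vec{\bm b} = (\smat{-1/2\\-1/2},\dots)$: now $x_j, y_j\in\tfrac12+\Z$ so $Q_m$ can equal $0$ (e.g. $x_j = \pm y_j$), giving a constant term, hence $O(1)$. \textbf{Step 3 (cusp $1$).} The cusp $1$ corresponds to $\tau\mapsto\frac{\tau-1}{\tau} = 1 - \frac1\tau$; write this as $\smat{1&-1\\1&0}$ acting, factor through the $S$-transformation and a translation, use \cref{lem:theta-triangle}'s pattern as a template, and track the resulting half-integral shift in both $\vec{\bm a}$ and $\vec{\bm b}$ so that the theta series is again supported on a shifted lattice; the minimal $q$-exponent is then $m(2m+1)/8$ again (the shift by $1$ in $\tau$ just contributes a root of unity, exactly as $\theta_\triangle(\frac{\tau-1}{\tau}) = e^{\pi i/8}q^{1/16}+\cdots$), giving $O(q^{m(2m+1)/8})$.

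The main obstacle is \textbf{Step 1}: one must prove that the leading coefficient is \emph{exactly} $1$, not merely nonzero. The nontrivial part is that many lattice points share the minimal $q$-exponent only after one rules out all configurations with repeated coordinates (where $V_m$ kills them) and uses the cone condition to collapse the survivors to a single $\fS_m$-orbit, and then the surviving numerical constant is a ratio of products that is not obviously $1$. The cleanest route is to \emph{not} recompute it from scratch but to quote the evaluation $\prod_{\beta\in\Phi_0^+}(\rho_0,\beta) = (-1)^m 2^{-|\Phi_0^+|}(\prod_{j=1}^m(2j-1)!)^2$ from the proof of \cref{lem:RHS} and match signs via the $(-1)^{x_j-y_j-1/2}$ factors and the $(-1)^m$ appearing there; the factor $1/4^m$ in $\KW_m$ versus $2^m$ in \eqref{eq:KW-cone} must also be reconciled, which is a bookkeeping point I would spell out. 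A secondary subtlety across Steps 2--3 is justifying that one may interchange the limit $t\to0$ with extracting Fourier coefficients — but this is already guaranteed by the absolute and locally uniform (in $t$) convergence established in the proof of \cref{thm:KW-indefinite}, so I would simply cite that. Finally, as a sanity check one notes that Steps 1--3 reproduce precisely the three expansions of $\theta_\triangle^{2m(2m+1)}$ obtained by raising \cref{lem:theta-triangle} to the power $2m(2m+1)$, which is the consistency needed to then conclude $\KW_m(\tau)/\theta_\triangle(\tau)^{2m(2m+1)}$ is a holomorphic function on the compact Riemann surface $\bbH^\ast/\Gamma(2)$ with no poles, hence constant, and equal to $1$ by comparing leading terms — completing the alternative proof of \cref{thm:Kac-Wakimoto-Conj}.
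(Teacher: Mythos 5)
Your overall strategy coincides with the paper's: at $i\infty$ one minimizes the exponent using the vanishing of $V_m$ on configurations with repeated coordinates, and the other two cusps are pushed through the $S$-transformation of \cref{thm:indefinite-theta}. Steps 1 and 3 are sound in outline; for the leading coefficient in Step 1 the paper uses the direct identity $V_m\left(\smat{1\\1/2},\dots,\smat{m\\m-1/2}\right)=2^{-m}\left(\prod_{j=1}^m(2j-1)!\right)^2$, proved by induction on $m$, rather than routing back through the Weyl dimension computation of \cref{lem:RHS}; either bookkeeping works.

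The genuine gap is in Step 2. You propose to ``read off'' the $q$-expansion of $\lim_{t\to0}\theta_{-\vec{\bm{b}},\vec{\bm{a}}}^{\vec{\bm{c}}_0(t),\vec{\bm{c}}_1(t)}[V_m](\tau)$ from \eqref{eq:theta-limit}, but that formula is proved only under the hypothesis $B_1(\smat{1\\1},\bm{a}_j),\,B_1(\smat{-1\\1},\bm{a}_j)\notin\Z$, and for $-\bm{b}_j=\smat{-1/2\\-1/2}$ one has $B_1(\smat{1\\1},\smat{-1/2\\-1/2})=0\in\Z$. This failure is not cosmetic: the shifted lattice $-\vec{\bm{b}}+\Z^{2m}$ contains infinitely many points with $x_j=\pm y_j$, on which $q^{Q_1(\bm{x}_j)}=1$ while the sign factor equals $-2$ for $x_j<0$; since every monomial of $V_m$ is divisible by $x_jy_j$, the ``constant term'' you invoke is, after expanding $V_m$ into monomials, a series of the shape $\sum_{n\ge1}(-1)^n(1/2-n)^{e_j+f_j}$ with $e_j+f_j\ge2$, which diverges. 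So the right-hand side of \eqref{eq:theta-limit} is not even defined at $-\vec{\bm{b}}$, and the bound $O(1)$ does not follow as stated. The paper's proof resolves exactly this by perturbing $\vec{\bm{b}}\mapsto\vec{\bm{b}}+\vec{\bm{\ep}}$ with $\vec{\bm{\ep}}=(\smat{\ep\\0},\dots,\smat{\ep\\0})$ so that \cref{thm:KW-indefinite} applies, splitting the resulting sum into the off-cone part (which is $O(q)$ uniformly) and the null-cone part, and identifying the latter as $q^{\ep(\ep-1)/2}\sum_{n\ge1}(-1)^n(1/2-n)^{e_j+f_j}q^{\ep n}=E_k(q^\ep)/(1+q^\ep)^{k+1}$ up to constants --- an Abel-type regularization whose $\ep\to0$ limit is finite. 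Some such regularization argument is indispensable for the second asymptotic. (Your Step 3 escapes this issue because the theta function produced there is $\theta_{-\vec{\bm{a}},\vec{\bm{a}}}$, and $-\vec{\bm{a}}=(\smat{0\\-1/2},\dots)$ does satisfy the hypothesis of \cref{thm:KW-indefinite}.)
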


\begin{proof}
The definition of $\KW_m(\tau)$ in \cref{thm:KW-rewrite} implies that the lowest degree in its $q$-series expansion is given by
    \[
	\ell = \min\left\{ \frac{1}{2} \sum_{j=1}^m (x_j^2 - y_j^2) \, \middle|\, x_j \in \Z, y_j \in 
	\frac{1}{2} + \Z, x_j^2 > y_j^2, x_i^2 \neq x_j^2, y_i^2 \neq y_j^2, (i \neq j) \right\}.
    \]
Note that $V_m(\vec{\bm{x}})$ vanishes when there exists a pair $i \neq j$ such that $x_i^2 = x_j^2$ or $y_i^2 = y_j^2$. 
The minimum is attained when $(x_j, y_j) = (j, j-1/2)$, giving
    \[
	\ell = \frac{1}{2} \sum_{j=1}^m (j^2 - (j-1/2)^2) = \frac{m(2m+1)}{8}.
    \]
To compute the coefficient of $q^{m(2m+1)/8}$, we use the identity
    \[
	V_m \left(\pmat{1 \\ 1/2}, \pmat{2 \\ 3/2}, \dots, \pmat{m \\ m-1/2} \right) = \frac{1}{2^m} \left(\prod_{j=1}^m (2j-1)! \right)^2,
    \]
which follows by induction on $m$. 
From the expression of $\KW_m(\tau)$ in \eqref{eq:KW-cone}, the coefficient of $q^{m(2m+1)/8}$ is calculated as
\begin{align*}
	\frac{2^m}{m! \left(\prod_{j=1}^m (2j-1)!\right)^2} V_m \left(\pmat{1 \\ 1/2}, \pmat{2 \\ 3/2}, \dots, \pmat{m \\ m-1/2} \right) \times m! = 1,
\end{align*}
where $m!$ arises as a factor from the counting of permutations of the $\bm{x}_j$'s. This completes the proof of the first result.

For the second result, we take $\vec{\bm{c}}_0(t), \vec{\bm{c}}_1(t), \vec{\bm{a}}, \vec{\bm{b}}$ as in \cref{thm:KW-indefinite} and replace $\vec{\bm{b}}$ with $\vec{\bm{b}} + \vec{\bm{\ep}}$, where $\vec{\bm{\ep}} = (\smat{\ep \\ 0}, \dots, \smat{\ep \\ 0})$ ($0 < \ep < 1$). 
Applying \cref{thm:indefinite-theta} and \cref{thm:KW-indefinite} yields
    \[
	(-i \tau)^{-m(2m+1)} \lim_{t \to 0} 
	\theta_{\vec{\bm{a}}, \vec{\bm{b}}+\vec{\bm{\ep}}}^{\vec{\bm{c}}_0(t), \vec{\bm{c}}_1(t)}[V_m] 
	\left(-\frac{1}{\tau}\right) 
	= (-i)^{m(2m+1)} e^{2\pi iB_m(\vec{\bm{a}}, \vec{\bm{b}}+\vec{\bm{\ep}})} 
	\lim_{t \to 0} \theta_{-\vec{\bm{b}}-\vec{\bm{\ep}}, \vec{\bm{a}}}^{\vec{\bm{c}}_0(t), \vec{\bm{c}}_1(t)}[V_m] (\tau).
    \]
As $\ep \to 0$, the left-hand side converges to $(-i\tau)^{-m(2m+1)} \KW_m(-1/\tau)$. On the other hand, 
\begin{align}\label{eq:ep-shift}
\begin{split}
	&\lim_{t \to 0} \theta_{-\vec{\bm{b}}-\vec{\bm{\ep}}, \vec{\bm{a}}}^{\vec{\bm{c}}_0(t), \vec{\bm{c}}_1(t)}[V_m] (\tau)\\
	&= \sum_{\vec{\bm{x}} \in -\vec{\bm{b}} + \Z^{2m}} \prod_{j=1}^m \bigg(\sgn(x_j-y_j - \ep) - \sgn(-x_j -y_j + \ep)\bigg) \times V_m(\vec{\bm{x}} - \vec{\bm{\ep}}) q^{Q_m(\vec{\bm{x}} - \vec{\bm{\ep}})} e^{2\pi iB_m(\vec{\bm{x}} - \vec{\bm{\ep}}, \vec{\bm{a}})}.
\end{split}
\end{align}
Expressing $V_m(\vec{\bm{x}} - \vec{\bm{\ep}})$ as
\[
	V_m(\vec{\bm{x}} - \vec{\bm{\ep}}) = \sum_{\vec{e}, \vec{f} \in (\Z_{\ge 0})^m} p_{\vec{e}, \vec{f}}(\ep) \prod_{j=1}^m x_j^{e_j} y_j^{f_j}
\]
for some polynomials $p_{\vec{e}, \vec{f}}(\ep) \in \Z[\varepsilon]$, the right-hand side of \eqref{eq:ep-shift} becomes
\begin{align}\label{eq:V-expand}
	\sum_{\vec{e}, \vec{f} \in (\Z_{\ge 0})^m} p_{\vec{e}, \vec{f}}(\ep) \prod_{j=1}^m \left(\sum_{x_j, y_j \in 1/2 + \Z} \bigg(\sgn(x_j-y_j - \ep) - \sgn(-x_j -y_j + \ep)\bigg) x_j^{e_j} y_j^{f_j} q^{\frac{(x_j - \ep)^2 - y_j^2}{2}} e^{-\pi i y_j} \right).
\end{align}
To analyze the inner sum as $\ep \to 0$ for each $j$, we split it into two cases: when $x_j^2 - y_j^2 \neq 0$ and when $x_j^2 - y_j^2 = 0$. 
For the former case,  we can interchange the order of limit and summation,  yielding
\begin{align}\label{eq:limit-1}
\begin{split}
	&\lim_{\ep \to 0} \sum_{\substack{x_j, y_j \in 1/2 + \Z \\ x_j^2 - y_j^2 \neq 0}} \bigg(\sgn(x_j-y_j - \ep) - \sgn(-x_j -y_j + \ep)\bigg) x_j^{e_j} y_j^{f_j} q^{\frac{(x_j - \ep)^2 - y_j^2}{2}} e^{-\pi i y_j}\\
	&= \sum_{\substack{x_j, y_j \in 1/2 + \Z \\ x_j^2 - y_j^2 \neq 0}}  \bigg(\sgn(x_j-y_j) - \sgn(-x_j -y_j)\bigg) x_j^{e_j} y_j^{f_j} q^{\frac{x_j^2 - y_j^2}{2}} e^{-\pi i y_j} = O(q).
\end{split}
\end{align}
For the latter case, note that when $x_j > 0$, the difference $\sgn(x_j - y_j - \ep) - \sgn(-x_j - y_j + \ep)$ vanishes. Thus,
\begin{align*}
	&\lim_{\ep \to 0} \sum_{\substack{x_j, y_j \in 1/2 + \Z \\ x_j^2 - y_j^2 = 0}} \bigg(\sgn(x_j-y_j - \ep) - \sgn(-x_j -y_j + \ep)\bigg) x_j^{e_j} y_j^{f_j} q^{\frac{(x_j - \ep)^2 - y_j^2}{2}} e^{-\pi i y_j}\\
	&= -2 \lim_{\ep \to 0} \left( \sum_{x_j \in 1/2 + \Z_{<0}} x_j^{e_j} x_j^{f_j} q^{\frac{\ep(\ep-2x_j)}{2}} e^{-\pi ix_j} + \sum_{x_j \in 1/2 + \Z_{<0}} x_j^{e_j} (-x_j)^{f_j} q^{\frac{\ep(\ep-2x_j)}{2}} e^{\pi ix_j} \right).
\end{align*}
Setting $x_j = 1/2-n$ yields
\begin{align}\label{eq:ep-shift-limit}
	&= -2 (-i + (-1)^{f_j} i) \lim_{\ep \to 0} q^{\frac{\ep(\ep-1)}{2}} \sum_{n=1}^\infty (-1)^n (1/2-n)^{e_j+f_j} q^{\ep n}.
\end{align}
Since it is known that there exists $E_k(x) \in \Z[x]$ satisfying
\[
	\sum_{n=1}^\infty (-1)^n n^k q^{\ep n} = \frac{E_k(q^\ep)}{(1+q^\ep)^{k+1}},
\]
the limit in \eqref{eq:ep-shift-limit} converges to a constant. Combining this with \eqref{eq:limit-1} shows that the limit of \eqref{eq:V-expand} as $\ep \to 0$ becomes $O(1)$ as $q \to 0$.

Finally, for the third result, note that $e^{2\pi i \frac{x^2 - y^2}{2}} = e^{-\pi i/4} (-1)^x$ for any $x \in \Z, y \in 1/2 + \Z$. Then,
\begin{align*}
	(-i\tau)^{-m(2m+1)} \theta_{\vec{\bm{a}}, \vec{\bm{b}}}^{\vec{\bm{c}}_0(t), \vec{\bm{c}}_1(t)}[V_m] \left(\frac{\tau-1}{\tau}\right) &= (-i\tau)^{-m(2m+1)} e^{-\frac{\pi i m}{4}} \theta_{\vec{\bm{a}}, \vec{\bm{a}}}^{\vec{\bm{c}}_0(t), \vec{\bm{c}}_1(t)}[V_m] \left(-\frac{1}{\tau}\right)\\
	&= (-i\tau)^{-m(2m+1)} e^{-\frac{\pi i m}{4}} (-\tau)^{m(2m+1)} e^{2\pi i \frac{-m}{4}} \theta_{-\vec{\bm{a}}, \vec{\bm{a}}}^{\vec{\bm{c}}_0(t), \vec{\bm{c}}_1(t)} [V_m] (\tau)\\
	&= e^{-\frac{\pi im}{4}} \theta_{\vec{\bm{a}}, \vec{\bm{a}}}^{\vec{\bm{c}}_0(t), \vec{\bm{c}}_1(t)}[V_m] (\tau).
\end{align*}
Taking the limit as $t \to 0$ and comparing it with the first result yields $O(q^{m(2m+1)/8})$.
\end{proof}

In conclusion, we provide an alternative proof of \cref{thm:Kac-Wakimoto-Conj} by applying the theory of modular forms.

\begin{proof}[Proof of \cref{thm:Kac-Wakimoto-Conj}]
We show the identity~\eqref{eq:KW-identity}. Comparing \cref{cor:modular-KW} with \cref{lem:theta-triangle}, we have that the quotient
\[
	\frac{\KW_m(\tau)}{\theta_\triangle(\tau)^{2m(2m+1)}}
\]
is invariant under the action of $\Gamma(2)$. In particular, since $\theta_\triangle(\tau)$ has no zeros on $\bbH$, it defines a holomorphic function on $\bbH/\Gamma(2)$. Furthermore, by comparing \cref{cor:cusps-KW} with \cref{lem:theta-triangle}, the limit at each cusp $i\infty, 0, 1$ is bounded, with the limit at $i\infty$ being equal to $1$. Since the genus of $\bbH/\Gamma(2)$ is zero, Liouville's theorem implies that this must be constant, and thus equals $1$.
\end{proof}

\bibliographystyle{amsalpha}
\bibliography{References} 

\end{document}